\numberwithin{equation}{section}
\newtheorem{theorem}{Theorem}[section]
\newtheorem{lemma}[theorem]{Lemma}
\newtheorem{corollary}[theorem]{Corollary}
\newtheorem{definition}[theorem]{Definition}
\newtheorem{remark}[theorem]{Remark}
\newtheorem{proposition}[theorem]{Proposition}
\makeatletter\setlength{\textwidth}{15.6cm}
\begin{document}

\author{Li-Jun Du and Wan-Tong Li\thanks{%
Corresponding author (wtli@lzu.edu.cn).}\\
%EndAName
School of Mathematics and Statistics, Lanzhou University,\\
Lanzhou, Gansu 730000, People's Republic of China
\\
Shi-Liang Wu\\
School of Mathematics and Statistics, Xidian University\\
Xi'an, Shaanxi 710071,  People's Republic of China}

\title{{Pulsating Fronts for a Bistable Lotka-Volterra Competition System with Advection in a Periodic Habitat}}

\date{ }
\maketitle

\begin{abstract}
This paper is concerned with the following Lotka-Volterra competition system with advection in a periodic habitat
\begin{equation*}
\begin{cases}
\frac{\partial u_1}{\partial t}
=d_1(x)\frac{\partial^2 u_1}{\partial x^2}-a_1(x)\frac{\partial u_1}{\partial x}+u_1\left(b_1(x)-a_{11}(x)u_1-a_{12}(x)u_2\right),\\
\frac{\partial u_2}{\partial t}
=d_2(x)\frac{\partial^2 u_2}{\partial x^2}-a_2(x)\frac{\partial u_2}{\partial x}+u_2\left(b_2(x)-a_{21}(x)u_1-a_{22}(x)u_2\right),
\end{cases}
t>0,~x\in\Bbb R,
\end{equation*}
where $d_i(\cdot)$, $a_i(\cdot)$, $b_i(\cdot)$, $a_{ij}(\cdot)$ $(i,j=1,2)$ are $L$-periodic functions
in $C^\nu(\Bbb{R})$ with some $\nu\in(0,1)$.
Under certain assumptions, the system admits two periodic locally stable steady states $(u_1^*(x),0)$ and $(0,u_2^*(x))$.
In this work, we first establish the existence of the pulsating front $\bm{U}(x,x+ct)=(U_1(x,x+ct),U_2(x,x+ct))$ connecting
two periodic solutions $(0,u_2^*(x))$ and $(u_1^*(x),0)$ at infinities.
By using a dynamical method, we confirm further that the pulsating front is asymptotically stable
for front-like initial values. As a consequence of the global asymptotically stability,
we finally show that the pulsating front is unique up to translation.

\noindent\textbf{ Keywords:} Lotka-Volterra competition system; Advection;
Pulsating fronts; Existence; Uniqueness; Stability.

\noindent\textbf{AMS Subject Classification (2000)}: 35K57; 37C65; 92D25.

\end{abstract}

\section{Introduction}
This paper is the first one in a series of two, concerning about
the following reaction-diffusion-advection competition system in a periodic habitat
\begin{equation}\label{u1u2}
\begin{cases}
\frac{\partial u_1}{\partial t}=L_1u_1+u_1\left(b_1(x)-a_{11}(x)u_1-a_{12}(x)u_2\right),\\
\frac{\partial u_2}{\partial t}=L_2u_2+u_2\left(b_2(x)-a_{21}(x)u_1-a_{22}(x)u_2\right),
\end{cases}
t>0,~x\in\Bbb R,
\end{equation}
where $L_i:=d_i(x)\frac{\partial^2}{\partial x^2}-a_i(x)\frac{\partial}{\partial x}$,
$u_1$ and $u_2$ denote population densities of two competition species in an $L$-periodic habitat for some $L>0$.
$d_i(\cdot),\ a_i(\cdot)$ and $b_i(\cdot)$ are $L$-periodic functions in $C^\nu(\Bbb R)$ with some $\nu\in(0,1)$,
denoting the diffusion, advection and growth rates of $u_i$, respectively.
$d_i(\cdot)\geq d_0>0$, that is, $L_i$ is uniformly elliptic.
$a_{ij}(x)>0$ are $L$-periodic functions representing inter- and intraspecific competition coefficients, $i,j=1,2$.

Reaction-diffusion models of the type \eqref{u1u2} are widely used to capture the spatial dynamics of two competing species.
Of particular interest is to understand the propagation phenomena as well as spreading properties that the system exhibits.
Over the past decades, quite a few literatures have been devoted to the homogeneous cases,
mainly concerning about persistence, extinction, biological invasions, the minimal wave speeds, traveling waves and entire solutions,
see, e.g., \cite{hosono1989,kan-on1995,Lewis2002,liang2007,morita2009,lv2010,huang2011,guo2011,zhang2012} and references therein.
It is also well known that the periodic environment of space and/or time is one of the very useful approximations
to understand the influence of the environmental heterogeneity on the propagation phenomena arising
from ecological and biological processes. Whereas the homogeneous model has attracted many works in the mathematical literature,
propagation phenomena such as steady state problems, spreading speeds and traveling waves for
spatially and/or temporally heterogeneous systems like \eqref{u1u2} were studied more recently.
More specifically, in spatially heterogeneous environment, Dockery et al. \cite{Dockery1998} studied the effect of
dispersal rates on the survival of two competing species and showed that the slower diffuser always prevails.
Lou \cite{lou2006} proved that the two weakly competing species with some appropriate dispersal rates can no longer exist
in a spatially heterogenous environment.
Lam and Ni \cite{lam2012} considered the interactions between diffusion and heterogeneity of the environment of two species competition diffusion system with spatial heterogeneous growth rates in a bounded domain.
He and Ni further studied in a series of three papers \cite{he2016,he20161,he2017}
the combined effects of dispersal and spatial variations on the outcome of the competition, and
the joint effects of diffusion and spatial concentration on the global dynamics of the competition-diffusion
system with equal amount of total resources.
While Lutscher et al. \cite{Lutscher2007} added the advection term into such a
competition model and discussed spatial patterns and coexistence mechanisms for stream populations.
For a monostable semiflow in one-dimensional periodic environment,
Liang and Zhao \cite{liang2010} proved the existence of the spreading speed and its coincides with the minimal wave speed,
and Fang and Zhao \cite{fang2011} established the existence of bistable traveling fronts by interpreting the bistability
from a viewpoint of monotone dynamical systems to find a link with its monostable subsystems.
One can also see \cite{kong2015,bao2016} for spreading speeds and traveling waves for systems with nonlocal dispersal.

Recently, Yu and Zhao \cite{Yu2017} considered the propagation phenomena of reaction-diffusion-advection system \eqref{u1u2}.
They established the existence of the rightward spreading speed and its coincidence with
the minimal wave speed for spatially periodic rightward traveling waves.
Girardin \cite{Girardin2016} proved the existence of spatially periodic traveling waves for a bistable competition-diffusion system with periodic intraspecific competition coefficients.
In time heterogeneous environment, one can see \cite{zhao2011,bao2013,zhang2013,zhao2014,ma2016} for time periodic traveling waves, \cite{du2016,du20171} for other types of entire solutions, and \cite{liang2006} for spreading speeds and traveling waves of  time-periodic semiflows in one dimensional environment.
For time-space periodic environment, one can see a more recent paper \cite{fang2017} for the existence of traveling waves and
spreading speeds of time-space periodic monotone semiflows.

The current paper is the first one in a series of two, mainly devoted to the study of existence,
stability and uniqueness of pulsating fronts (see Definition \ref{def}) for the Lotka-Volterra competition system with advection in a periodic habitat. In the forthcoming paper \cite{du2017},
we are concerned with some other types of entire solutions by considering the interaction of these pulsating fronts.

Let us now make the precise assumptions. Assume that $d(\cdot),\ a(\cdot)$ and $b(\cdot)$ are $L$-periodic functions in $C^{\nu}(\Bbb R)$ and $d(\cdot)>0$ in $\Bbb{R}$. Then the periodic eigenvalue problem
\begin{equation*}
\begin{cases}
\lambda\phi=d(x)\phi^{\prime\prime}-a(x)\phi^{\prime}+b(x)\phi,\quad x\in\Bbb R,\\
\phi(x+L)=\phi(x),\quad x\in\Bbb R
\end{cases}
\end{equation*}
admits a principal eigenvalue $\lambda_0(d,a,b)$ associated with a positive $L$-periodic eigenfunction (see, e.g.,\cite{smith1995}).
Furthermore, $\lambda_0(d,a,b)$ is monotone increasing with respect to $b$ in the sense that,
if $b_1\geq b_2$ and $b_1\not\equiv b_2$ then $\lambda_0(d,a,b_1)>\lambda_0(d,a,b_2)$ (see, e.g., \cite[Lemma 15.5]{hess1991}).
Assume further that $c(\cdot)\geq,\not\equiv0$ is an $L$-periodic function in $C^{\nu}(\Bbb R)$.
Consider the following initial value problem
\begin{equation}\label{e}
\begin{cases}
\frac{\partial u}{\partial t}=d(x)\frac{\partial^2 u}{\partial {x^2}}-a(x)\frac{\partial u}{\partial x}+u(b(x)-c(x)u),\quad t>0,\ x\in\Bbb R,\\
u(0,x)=\phi(x),\quad x\in\Bbb R,
\end{cases}
\end{equation}
where $\phi$ is continuous and positive $L$-periodic in $\Bbb{R}$.
It follows from \cite[Theorem 2.3.4]{zhao2003} that if $\lambda_0(d,a,b)\leq0$,
then $u\equiv0$ is globally asymptotic stable with respect to periodic perturbations, and
if $\lambda_0(d,a,b)>0$, \eqref{e} admits a unique positive $L$-periodic solution $u^*(x)$ which is
globally asymptotic stable with respect to periodic perturbations for any nontrivial initial values.

Denote by $\Bbb P=PC(\Bbb R,\Bbb R^2)$ the set of all continuous and $L$-periodic functions from
$\Bbb R$ to $\Bbb R^2$ equipped with a norm
$\lVert\phi\rVert_{\Bbb P}=\mathop{\max}\limits_{x\in\Bbb R}\lvert\phi(x)\rvert$, then
$\Bbb P_+:=\{\phi\in \Bbb P: \phi(x)\geq0,\forall x\in\Bbb R\}$ is a closed cone of $Y$
inducing a partial ordering on $\Bbb P,$ and $(\Bbb P,\lVert\cdot\rVert_Y)$ is a Banach lattice.
Throughout the paper, we always assume that the trivial solution $(0,0)$
is linearly unstable with respect to the perturbation in $\Bbb P_+$, that is,
\begin{equation}\label{00}
\lambda_0(d_i,a_i,b_i)>0,\quad i=1,2.
\end{equation}
One should note that, by the monotonicity of $\lambda_0(d_i,a_i,b_i)$ with respect to $b_i$,
\eqref{00} holds true in particular if $b_i(\cdot)\geq,\not\equiv0$ for $i=1,2$.
By virtue of \eqref{00} and the above argument,
we know that there exists two positive $L$-periodic functions $u_1^*(x)$ and $u_2^*(x)$ satisfying respectively
$$0=L_1u_1+u_1(b_1(x)-a_{11}(x)u_1)\text{~~and~~}0=L_2u_2+u_2(b_2(x)-a_{22}(x)u_2),\ x\in\Bbb R,$$
such that $(u_1^*(x),0)$ and $(0,u_2^*(x))$ are two semitrivial periodic steady states of system \eqref{u1u2} in $\Bbb P_+$.
To impose a bistable structure on \eqref{u1u2}, we further assume
\begin{itemize}
\item[(H1)] $\lambda_0(d_1,a_1,b_1-a_{12}u_2^*)<0\text{~~and~~}\lambda_0(d_2,a_2,b_2-a_{21}u_1^*)<0$.
\end{itemize}
It then follows that $(u_1^*(x),0)$ and $(0,u_2^*(x))$ are two locally linearly stable periodic steady states
with respect to the perturbation in $\Bbb P_+$. Furthermore, system \eqref{u1u2} has at least one unstable coexistence steady state
(see, e.g., \cite[Theorem 2.4]{ferter1994}).

We would like to mention here that, under the assumptions that $\lambda_0(d_1,a_1,b_1-a_{12}u_2^*)>0$
(that is, $(0,u_2^*(x))$ is a linearly unstable steady state) and that the system has no steady state in Int$(\Bbb P_+)$,
it follows that $(u_1^*(x),0)$ is globally asymptotic stable for all initial values $(\phi_1,\phi_2)\in\Bbb P_+$
with $\phi_1\not\equiv0$ (see e.g., \cite[Theorem 2.1]{Yu2017}), namely,
system \eqref{u1u2} admits a \textbf{monostable} structure.
Yu and Zhao \cite{Yu2017} then considered the propagation phenomena of system \eqref{u1u2} with this monostable structure
and established the existence of spatially periodic traveling waves
connecting the two semitrivial periodic solutions.
In the present work, we focus system \eqref{u1u2} on the standard \textbf{bistable} structure.
To this end, we impose the following
\begin{itemize}
\item[(H2)] System \eqref{u1u2} has no stable periodic steady state in Int$(\Bbb P_+)$.
\end{itemize}
It is clear that (H2) together with (H1) confirms that system \eqref{u1u2} admits the standard bistable structure, that is,
there are two locally stable periodic semitrivial steady states $(u_1^*(x),0)$ and $(0,u_2^*(x))$, and
all the periodic intermediate (coexistence) steady states of system \eqref{u1u2} are unstable.

\begin{remark} \rm
It seems to be a very difficult but interesting problem to investigate sufficient conditions for the existence, uniqueness
and stability of periodic coexistence steady states of system \eqref{u1u2},
we leave it as an open problem, since it is not our point at present.
Indeed, we can conjecture as in \cite{Girardin2016} that the nonexistence of a stable periodic coexistence
steady state is not a necessary condition for the existence of a bistable pulsating front for system \eqref{u1u2},
it is only a required step toward the existence of pulsating fronts obtaining by using the abstract theory developed in
\cite{fang2011}.
Particularly, if we consider system \eqref{u1u2} as following
\begin{equation}\label{G}
\begin{cases}
\frac{\partial u_1}{\partial t}=\frac{\partial^2 u_1}{\partial x^2}+u_1\left(b_1(x)-a_{11}(x)u_1-a_{12}u_2\right),\\
\frac{\partial u_2}{\partial t}=d\frac{\partial^2 u_2}{\partial x^2}+u_2\left(b_2(x)-a_{21}u_1-a_{22}(x)u_2\right),
\end{cases}
t>0,~x\in\Bbb R,
\end{equation}
where $d,\ a_{12}$ and $a_{21}$ are positive constants,
$b_i(\cdot)>0$ and $a_{ii}(\cdot)>0$ are $L$-periodic functions in $C^\nu(\Bbb R)$ for some $\nu\in(0,1)$.
It then easily follows that $\lambda_0(1,0,b_1)>0$ and $\lambda_0(d,0,b_2)>0$,
and then \eqref{G} admits two semitrivial steady states (see, e.g., \cite{berestycki20051}).
Furthermore, it follows from \cite[Propositions 2.1 and 2.10]{Girardin2016} that the two semitrivial steady states are
locally linearly stable and any periodic coexistence steady state is unstable provided that $L$ is sufficiently small
and $a_{ij},\ i\neq j$ are sufficiently large, that is, (H1) and (H2) hold for \eqref{G},
which yields that system \eqref{G} admits the standard bistable structure.
\end{remark}

Let $\mathcal C$ be the set of all bounded and continuous functions from $\Bbb R$ to $\Bbb R^2$ and denote
$\mathcal C_+=\{\bm{u}\in\mathcal C: \bm{u}(x)\geq\bm{0}\text{~~for all~~} x\in\Bbb R\}$. Then $\mathcal C_+$ is a closed cone of $\mathcal C$.
We introduce a norm $\lVert\cdot\rVert_{\mathcal C}$ by $\lVert\bm{u}\rVert_{\mathcal C}=\mathop{\max}\limits_{x\in\Bbb{R}}|\bm{u}(x)|$.
For any $\bm{a}_1,\bm{a}_2\in\mathcal C$ with $\bm{a}_1<\bm{a}_2$, denote
$\mathcal C_{[\bm{a_1},\bm{a_2}]}=\{\bm{u}\in\mathcal C: \bm{a}_1\leq\bm{u}\leq\bm{a}_2\}$ and
$\mathcal C_{[\bm0,\bm{\beta}]}$ by $\mathcal C_{\bm{\beta}}$ for any $\bm{\beta}\in\rm{Int}(\mathcal C_+)$.
Hereafter, we use the usual notations for classical partial order on $\mathcal C$.
Namely, for any $\bm{u}=(u_1,u_2), \bm{v}=(v_1,v_2)\in\mathcal C$, writing $\bm{u}=\bm{v}$ if $u_1=v_1$ and $u_2=v_2$,
$\bm{u}\bm {v}=(u_1u_2,v_1v_2)$, $a\bm{u}=(au_1,au_2)$ for any constant $a$,
$\bm{u}\leq\bm{v}$ if $u_i(\cdot)\leq v_i(\cdot)$ in $\Bbb{R}$ for $i=1,2$,
$\bm{u}<\bm{v}$ if $\bm{u}\leq\bm{v}$ and $\bm{u}\neq\bm{v}$,
$\bm{u}\ll\bm{v}$ if $u_i(x)<v_i(x)$ in $\Bbb{R}$ for $i=1,2$,
and other relations are similarly to be understood componentwise.
In particular, denote by $\bm{0}=(0,0), \bm{1}=(1,1)$
and $\bm{\alpha}\in\langle\bm{0},\bm{1}\rangle$ if $\bm{0}\ll\bm{\alpha}\ll\bm{1}$.

By a change of variables
$$\tilde{u}_1(t,x)=\frac{u_1(t,x)}{u_1^*(x)},\quad\tilde{u}_2(t,x)=\frac{u_2^*(x)-u_2(t,x)}{u_2^*(x)},$$
we transform \eqref{u1u2} into the following (dropping the tilde for the convenience of writing)
\begin{equation}\label{u1u2-0-1}
\begin{cases}
\frac{\partial u_1}{\partial t}=d_1(x)\frac{\partial u_1^2}{\partial x^2}-a_1^*(x)\frac{\partial u_1}{\partial x}+f_1(x,u_1,u_2),\\
\frac{\partial u_2}{\partial t}=d_2(x)\frac{\partial u_1^2}{\partial x^2}-a_2^*(x)\frac{\partial u_2}{\partial x}+f_2(x,u_1,u_2),
\end{cases}
t>0,~x\in\Bbb R,
\end{equation}
where $L_i^*:=d_i(x)\frac{\partial^2}{\partial x^2}-a_i^*(x)\frac{\partial}{\partial x}$ is uniformly elliptic,
$a_i^*(x)=a_i(x)-2d_i(x)\frac{(u_i^*)^\prime(x)}{u_i^*(x)}$, $a_{i1}^*(x)=a_{i1}(x)u_1^*(x)$, $a_{i2}^*(x)=a_{i2}(x)u_2^*(x)$ for $i=1,2$, and
\begin{align*}
f_1(x,u_1,u_2)&=u_1\left[a_{11}^*(x)(1-u_1)-a_{12}^*(x)(1-u_2)\right],\\
f_2(x,u_1,u_2)&=(1-u_2)\left[a_{21}^*(x)u_1-a_{22}^*(x)u_2\right].
\end{align*}
It is easily seen that system \eqref{u1u2-0-1} is cooperative in the region $u_1\geq0$ and $0\leq u_2\leq 1$,
with periodic steady states $(0,1)$, $(0,0)$ and $(1,1)$.
On the other hand, if we let $\phi_i(\cdot)>0$ be positive $L$-periodic eigenfunction associated with
$\lambda_0(d_i,a_i,b_i)$, that is,
$$\lambda_0(d_i,a_i,b_i)\phi=d_i(x)\phi_i^{\prime\prime}-a_i(x)\phi_i^{\prime}+b_i(x)\phi_i,~~x\in\Bbb R,$$
then $\psi_i(x):=\frac{\phi_i(x)}{u_i^*(x)}>0$ satisfies
$$\lambda_0(d_i,a_i,b_i)\psi_i=d_i(x)\psi_i^{\prime\prime}-a_i^*(x)\psi_i^{\prime}+a_{ii}^*(x)\psi_i,\quad x\in\Bbb R,$$
and thus $\lambda_0(d_i,a_i^*,a_{ii}^*)=\lambda_0(d_i,a_i,b_i)>0$ for $i=1,2$.
As a matter of the fact, we can similarly verify that
\begin{equation}\label{i-j}
\lambda(d_i,a_i,b_i-2a_{ii}u_i^*)=\lambda(d_i,a_i^*,-a_{ii}^*),
\ \lambda(d_i,a_i^*,a_{ii}^*-a_{ij}^*)=\lambda(d_i,a_i,b_i-a_{ij}u_j^*),\ i,j=1,2, i\neq j.
\end{equation}
Therefore, one have the following equivalent assumption to (H1) and (H2) on system \eqref{u1u2-0-1}.
\begin{itemize}
\item[(A1)] $\lambda_0(d_1,a_1^*,a_{11}^*-a_{12}^*)<0$ and $\lambda_0(d_2,a_2^*,a_{22}^*-a_{21}^*)<0$.
\item[(A2)] System \eqref{u1u2-0-1} has no stable periodic steady state in $\langle\bm{0},\bm{1}\rangle$.
\end{itemize}
That is, system \eqref{u1u2-0-1} has two locally linearly stable steady states $\bm{0}$ and $\bm{1}$,
and any coexistence steady state between $\bm{0}$ and $\bm{1}$ is unstable,
where we denote $(\hat{u}_1,\hat{u}_2)$ by any periodic coexistence steady state of system \eqref{u1u2-0-1} in the sequel.
Noting that the linearized system of \eqref{u1u2-0-1} at $(\hat{u}_1,\hat{u}_2)$ is cooperative and irreducible,
and thus the Krein-Rutman theorem yields that there exists a positive $L$-periodic eigenfunction $(\hat{\phi}_1(x),\hat{\phi}_2(x))$
associated with a principal eigenvalue $\hat{\lambda}>0$ such that
\begin{equation*}
\begin{cases}
\hat{\lambda}\hat{\phi}_1=d_1\hat{\phi}_1^{\prime\prime}-a_1^*\hat{\phi}_1^{\prime}
+[a_{11}^*(x)-a_{12}^*(x)-2a_{11}^*(x)\hat{u}_1+a_{12}^*(x)\hat{u}_2]\hat{\phi}_1+a_{12}^*(x)\hat{u}_1\hat{\phi}_2,\ x\in\Bbb{R},\\
\hat{\lambda}\hat{\phi}_2=d_2\hat{\phi}_2^{\prime\prime}-a_2^*\hat{\phi}_2^{\prime}
+a_{21}^*(x)(1-\hat{u}_2)\hat{\phi}_1-[a_{22}^*(x)+a_{21}^*(x)\hat{u}_1-2a_{22}^*(x)\hat{u}_2]\hat{\phi}_2,\ x\in\Bbb{R},\\
\hat{\phi}_i(x+L)=\hat{\phi}_i(x),\ i=1,2,\ x\in\Bbb{R}.
\end{cases}
\end{equation*}
Then \eqref{u1u2-0-1} admits the standard bistable structure.
In the sequel, we shall deal with system \eqref{u1u2-0-1} under assumptions (A1) and (A2) due to its equivalence to system \eqref{u1u2}.
The definition of pulsating fronts is given as following.

\begin{definition}\label{def}
An entire solution $$\bm{u}(t,x)=\bm{U}(x,x+ct)=(U_1(x,x+ct),U_2(x,x+ct))$$
of system \eqref{u1u2-0-1} is called a leftward pulsating front with wave speed $c$, if
$\bm{U}(\cdot,\cdot+a)\in\mathcal C_{\bm{1}}$ for any $a\in\Bbb R$, and
$\bm{U}(x,\cdot)=\bm{U}(x+L,\cdot)$ for any $x\in\Bbb R$.
Moreover, we say that $\bm{U}(x,z)$ connects $\bm{0}$ to $\bm{1}$ if
$\mathop{\lim}\limits_{z\to-\infty}\lvert \bm{U}(x,z)\rvert=0$ and
$\mathop{\lim}\limits_{z\to+\infty}\lvert \bm{U}(x,z)-\bm{1}\rvert=0$ uniformly for $x\in\Bbb R$.

Similarly,
an entire solution $$\bm{v}(t,x)=\bm{V}(x,x-ct)=(V_1(x,x-ct),V_2(x,x-ct))$$ of system \eqref{u1u2-0-1}
is called a rightward pulsating front with wave speed $c$, if
$\bm{V}(\cdot,\cdot+a)\in\mathcal C_{\bm{1}}$ for any $a\in\Bbb R$, and
$\bm{V}(x,\cdot)=\bm{V}(x+L,\cdot)$ for any $x\in\Bbb R$.
Moreover, we say that $\bm{V}(x,z)$ connects $\bm{1}$ to $\bm{0}$ if
$\mathop{\lim}\limits_{z\to-\infty}\lvert \bm{V}(x,z)-\bm{1}\rvert=0$ and
$\mathop{\lim}\limits_{z\to+\infty}\lvert \bm{V}(x,z)\rvert=0$ uniformly for $x\in\Bbb R$.
\end{definition}

The rest of the paper is organized as follows.
In Section 2, we establish the existence of pulsating front of \eqref{u1u2-0-1} using the abstract theory
developed by Fang and Zhao \cite{fang2011}.
Section 3 is devoted to the construction of a pair of appropriate sub- and supersolutions.
In Section 4, we obtain the global stability and uniqueness of pulsating fronts
by using the convergence theorem for monotone semiflows (see \cite[Theorem 2.2.4]{zhao2003}),
one can also see a similar argument in \cite{xu2004,jin2008,zhang2012,bao2013,ding2014}.

\section{Existence of pulsating fronts}

In this section, we shall employ the abstract theory developed by Fang and Zhao \cite{fang2011}
to prove the existence of leftward and rightward pulsating fronts connecting $\bm{0}$ and $\bm{1}$.
Let $\bm{\beta}\in\rm{Int}(\mathcal C_+)$ and denote
$$\Pi_{\bm{\beta}}=\{\bm{u}\in\mathcal C_{\bm{\beta}}: \bm{u}(x)=\bm{u}(x+L),\ \forall x\in\Bbb R\},$$
$X=C([0,L],\Bbb{R}^2)$, $X_+=C([0,L],\Bbb{R}_+^2)$
and $X_{\bm{\beta}}=\{\bm{u}\in X: \bm{0}\leq \bm{u}\leq\bm{\beta}\}$.
Define
\begin{align*}
\mathcal{X}&=\{\bm{v}\in BC(\Bbb{R},X): \bm{v}(s)(L)=\bm{v}(s+L)(0),\ \forall s\in\Bbb{R}\},\\
\mathcal{X}_+&=\{\bm{v}\in\mathcal{X}: \bm{v}(s)\in X_+,\ \forall s\in\Bbb{R}\},\\
\mathcal{X}_{\bm{\beta}}&=\{\bm{v}\in BC(\Bbb{R},X_{\bm{\beta}}): \bm{v}(s)(L)=\bm{v}(s+L)(0),\ \forall s\in\Bbb{R}\},\\
\mathcal{K}_{\bm{\beta}}&=\{\bm{v}\in BC(L\Bbb{Z},X_{\bm{\beta}}): \bm{v}(iL)(L)=\bm{v}((i+1)L)(0),\ \forall i\in\Bbb{Z}\},
\end{align*}
where $BC(\Bbb{R},X)$ denotes the set of all continuous and bounded functions from $\Bbb{R}$ to $X$.
Let $T(t):={\rm{diag}}(T_1(t),T_2(t))$, where $T_1(t)$ and $T_2(t)$ are linear semigroups generated by
$$\frac{\partial u_1}{\partial t}=L_1^*u_1+u_1(a_{11}^*(x)-a_{12}^*(x))\text{~~and~~}
\frac{\partial u_2}{\partial t}=L_2^*u_2-a_{22}^*(x)u_2,$$
respectively.
Then $T_1(t)$ and $T_2(t)$ are compact with respect to the compact open topology for each $t>0$.
For any $\bm{u}=(u_1,u_2)\in\mathcal C_{\bm{1}}$, define $F:\mathcal C_{\bm{1}}\to\mathcal C$ by
$$F(\bm{u})=\left({\begin{array}{*{20}{c}}
-a_{11}^*(x)u_1^2+a_{12}^*(x)u_1u_2\\
a_{21}^*(x)u_1-a_{21}^*(x)u_1u_2+a_{22}^*(x)u_2^2\\
\end{array}}\right).$$
Now we can rewrite \eqref{u1u2-0-1} into the following integral form
\begin{equation}\label{Int}
\begin{cases}
\bm{u}(t)=T(t)\bm{u}(0)+\int_0^t{T(t-s)F(\bm{u}(s))ds},\ \ t>0,\\
\bm{u}(0)=\phi\in\mathcal C_{\bm{1}}.
\end{cases}
\end{equation}
Define a family of operators $\{Q_t\}_{t\geq0}$ on $\mathcal C_{\bm{1}}$ by $Q_t(\bm{\phi}):=\bm{u}(t,\cdot;\bm{\phi})$,
where $\bm{u}(t,\cdot;\bm{\phi})$ is the solution of \eqref{u1u2-0-1} with $\bm{u}(0,\cdot;\bm{\phi})=\bm{\phi}\in\mathcal C_{\bm{1}}$.
It can easily seen that for any $t>0$,
$Q_t$ is a monotone semiflow on $\mathcal C_{\bm{1}}$.
Noting also that $\{Q_t\}_{t\geq0}$ restricted on $\Pi_{\bm{1}}$ is strongly monotone in the sense that
if $\bm{\phi}>\bm{\psi}$, then $Q_t(\bm{\phi})\gg Q_t(\bm{\psi})$ for any $t>0$.
Since $T(t)$ is compact with respect to the compact open topology,
$\{Q_t\}:\mathcal C_{\bm{1}}\to\mathcal C_{\bm{1}}$ is continuous and compact with respect to the topology of the locally uniform
convergence. Therefore, assumptions (A2)-(A4) in \cite{fang2011} hold for each $Q_t$ with $t>0$.

Now consider the following two periodic eigenvalue problems
\begin{equation}\label{0-eigen}
\begin{cases}
\mu\phi_{01}=d_1\phi_{01}^{\prime\prime}-a_1^*\phi_{01}^{\prime}+(a_{11}^*(x)-a_{12}^*(x))\phi_{01},\quad x\in\Bbb R, \\
\mu\phi_{02}=d_2\phi_{02}^{\prime\prime}-a_2^*\phi_{02}^{\prime}+a_{21}^*(x)\phi_{01}-a_{22}^*(x)\phi_{02},\quad x\in\Bbb R, \\
\phi_{0i}(x)=\phi_{0i}(x+L),\ i=1,2,\quad x\in\Bbb R,
\end{cases}
\end{equation}
and
\begin{equation}\label{1-eigen}
\begin{cases}
\mu\phi_{11}=d_1\phi_{11}^{\prime\prime}-a_1^*\phi_{11}^{\prime}-a_{11}^*(x)\phi_{11}+a_{12}^*(x)\phi_{12},\quad x\in\Bbb R, \\
\mu\phi_{12}=d_2\phi_{12}^{\prime\prime}-a_2^*\phi_{12}^{\prime}+(a_{22}^*(x)-a_{21}^*(x))\phi_{12},\quad x\in\Bbb R, \\
\phi_{1i}(x)=\phi_{1i}(x+L),\ i=1,2,\quad x\in\Bbb R.
\end{cases}
\end{equation}
We introduce the following assumption.
\begin{itemize}
\item[(B1)]
$\lambda_0(d_2,a_2^*,-a_{22}^*)<\lambda_0(d_1,a_1^*,a_{11}^*-a_{12}^*)$,\quad
$\lambda_0(d_1,a_1^*,-a_{11}^*)<\lambda_0(d_2,a_2^*,a_{22}^*-a_{21}^*)$.
\end{itemize}
Assumption (B1) might be a technique assumption which ensures that
the periodic eigenvalue problem related to the linearized system of \eqref{u1u2-0-1} at $\bm{0}$ and $\bm{1}$
exactly admits a positive eigenvalue with the corresponding eigenfunction positive, respectively.
In fact, we cannot use the Krein-Rutman theorem to confirm the existence of the positive eigenfunction since
the linearized systems of \eqref{u1u2-0-1} at $\bm{0}$ and $\bm{1}$ are not irreducible, one can see a similar assumption (A3) in \cite{bao2013}.

\begin{remark}\rm
In view of \eqref{i-j}, if $d_1\equiv d_2$, $a_1\equiv a_2$, assumption (B1) holds in particular if
$$(a_{21}(x)-2a_{11}(x))u_1^*(x)\leq,\not\equiv b_2(x)-b_1(x)\leq,\not\equiv(2a_{22}(x)-a_{12}(x))u_2^*(x),\quad \forall x\in\Bbb R.$$
\end{remark}

\begin{lemma}\label{0-1-eigen-eigen}
Assume (A1)-(A2) and (B1). Then the periodic eigenvalue problems \eqref{0-eigen} and \eqref{1-eigen}
admit a eigenvalue $\mu_i^-<0$ associated with a unique positive periodic eigenfunction
$\Phi_i^*(x)=(\phi_{i1}^*(x),\phi_{i2}^*(x))$ for $i=0,1$, respectively.
\end{lemma}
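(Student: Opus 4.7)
The plan is to exploit the triangular (cooperative but reducible) structure of both systems. In \eqref{0-eigen} the equation for $\phi_{01}$ decouples completely from $\phi_{02}$; symmetrically, in \eqref{1-eigen} the equation for $\phi_{12}$ decouples from $\phi_{11}$. So in each case I would first solve a scalar periodic principal eigenvalue problem for the decoupled component, and then solve for the remaining component as a linear periodic boundary value problem whose invertibility and sign-preservation come precisely from the gap condition (B1).

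For \eqref{0-eigen}, I would set $\mu_0^-:=\lambda_0(d_1,a_1^*,a_{11}^*-a_{12}^*)$ and take $\phi_{01}^*>0$ as the associated positive $L$-periodic principal eigenfunction from the scalar theory recalled in the introduction. Assumption (A1) gives $\mu_0^-<0$. With $\mu=\mu_0^-$ and $\phi_{01}^*$ now fixed, the second equation reduces to the linear periodic problem
$$d_2\phi_{02}^{\prime\prime}-a_2^*\phi_{02}^{\prime}-\bigl(a_{22}^*(x)+\mu_0^-\bigr)\phi_{02}=-a_{21}^*(x)\phi_{01}^*(x),\quad \phi_{02}(x+L)=\phi_{02}(x).$$
Using the elementary shift identity $\lambda_0(d,a,b-\mu)=\lambda_0(d,a,b)-\mu$ for a constant $\mu$, the principal eigenvalue of the operator on the left is $\lambda_0(d_2,a_2^*,-a_{22}^*)-\mu_0^-$, which is strictly negative by the first inequality in (B1). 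Hence the operator is invertible on periodic functions and is inverse-positive (by Fredholm theory together with the strong maximum principle for periodic elliptic operators, as in \cite{hess1991}); since the right-hand side is non-positive and not identically zero, this forces $\phi_{02}^*>0$.

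The argument for \eqref{1-eigen} is entirely symmetric: I would set $\mu_1^-:=\lambda_0(d_2,a_2^*,a_{22}^*-a_{21}^*)<0$ (negative by (A1)) with associated positive periodic eigenfunction $\phi_{12}^*$, and then invert
$$d_1\phi_{11}^{\prime\prime}-a_1^*\phi_{11}^{\prime}-\bigl(a_{11}^*(x)+\mu_1^-\bigr)\phi_{11}=-a_{12}^*(x)\phi_{12}^*(x),\quad \phi_{11}(x+L)=\phi_{11}(x),$$
using the second inequality in (B1) to guarantee that the operator on the left has strictly negative principal eigenvalue; inverse positivity then yields $\phi_{11}^*>0$. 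Uniqueness of each $\Phi_i^*$ up to a positive scalar multiple is automatic: the decoupled component is unique by the classical uniqueness of the scalar principal eigenfunction, and the coupled component is then determined by the unique inverse constructed above.

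I do not expect any genuine obstacle. The only points that deserve care are (i) making the translation from (B1) into negativity of the off-diagonal operator's principal eigenvalue fully explicit via the constant-shift identity for $\lambda_0$, and (ii) invoking the correct inverse-positivity statement for a scalar $L$-periodic elliptic operator with negative principal eigenvalue. Both are standard and are best cited rather than re-derived; no other part of the argument uses the full system structure beyond the triangular decoupling.
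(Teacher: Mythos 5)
Your proposal is correct and follows essentially the same route as the paper: you set $\mu_0^-=\lambda_0(d_1,a_1^*,a_{11}^*-a_{12}^*)$ (resp.\ $\mu_1^-=\lambda_0(d_2,a_2^*,a_{22}^*-a_{21}^*)$) from the decoupled scalar component, invoke (A1) for negativity and (B1) for the spectral gap, and then solve the remaining linear periodic equation. The only difference is that the paper delegates the invertibility and inverse-positivity of the shifted off-diagonal operator to the argument of \cite[Proposition 4.2]{Yu2017}, whereas you make that step explicit via the constant-shift identity for $\lambda_0$ together with the strong maximum principle for periodic elliptic operators with negative principal eigenvalue.
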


\begin{proof}
For problem \eqref{0-eigen}, let $\phi_{01}^*(x)$ be the unique positive periodic eigenfunction satisfying
$\mathop{\max}\limits_{x\in[0,L]}\phi_{01}^*(x)=1$ associated with
the principal eigenvalue $\lambda_0(d_1,a_1^*,a_{11}^*-a_{12}^*)$,
that is,
$$\lambda_0(d_1,a_1^*,a_{11}^*-a_{12}^*)\phi_{01}^*
=d_1(\phi_{01}^*)^{\prime\prime}-a_1^*(\phi_{01}^*)^{\prime}+(a_{11}^*-a_{12}^*)\phi_{01}^*,
\quad\max_{x\in[0,L]}\phi_{01}^*(x)=1.$$
In terms of (B1), there holds
$\lambda_0(d_2,a_2^*,-a_{22}^*)<\lambda_0(d_1,a_1^*,a_{11}^*-a_{12}^*)$.
The same argument as in the proof of \cite[Proposition 4.2]{Yu2017} implies that there exists
a unique positive periodic function $\phi_{02}^*(x)$ such that
$\Phi_0^*(x)=(\phi_{01}^*(x),\phi_{02}^*(x))$ is the unique positive periodic eigenfunction of \eqref{0-eigen}
associated with $\mu_0^-:=\lambda_0(d_1,a_1^*,a_{11}^*-a_{12}^*)<0$.

Similarly, for problem \eqref{1-eigen}, since $\lambda_0(d_1,a_1^*,-a_{11}^*)<\lambda_0(d_2,a_2^*,a_{22}^*-a_{21}^*)$,
there exits a unique positive periodic eigenfunction
$\Phi_1^*(x)=(\phi_{11}^*(x),\phi_{12}^*(x))$ satisfying $\mathop{\max}\limits_{x\in[0,L]}\phi_{12}^*(x)=1$
associated with $\mu_1^-:=\lambda_0(d_2,a_2^*,a_{22}^*-a_{21}^*)<0$.
The proof is complete.
\end{proof}

Next we verify the strongly stability of $\bm{0}$ and $\bm{1}$ from above and below respectively,
and the strongly instability of any periodic coexistence steady state $(\hat{u}_1,\hat{u}_2)$ from both above and below,
for each $Q_t$ with $t>0$, in the sense of the following definition.

\begin{definition}[See \cite{fang2011}] A steady state $\bm{\alpha}\in\Pi_{\bm{\beta}}$ is said to be
strongly stable from below for the map $Q:\Pi_{\bm{\beta}}\to\Pi_{\bm{\beta}}$,
if there exist a positive number $\eta_0$ and a strongly positive element $\bm{e}\in\Pi_{\bm{\beta}}$ such that
$$Q(\bm{\alpha}-\eta\bm{e})\gg\bm{\alpha}-\eta\bm{e},\quad \forall \eta\in(0,\eta_0].$$
The strongly instability from below is defined by reversing the inequality.
Similarly, we can define strongly stability (instability) from above.
\end{definition}

\begin{lemma}\label{0-1}
Assume (A1)-(A2) and (B1). Then for each $Q_t$ with $t>0$, we have
\begin{itemize}
\item[(i)] $\bm{0}$ and $\bm{1}$ are strongly stable periodic steady states from above and below respectively.
\item[(ii)] Any periodic coexistence steady state $\hat{\bm{u}}=(\hat{u}_1,\hat{u}_2)$ is strongly unstable from both above and below.
\end{itemize}
\end{lemma}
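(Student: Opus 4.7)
The plan is to build explicit super- and subsolutions that evolve exponentially along the relevant principal eigendirection and then invoke the comparison principle for the cooperative system \eqref{u1u2-0-1}. Throughout, $\Phi_0^*$ and $\Phi_1^*$ denote the positive periodic eigenfunctions from Lemma \ref{0-1-eigen-eigen}, and $\hat{\bm{\phi}} = (\hat{\phi}_1, \hat{\phi}_2)$ the positive periodic eigenfunction at the coexistence steady state $\hat{\bm{u}}$, with principal eigenvalue $\hat{\lambda} > 0$, recorded just before the statement.

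For (i), I would fix $\sigma \in (0, -\mu_0^-)$ and try $\bar{\bm{u}}(t,x) = \eta e^{-\sigma t} \Phi_0^*(x)$ as a supersolution. Substituting into the PDE and using \eqref{0-eigen} to kill the linear part yields, for each $i = 1, 2$,
\begin{equation*}
\partial_t \bar{u}_i - L_i^* \bar{u}_i - f_i(x, \bar{\bm{u}}) \;=\; \bar{u}_i \bigl[(-\sigma - \mu_0^-) + r_i(x, \bar{\bm{u}})\bigr],
\end{equation*}
where $r_i$ is linear in $\bar{\bm{u}}$ with bounded periodic coefficients, hence $r_i = O(\eta)$. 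Since $-\sigma - \mu_0^- > 0$ is a fixed positive number, the bracket is positive for all $\eta$ below some threshold $\eta_0 > 0$, so $\bar{\bm{u}}$ is a supersolution lying in $\mathcal{C}_{\bm{1}}$. Comparison gives $Q_t(\eta \Phi_0^*) \leq \bar{\bm{u}}(t,\cdot) = \eta e^{-\sigma t} \Phi_0^* \ll \eta \Phi_0^*$ for every $t > 0$, by strict positivity of $\Phi_0^*$ and $e^{-\sigma t} < 1$. The argument at $\bm{1}$ is completely parallel, now with the subsolution $\underline{\bm{u}}(t,x) = \bm{1} - \eta e^{-\sigma t} \Phi_1^*(x)$, $\sigma \in (0, -\mu_1^-)$, and \eqref{1-eigen} in place of \eqref{0-eigen}.

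For (ii), with the coexistence steady state $\hat{\bm{u}}$, I would fix $\sigma \in (0, \hat{\lambda})$ and test $\bar{\bm{u}}(t,x) = \hat{\bm{u}}(x) - \eta e^{\sigma t} \hat{\bm{\phi}}(x)$ for strong instability from below and, symmetrically, $\underline{\bm{u}}(t,x) = \hat{\bm{u}}(x) + \eta e^{\sigma t} \hat{\bm{\phi}}(x)$ for strong instability from above. Expanding $f_i(x, \bar{\bm{u}})$ to second order about $\hat{\bm{u}}$, using the steady-state equation $L_i^* \hat{u}_i + f_i(x, \hat{\bm{u}}) = 0$ together with the linearized eigenfunction equation for $\hat{\bm{\phi}}$, the residual collapses to
\begin{equation*}
\partial_t \bar{u}_i - L_i^* \bar{u}_i - f_i(x, \bar{\bm{u}}) \;=\; (\hat{\lambda} - \sigma)\,\eta e^{\sigma t} \hat{\phi}_i \;+\; O(\eta^2 e^{2\sigma t}).
\end{equation*}
Because $\hat{\phi}_i$ is bounded away from zero on $[0,L]$ and $\hat{\lambda} - \sigma > 0$, the first term dominates whenever $\eta e^{\sigma t}$ is sufficiently small, so $\bar{\bm{u}}$ is a supersolution; comparison then delivers $Q_t(\hat{\bm{u}} - \eta \hat{\bm{\phi}}) \leq \hat{\bm{u}} - \eta e^{\sigma t} \hat{\bm{\phi}} \ll \hat{\bm{u}} - \eta \hat{\bm{\phi}}$ at any fixed $t > 0$. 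The subsolution version gives the opposite strict comparison from above.

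The main subtlety is that in (ii) the quadratic error $O(\eta^2 e^{2\sigma t})$ can swamp the dominant linear term $O(\eta e^{\sigma t})$ if $\eta e^{\sigma t}$ is not kept small, so the super/subsolution property is only local in time. This is not an obstacle, because the conclusion in the lemma is per-$t$: for each fixed $t > 0$ we simply shrink $\eta_0$ until $\eta_0 e^{\sigma t}$ is small enough. One should also check along the way that $\bar{\bm{u}}, \underline{\bm{u}}$ stay inside $\mathcal{C}_{\bm{1}}$ (again true by shrinking $\eta_0$) and that the final inequalities are genuinely strict in the $\ll$ sense, which is immediate from the explicit exponential factor $e^{\pm \sigma t}$ combined with the strict positivity of the eigenfunctions on $[0, L]$.
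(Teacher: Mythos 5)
Your proposal is correct, and for part (ii) it matches the paper's proof essentially verbatim: the paper also builds $\hat{\bm{u}} \pm \eta e^{\hat{\lambda}t/2}\hat{\bm{e}}$ from the coexistence eigenfunction and lets the exponential with rate strictly less than $\hat{\lambda}$ absorb the quadratic error, choosing the admissible $\eta$-range to depend on the fixed $t$. The residual you display, $(\hat{\lambda}-\sigma)\eta e^{\sigma t}\hat{\phi}_i + O(\eta^2 e^{2\sigma t})$, is exactly the structure in the paper with $\sigma = \hat{\lambda}/2$, where the quadratic piece is in fact a clean multiple of $\hat{\phi}_i$, so your ``bounded away from zero'' remark is automatic.

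For part (i) you take a mildly but genuinely different route. The paper first invokes Lyapunov stability of $\bm{0}$ as a pre-established fact---solutions starting in a small ball $B_\rho(\bm{0})\cap\Bbb{R}_+^2$ remain in $B_\varepsilon(\bm{0})$ for all $t>0$---and uses this a priori bound to show the actual trajectory $\bm{u}(t,\cdot;\eta\bm{e})$ satisfies a linear differential inequality for all time, after which it compares against $\eta e^{\mu_0^- t/2}\bm{e}$, the exact solution of the corresponding linear system. You instead feed $\eta e^{-\sigma t}\Phi_0^*$ with $\sigma\in(0,-\mu_0^-)$ directly into the nonlinear system and show by the exact factorization
$\partial_t \bar{u}_i - L_i^*\bar{u}_i - f_i(x,\bar{\bm{u}}) = \bar{u}_i\bigl[(-\sigma-\mu_0^-)+r_i(x,\bar{\bm{u}})\bigr]$, $r_i=O(\eta)$,
that it is a supersolution, then compare. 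Since $\bar{\bm{u}}$ decays in $t$, the $O(\eta)$ bound is uniform in $t$, so a single $\eta_0$ works for all $t>0$. This avoids appealing to local nonlinear stability as a black box, treats $\bm{0}$ by the same template you use at $\hat{\bm{u}}$, and is a bit cleaner; the paper's version trades that extra ingredient for a shorter linear-system comparison. Both are sound; just make sure to record that $\bar{u}_2=\eta e^{-\sigma t}\phi_{02}^*\le 1$ and $\bar{u}_1\ge 0$ so the comparison principle for the cooperative system applies, which you already flag.
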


\begin{proof}
(i) Noting that $\bm{0}$ is a locally stable steady state of \eqref{u1u2-0-1},
then for any $\varepsilon\in(0,1)$ satisfying
\begin{equation}\label{0-ineq}
\max\left\{\mathop{\max}\limits_{x\in[0,L]}a_{12}^*(x),
\mathop{\max}\limits_{x\in[0,L]}a_{22}^*(x)\right\}\varepsilon<-\frac{\mu_0^-}{2},
\end{equation}
there exists $\rho=\rho(\varepsilon)>0$ sufficiently small such that
for any $\bm{\phi}=(\phi_1,\phi_2)\in B_\rho(\bm{0})\cap\Bbb{R}_+^2$,
$\bm{u}(t,\cdot;\bm{\phi})\in B_\varepsilon(\bm{0})\cap\Bbb{R}_+^2$ for any $t>0$,
where $B_\sigma(\bm{\phi_0}):=\{\bm{\phi}\in\mathcal C:\lVert\bm{\phi}-\bm{\phi_0}\rVert_{\mathcal C}<\sigma\}$ for any $\sigma>0$ and  $\bm{\phi_0}\in\mathcal C$, and $\bm{u}(t,\cdot;\bm{\phi})$ is the solution of \eqref{u1u2-0-1} with initial value $\bm{\phi}$.

Let $\bm{e}:=(\phi_{01}^*,\phi_{02}^*)$. Choose some constant $\eta_0>0$ small enough such that
$\eta\bm{e}\in B_\rho(\bm{0})\cap\Bbb{R}_+^2$ for any $\eta\in(0,\eta_0]$.
It then follows from Lemma \ref{0-1-eigen-eigen} that
$$\bm{v}(t,x)=(v_1(t,x),v_2(t,x)):=\eta e^{\frac{\mu_0^-}{2}t}\bm{e}(x),\quad (t,x)\in(0,+\infty)\times\Bbb{R}$$
satisfies
\begin{equation}\label{v1-v2}
\begin{cases}
\frac{\partial v_1}{\partial t}=L_1^*v_1+\left(a_{11}^*(x)-a_{12}^*(x)-\frac{\mu_0^-}{2}\right)v_1,\quad t>0,~x\in\Bbb R,\\
\frac{\partial v_2}{\partial t}=L_2^*v_2+a_{21}^*(x)v_1-\left(a_{22}^*(x)+\frac{\mu_0^-}{2}\right)v_2,\quad t>0,~x\in\Bbb R,\\
(v_1(0,x),v_2(0,x))=\eta(\phi_{01}^*(x),\phi_{02}^*(x)),\quad x\in\Bbb R.
\end{cases}
\end{equation}
On the other hand, since $Q_t(\eta\bm{e})=\bm{u}(t,\cdot;\eta\bm{e})\in B_\varepsilon(\bm{0})\cap\Bbb{R}_+^2$ for any $t>0$,
one can see from \eqref{0-ineq} that $\bm{u}(t,\cdot;\eta\bm{e})=(u_1(t,\cdot;\eta\bm{e}),u_2(t,\cdot;\eta\bm{e}))$ satisfies
\begin{equation}\label{u1-u2}
\begin{cases}
\frac{\partial u_1}{\partial t}=L_1^*u_1+u_1\left[a_{11}^*(x)-a_{12}^*(x)-a_{11}^*(x)u_1+a_{12}^*(x)u_2\right]\\
\qquad\leq L_1^*u_1+\left(a_{11}^*(x)-a_{12}^*(x)-\frac{\mu_0^-}{2}\right)u_1,\quad t>0,~x\in\Bbb R,\\
\frac{\partial u_2}{\partial t}=L_2^*u_2+a_{21}^*(x)u_1-a_{22}^*(x)u_2-u_2\left[a_{21}^*(x)u_1-a_{22}^*(x)u_2\right]\\
\qquad\leq L_2^*u_2+a_{21}^*(x)u_1-\left(a_{22}^*(x)+\frac{\mu_0^-}{2}\right)u_2,\quad t>0,~x\in\Bbb R,\\
(u_1(0,x),u_2(0,x))=\eta(\phi_{01}^*(x),\phi_{02}^*(x)),\quad x\in\Bbb R.
\end{cases}
\end{equation}
In light of \eqref{v1-v2} and \eqref{u1-u2}, one can conclude from the comparison principle for cooperative systems
that $Q_t(\eta\bm{e})=\bm{u}(t,\cdot;\eta\bm{e})\leq\eta e^{\frac{\mu_0^-}{2}t}\bm{e}\ll\eta\bm{e}$ for any $t>0$ and
$\eta\in(0,\eta_0]$. That is, $\bm{0}$ is strongly stable from above for $Q_t$ with any $t>0$.
Similarly, we can show that $\bm{1}$ is strongly stable from below for $Q_t$ with any $t>0$.

(ii) Let $(\hat{\phi}_1,\hat{\phi}_2)>(0,0)$ be the eigenfunction corresponding to $\hat{\lambda}>0$
of the following periodic eigenvalue problem
\begin{equation}\label{coe-eigen}
\begin{cases}
\lambda\phi_1=d_1\phi_1^{\prime\prime}-a_1^*\phi_1^{\prime}
+[a_{11}^*(x)-a_{12}^*(x)-2a_{11}^*(x)\hat{u}_1+a_{12}^*(x)\hat{u}_2]\phi_1+a_{12}^*(x)\hat{u}_1\phi_2,\ x\in\Bbb{R}, \\
\lambda\phi_2=d_2\phi_2^{\prime\prime}-a_2^*\phi_2^{\prime}
+a_{21}^*(x)(1-\hat{u}_2)\phi_1-[a_{22}^*(x)+a_{21}^*(x)\hat{u}_1-2a_{22}^*(x)\hat{u}_2]\phi_2,\ x\in\Bbb{R}, \\
\phi_i(x+L)=\phi_i(x),\ i=1,2,\ x\in\Bbb{R}.
\end{cases}
\end{equation}
Denote $\hat{\bm{e}}:=(\hat{\phi}_1,\hat{\phi}_2)$ and $\eta_1=\eta_1(t):=\frac{\hat{\lambda}e^{-\frac{\hat{\lambda}}{2}t}}
{2\max\{\mathop{\max}\limits_{x\in[0,L]}a_{11}^*\hat{\phi}_1,\mathop{\max}\limits_{x\in[0,L]}a_{21}^*\hat{\phi}_1\}}$ for each $t>0$.
Let
$$\bm{w}(t,x)=(w_1(t,x),w_2(t,x)):=\hat{\bm{u}}(x)+\eta e^{\frac{\hat{\lambda}}{2}t}\hat{\bm{e}}(x),\quad (t,x)\in(0,+\infty)\times\Bbb{R},$$
where $0<\eta\leq\eta_1$ is a constant. Then direct calculation shows that
\begin{align*}
\frac{\partial w_1}{\partial t}&=L_1^*w_1+\frac{\hat{\lambda}}{2}\eta e^{\frac{\hat{\lambda}}{2}t}\hat{\phi}_1+f_1(x,\hat{u}_1,\hat{u}_2)\\
&\quad-\eta e^{\frac{\hat{\lambda}}{2}t}\left(\hat{\lambda}\hat{\phi}_1
-[a_{11}^*(x)-a_{12}^*(x)-2a_{11}^*(x)\hat{u}_1+a_{12}^*(x)\hat{u}_2]\hat{\phi}_1-a_{12}^*(x)\hat{u}_1\hat{\phi}_2\right)\\
&=L_1^*w_1+f_1(x,w_1,w_2)+\eta e^{\frac{\hat{\lambda}}{2}t}\hat{\phi}_1\left(-\frac{\hat{\lambda}}{2}+\eta e^{\frac{\hat{\lambda}}{2}t}(a_{11}^*\hat{\phi}_1-a_{12}^*\hat{\phi}_2)\right)\\
&\leq L_1^*w_1+f_1(x,w_1,w_2),\quad t>0,~x\in\Bbb R,
\end{align*}
and
\begin{align*}
\frac{\partial w_2}{\partial t}&=L_2^*w_2+\frac{\hat{\lambda}}{2}\eta e^{\frac{\hat{\lambda}}{2}t}\hat{\phi}_2+f_2(x,\hat{u}_1,\hat{u}_2)\\
&\quad-\eta e^{\frac{\hat{\lambda}}{2}t}\left(\hat{\lambda}\hat{\phi}_2-a_{21}^*(x)(1-\hat{u}_2)\hat{\phi}_1
+[a_{22}^*(x)+a_{21}^*(x)\hat{u}_1-2a_{22}^*(x)\hat{u}_2]\hat{\phi}_2\right)\\
&=L_2^*w_2+f_2(x,w_1,w_2)+\eta e^{\frac{\hat{\lambda}}{2}t}\hat{\phi}_2\left(-\frac{\hat{\lambda}}{2}+\eta e^{\frac{\hat{\lambda}}{2}t}(a_{21}^*\hat{\phi}_1-a_{22}^*\hat{\phi}_2)\right)\\
&\leq L_2^*w_2+f_2(x,w_1,w_2),\quad t>0,~x\in\Bbb R.
\end{align*}
That is, $(w_1,w_2)$ is a subsolution of \eqref{u1u2-0-1} in $(t,x)\in(0,+\infty)\times\Bbb R$.
The comparison principle then implies that
$Q_t(\hat{\bm{u}}+\eta\hat{\bm{e}})\geq\hat{\bm{u}}+\eta e^{\frac{\hat{\lambda}}{2}t}\hat{\bm{e}}\gg\hat{\bm{u}}+\eta\hat{\bm{e}}$
for any $t>0$ and $\eta\in(0,\eta_1]$, that is, $\hat{\bm{u}}$ is strongly unstable from above for each $Q_t$ with $t>0$.
Similarly, we can prove that $\hat{\bm{u}}$ is strongly unstable from below for each $Q_t$ with $t>0$. The proof is complete.
\end{proof}

Let $E$ be the set of all fixed points of $\{Q_t\}_{t\geq0}$ restricted on $\mathcal C_{\bm{1}}$,
denote by $\hat{E}$ the set of all periodic coexistence steady states,
i.e., $\hat{E}=\{\bm{\alpha}_1\in E: \bm{0}\ll\bm{\alpha}_1\ll\bm{1}\}$, then
$E=\hat{E}\cup\{\bm{0},\bm{\alpha}_2,\bm{1}\}$, where $\bm{\alpha}_2:=(0,1)$.
For any $\bm{\alpha}\in E\setminus\{\bm{0},\bm{1}\}$, in terms of Lemma \ref{0-1},
the bistable system $\{Q_t\}_{t\geq0}$ performs a monostable dynamics on
$\mathcal C_{[\bm{0},\bm{\alpha}]}$ and $\mathcal C_{[\bm{\alpha},\bm{1}]}$, respectively.
It remains to show the so-called counter-propagation assumption in \cite{fang2011}.

Introduce a family of operators $\{\hat{Q}_t\}_{t\geq0}$ on $\mathcal{X}_{\bm{1}}$ as
$$\hat{Q}_t[\bm{v}](s)(\theta):=Q_t[\bm{v}_s](\theta),
\ \ \forall \bm{v}\in\mathcal{X}_{\bm{1}},\ s\in\Bbb{R},\ \theta\in[0,L],\ t\geq0,$$
where $\bm{v}_s\in\mathcal C$ is defined by
$$\bm{v}_s(x)=\bm{v}(s+n_x)(\theta_x),\ \ \forall x=n_x+\theta_x\in\Bbb{R},\ n_x=L\left[\frac{x}{L}\right],\ \theta_x\in[0,L).$$
Then $\{\hat{Q}_t\}_{t\geq0}$ is a monotone semiflow on $\mathcal{X}_{\bm{1}}$.
Since $\{Q_t\}_{t\geq0}$ and $\{\hat{Q}_t\}_{t\geq0}$ are topologically conjecture,
the bistable system $\{\hat{Q}_t\}_{t\geq0}$ performs a monostable dynamics on
$\mathcal C_{[\bm{0},\bm{\alpha}]}$ and $\mathcal C_{[\bm{\alpha},\bm{1}]}$ respectively for any $\bm{\alpha}\in E\setminus\{\bm{0},\bm{1}\}$.
Thanks to Lemma \ref{0-1}, there exist some constants $\eta_i>0$ and
vectors $\bm{e}_i\in\rm{Int}(\mathcal{X}_+)$ for $i=1,2,$ such that
$\hat{Q}_1[\eta\bm{e}_0]\ll\eta\bm{e}_0$ for any $\eta\in(0,\eta_0]$, and
$\hat{Q}_1[\bm{1}-\eta\bm{e}_1]\gg\bm{1}-\eta\bm{e}_1$ for any $\eta\in(0,\eta_1]$.
Define
$$\theta^+:=\sup\{\theta\in[0,1]: \theta\bm{\alpha}\in\mathcal{X}_{[\bm{0},\eta_0\bm{e}_0]}\},\ \
\bm{\alpha}^+=\theta^+\bm{\alpha},$$
$$\theta^-:=\sup\{\theta\in[0,1]: \theta\bm{\alpha}+(1-\theta)\bm{1}\in\mathcal{X}_{[\bm{1}-\eta_1\bm{e}_1,\bm{1}]}\},\ \
\bm{\alpha}^-=\theta^-\bm{\alpha}+(1-\theta^-)\bm{1}.$$
As in \cite{fang2011}, we introduce
\begin{align*}
c_+^*(\bm{0},\bm{\alpha}):
&=\sup\left\{ c\in\Bbb{R}: \mathop{\lim}\limits_{n\to\infty,\ x\leq cn}\hat{Q}_1^n[\bm{\phi_{\bm{\alpha}}^+}](x)=\bm{0}\right\},\\
c_-^*(\bm{\alpha},\bm{1}):
&=\sup\left\{c\in\Bbb{R}: \mathop{\lim}\limits_{n\to\infty,\ x\geq-cn}\hat{Q}_1^n[\bm{\phi_{\bm{\alpha}}^-}](x)=\bm{1}\right\},
\end{align*}
where the initial functions $\bm{\phi_{\bm{\alpha}}^\pm}\in\mathcal{X}_{\bm{1}}$ satisfy respectively
$$\bm{\phi}_{\bm{\alpha}}^+(x)=\bm{\alpha},\ \forall x\geq1,\text{~~and~~}\bm{\phi}_{\bm{\alpha}}^+(x)=\bm{v_{\bm{\alpha}}^+},
\ \forall x\leq0,$$
$$\bm{\phi}_{\bm{\alpha}}^-(x)=\bm{\alpha},\ \forall x\leq-1,\text{~~and~~}\bm{\phi}_{\bm{\alpha}}^-(x)=\bm{v_{\bm{\alpha}}^-},
\ \forall x\geq0.$$
In the following, we are going to show $c_+^*(\bm{0},\bm{\alpha})+c_-^*(\bm{\alpha},\bm{1})>0$
for any $\bm{\alpha}\in E\setminus\{\bm{0},\bm{1}\}$, which nearly assures the propagation of a bistable pulsating front.

For any $\bm{\alpha}_1=(\hat{u}_1,\hat{u}_2)\in\hat{E}$,
to better understand the rightward propagation dynamics of $\{Q_t\}_{t\geq0}$ restricted on $\mathcal C_{[\bm{0},\bm{\alpha}_1]}$,
let $\tilde{v}_i(t,x)=\hat{u}_i(x)-u_i(t,x)$, $i=1,2$,
then this dynamics is equivalent to that of the following system restricted on $\mathcal C_{\bm{\alpha}_1}$:
\begin{equation}\label{u1u2-linearize+}
\begin{cases}
\frac{\partial \tilde{v}_1}{\partial t}=L_1^*\tilde{v}_1
+f_1(x,\hat{u}_1(x),\hat{u}_2(x))-f_1(x,\hat{u}_1(x)-\tilde{v}_1(t,x),\hat{u}_2(x)-\tilde{v}_2(t,x)),\\
\frac{\partial \tilde{v}_2}{\partial t}=L_2^*\tilde{v}_2
+f_2(x,\hat{u}_1(x),\hat{u}_2(x))-f_2(x,\hat{u}_1(x)-\tilde{v}_1(t,x),\hat{u}_2(x)-\tilde{v}_2(t,x)),
\end{cases}
\end{equation}
where $\bm{0}$ is unstable and $\bm{\alpha}_1\gg\bm{0}$ is stable.
Define a family of operators $\{\tilde{Q}_t\}_{t\geq0}$ on $\mathcal C_{\bm{\alpha}_1}$ by
$\tilde{Q}_t(\bm{\phi}):=\bm{\tilde{v}}(t,\cdot;\bm{\phi})$,
where $\bm{\tilde{v}}(t,\cdot;\bm{\phi})$ is the solution of \eqref{u1u2-linearize+} with
$\bm{\tilde{v}}(0,\cdot;\bm{\phi})=\bm{\phi}\in\mathcal C_{\bm{\alpha}_1}$.
Then $\tilde{Q}_t(\bm{0})=\bm{0}$, $\tilde{Q}_t(\bm{\alpha}_1)=\bm{\alpha}_1$ for all $t\geq0$, and
$\tilde{Q}_t$ satisfies all hypotheses (E1)-(E5) in \cite{liang2010} for any $t>0$.
It then follows from \cite[Theorem 5.1]{liang2010} that
$\tilde{Q}_1:\mathcal C_{\bm{\alpha}_1}\to\mathcal C_{\bm{\alpha}_1}$ admits
the rightward spreading speed $c_+^{**}(\bm{0},\bm{\alpha}_1)$, and then
$c_+^{*}(\bm{0},\bm{\alpha}_1)\geq c_+^{**}(\bm{0},\bm{\alpha}_1)$.

Similarly, to better understand the leftward propagation dynamics of $\{Q_t\}_{t\geq0}$ restricted on $\mathcal C_{[\bm{\alpha}_1,\bm{1}]}$,
let $\tilde{u}_i(t,x)=u_i(t,x)-\hat{u}_i(x)$, $i=1,2$,
then this dynamics is equivalent to that of the following system restricted on $\mathcal C_{\bm{\beta}}$:
\begin{equation}\label{u1u2-linearize-}
\begin{cases}
\frac{\partial \tilde{u}_1}{\partial t}=L_1^*\tilde{u}_1
+f_1(x,\tilde{u}_1(t,x)+\hat{u}_1(x),\tilde{u}_2(t,x)+\hat{u}_2(x))-f_1(x,\hat{u}_1(x),\hat{u}_2(x)),\\
\frac{\partial \tilde{u}_2}{\partial t}=L_2^*\tilde{u}_2
+f_2(x,\tilde{u}_1(t,x)+\hat{u}_1(x),\tilde{u}_2(t,x)+\hat{u}_2(x))-f_2(x,\hat{u}_1(x),\hat{u}_2(x)),
\end{cases}
\end{equation}
where $\bm{0}$ is unstable and $\bm{\beta}:=\bm{1}-\bm{\alpha}_1\gg\bm{0}$ is stable.
Define a family of operators $\{\tilde{P}_t\}_{t\geq0}$ on $\mathcal C_{\bm{\beta}}$ by
$\tilde{P}_t(\bm{\phi}):=\bm{\tilde{u}}(t,\cdot;\bm{\phi})$,
where $\bm{\tilde{u}}(t,\cdot;\bm{\phi})$ is the solution of \eqref{u1u2-linearize-} with
$\bm{\tilde{u}}(0,\cdot;\bm{\phi})=\bm{\phi}\in\mathcal C_{\bm{\beta}}$.
Noting that $\tilde{P}_t(\bm{0})=\bm{0}$, $\tilde{P}_t(\bm{\beta})=\bm{\beta}$ for all $t\geq0$, and
$\tilde{P}_t$ satisfies all hypotheses (E1)-(E5) in \cite{liang2010} for any $t>0$.
Then $\tilde{P}_1:\mathcal C_{\bm{\beta}}\to\mathcal C_{\bm{\beta}}$ admits
the leftward spreading speed $c_-^{**}(\bm{0},\bm{\beta})$, and then
$c_-^*(\bm{\alpha}_1,\bm{1})\geq c_-^{**}(\bm{0},\bm{\beta})$.

Next we use the linear operators approach (see \cite{liang2007,weinberger2003})
to estimate these two lower spreading speeds $c_+^{**}(\bm{0},\bm{\alpha}_1)$ and $c_-^{**}(\bm{0},\bm{\beta})$.

\begin{lemma}\label{coun-1}
Assume (A1)-(A2) and (B1). Then $c_+^{**}(\bm{0},\bm{\alpha}_1)+c_-^{**}(\bm{0},\bm{\beta})>0$.
\end{lemma}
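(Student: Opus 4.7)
The plan is to apply the linear-operators lower bound of Weinberger \cite{weinberger2003} and Liang--Zhao \cite{liang2007} to each of the cooperative monostable subsystems $\tilde Q_t$ on $\mathcal C_{\bm\alpha_1}$ and $\tilde P_t$ on $\mathcal C_{\bm\beta}$, and then to combine the two resulting estimates by means of a convexity argument for the principal eigenvalue of the parametrized linearization at $\hat{\bm u}$.

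Concretely, I would first linearize \eqref{u1u2-linearize+} and \eqref{u1u2-linearize-} at their respective trivial states, observing that both linearizations produce the \emph{same} cooperative irreducible periodic system
$$\partial_t w_i = L_i^* w_i + A_{i1}(x) w_1 + A_{i2}(x) w_2, \qquad i=1,2,$$
with coefficient matrix $A(x)$ exactly the Jacobian appearing in \eqref{coe-eigen}; in particular its periodic principal eigenvalue equals $\hat\lambda > 0$. Plugging in the ansatz $w_i(t,x) = e^{-\mu(x-ct)}\phi_i(x)$ with $L$-periodic $\phi_i$ gives a one-parameter family of periodic eigenvalue problems $\mathcal L_\mu \bm\phi = \Lambda(\mu)\bm\phi$; since $\mathcal L_\mu$ is cooperative and irreducible for each $\mu \in \Bbb R$, Krein--Rutman delivers a simple real principal eigenvalue $\Lambda(\mu)$ with positive $L$-periodic eigenfunction, continuous in $\mu$, satisfying $\Lambda(0) = \hat\lambda > 0$. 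The linear lower bound for cooperative monostable spreading speeds then yields
$$c_+^{**}(\bm 0,\bm\alpha_1) \geq c_+^{\mathrm{lin}} := \inf_{\mu>0}\frac{\Lambda(\mu)}{\mu}, \qquad c_-^{**}(\bm 0,\bm\beta) \geq c_-^{\mathrm{lin}} := \inf_{\mu>0}\frac{\Lambda(-\mu)}{\mu} = -\sup_{\mu<0}\frac{\Lambda(\mu)}{\mu}.$$

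It therefore suffices to prove $c_+^{\mathrm{lin}} + c_-^{\mathrm{lin}} > 0$, equivalently $\inf_{\mu>0}\Lambda(\mu)/\mu > \sup_{\mu<0}\Lambda(\mu)/\mu$. I would deduce this from the convexity of $\mu \mapsto \Lambda(\mu)$, a classical property of parametrized periodic principal eigenvalues of cooperative elliptic systems that can be obtained from the variational/Rayleigh-type characterization applied to the conjugated operator $e^{\mu x}\mathcal L_\mu e^{-\mu x}$. Since $\Lambda(0) > 0$ and $\Lambda(\mu) \sim \bar d \mu^2$ as $|\mu|\to\infty$ for some $\bar d > 0$, the function $\mu \mapsto \Lambda(\mu)/\mu$ attains its minimum $c_+^{\mathrm{lin}}$ at some $\mu_+^* > 0$ and its maximum on the negative axis $-c_-^{\mathrm{lin}}$ at some $\mu_-^* < 0$, and at each critical point one has $\Lambda'(\mu_+^*) = c_+^{\mathrm{lin}}$ and $\Lambda'(\mu_-^*) = -c_-^{\mathrm{lin}}$. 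Convexity of $\Lambda$ forces $\Lambda'$ to be non-decreasing, so $\mu_-^* < \mu_+^*$ gives $c_+^{\mathrm{lin}} \geq -c_-^{\mathrm{lin}}$; were equality to hold, $\Lambda$ would be affine on $[\mu_-^*, \mu_+^*]$, and the tangent line through the origin at $\mu_+^*$ would then force $\Lambda(0) = 0$, contradicting $\Lambda(0) = \hat\lambda > 0$.

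The main difficulty is proving the convexity of $\Lambda(\mu)$ rigorously in the $2\times 2$ periodic cooperative setting with non-trivial advection; once this is established, the tangent-line comparison is essentially routine. A secondary technical point is the verification of hypotheses (E1)--(E5) of \cite{liang2007} for $\tilde Q_t$ and $\tilde P_t$, so that the linear determinacy lower bound actually applies; this reduces to the strong monotonicity, compactness and translation invariance already verified for $\{Q_t\}_{t\geq 0}$, together with the local instability of $\bm 0$ inside each linearized dynamics.
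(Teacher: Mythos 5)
Your proposal follows essentially the same route as the paper: linearize both monostable subsystems at $\bm{0}$ (obtaining the same Jacobian as in \eqref{coe-eigen}), pass to the $\mu$-parametrized periodic eigenvalue problem whose principal eigenvalue $\Lambda(\mu)$ satisfies $\Lambda(0)=\hat\lambda>0$, invoke the Weinberger/Liang--Zhao linear lower bound for the spreading speed (via an auxiliary $\varepsilon$-perturbation, which the paper spells out by constructing $M_\varepsilon$ with $\tilde Q_1\geq M_\varepsilon(1)$ and letting $\varepsilon\to0$), and then conclude from convexity of $\Lambda$. The only minor deviation is the final convexity step, where you argue via tangent lines and monotonicity of $\Lambda'$ (requiring differentiability, which holds by simple-eigenvalue perturbation but is an extra point to justify), whereas the paper applies Jensen's inequality directly at the convex combination $\theta\mu_1+(1-\theta)(-\mu_2)=0$, avoiding any reference to derivatives; and note that for cooperative elliptic systems with advection the convexity of $\Lambda(\mu)$ does not follow from a Rayleigh-quotient characterization (the operator is not self-adjoint) but from the argument in \cite[Lemma 3.7]{liang2007}, which the paper cites.
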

\begin{proof}
Consider the linearized system of \eqref{u1u2-linearize+} at $\bm{0}$:
\begin{equation}\label{lin-1}
\begin{cases}
\frac{\partial v_1}{\partial t}=L_1^*v_1
+\frac{\partial f_1}{\partial u_1}(x,\hat{u}_1(x),\hat{u}_2(x))v_1
+\frac{\partial f_1}{\partial u_2}(x,\hat{u}_1(x),\hat{u}_2(x))v_2,\ \ t>0,\ x\in\Bbb{R},\\
\frac{\partial v_2}{\partial t}=L_2^*v_2
+\frac{\partial f_2}{\partial u_1}(x,\hat{u}_1(x),\hat{u}_2(x))v_1
+\frac{\partial f_2}{\partial u_2}(x,\hat{u}_1(x),\hat{u}_2(x))v_2,\ \ t>0,\ x\in\Bbb{R},\\
\bm{v}(0,\cdot)=\bm{\phi}\in\mathcal C.
\end{cases}
\end{equation}
Let $\{M(t)\}_{t\geq0}:\mathcal C_{\bm{\alpha}_1}\to\mathcal C_{\bm{\alpha}_1}$ be the solution map of \eqref{lin-1},
that is, $M(t)(\bm{\phi}):=\bm{v}(t,\cdot;\bm{\phi})$,
where $\bm{v}(t,\cdot;\bm{\phi})$ is the solution of \eqref{lin-1} with $\bm{v}(0,\cdot;\bm{\phi})=\bm{\phi}$.
For any given $\mu\in\Bbb{R}$, letting $\bm{v}(t,x)=e^{-\mu x}\bm{w}(t,x)$ in \eqref{lin-1}, then
$\bm{w}(t,x)=(w_1(t,x),w_2(t,x))$ satisfies the following $\mu$-parameterized linear system
\begin{equation}\label{lin-1-v}
\begin{cases}
\frac{\partial w_1}{\partial t}=L_1^*w_1-2\mu d_1\frac{\partial w_1}{\partial x}
+\left(d_1\mu^2+a_1^*\mu+\frac{\partial f_1}{\partial u_1}(x,\hat{u}_1,\hat{u}_2)\right)w_1
+\frac{\partial f_1}{\partial u_2}(x,\hat{u}_1,\hat{u}_2)w_2,\\
\frac{\partial w_2}{\partial t}=L_2^*w_2-2\mu d_2\frac{\partial w_2}{\partial x}
+\frac{\partial f_2}{\partial u_1}(x,\hat{u}_1,\hat{u}_2)w_1
+\left(d_2\mu^2+a_2^*\mu+\frac{\partial f_2}{\partial u_2}(x,\hat{u}_1,\hat{u}_2)\right)w_2,\\
\bm{w}(0,x)=\bm{\phi}(x)e^{\mu x}.
\end{cases}
\end{equation}
Let $\{M_{\mu}(t)\}_{t\geq0}$ be the solution map of system \eqref{lin-1-v},
that is, $M_{\mu}(t)(\bm{\phi}):=\bm{w}(t,\cdot;\bm{\phi})$,
where $\bm{w}(t,\cdot;\bm{\phi})$ is the unique solution of \eqref{lin-1-v} with $\bm{w}(0,\cdot;\bm{\phi})=\bm{\phi}$.
Noting that system \eqref{lin-1-v} is cooperative and irreducible,
it follows that $\{M_{\mu}(t)\}_{t\geq0}$ is strongly order preserving and
$$M_{\mu}(t)[\bm{\phi}](x)=e^{\mu x}M(t)[e^{-\mu x}\bm{\phi}](x),\ \forall t\geq0,\ x\in\Bbb{R},\ \bm{\phi}\in\mathcal C.$$
Substituting $\bm{w}(t,x)=e^{\lambda t}\bm{\phi}(x)$ into \eqref{lin-1-v},
we obtain the following periodic eigenvalue problem
\begin{equation}\label{ei}
\begin{cases}
\lambda\phi_1=d_1\phi_1^{\prime\prime}-(2d_1\mu+a_1^*)\phi_1^{\prime}
+\left(d_1\mu^2+a_1^*\mu+\frac{\partial f_1}{\partial u_1}(x,\hat{u}_1,\hat{u}_2)\right)\phi_1
+\frac{\partial f_1}{\partial u_2}(x,\hat{u}_1,\hat{u}_2)\phi_2,\\
\lambda\phi_2=d_2\phi_2^{\prime\prime}-(2d_2\mu+a_2^*)\phi_2^{\prime}
+\frac{\partial f_2}{\partial u_1}(x,\hat{u}_1,\hat{u}_2)\phi_1
+\left(d_2\mu^2+a_2^*\mu+\frac{\partial f_2}{\partial u_2}(x,\hat{u}_1,\hat{u}_2)\right)\phi_2,\\
\phi_i(x+L)=\phi_i(x),\ i=1,2.
\end{cases}
\end{equation}
Let $\lambda^+=\lambda^+(\mu)$ be the principal eigenvalue associated with a positive $L$-periodic eigenfunction
for the linear periodic cooperative and irreducible system \eqref{ei}, then a similar argument as in the proof of
\cite[Lemma 3.7]{liang2007} shows that $\lambda^+(\mu)$ is a convex function on $\mu\in\Bbb{R}$.
Furthermore, $\lambda^+(0)=\hat{\lambda}>0$, which together with \eqref{ei} yields that
$\mathop{\lim}\limits_{\mu\to0^+}\frac{\lambda^+(\mu)}{\mu}=\mathop{\lim}\limits_{\mu\to+\infty}\frac{\lambda^+(\mu)}{\mu}=+\infty$,
and hence $\frac{\lambda^+(\mu)}{\mu}$ attains its infimum at some $\mu\in(0,+\infty)$.
Similar observations hold for $\frac{\lambda^+(-\mu)}{\mu}$.

For any given $\varepsilon\in(0,1)$, let $\lambda^+_\varepsilon(\mu)$ be the principal eigenvalue of the following
linear periodic cooperative and irreducible system
\begin{equation*}
\begin{cases}
\lambda\phi_1=d_1\phi_1^{\prime\prime}-(2d_1\mu+a_1^*)\phi_1^{\prime}
+\left(d_1\mu^2+a_1^*\mu+\frac{\partial f_1}{\partial u_1}(x,\hat{u}_1,\hat{u}_2)-\varepsilon\right)\phi_1
+\frac{\partial f_1}{\partial u_2}(x,\hat{u}_1,\hat{u}_2)\phi_2,\\
\lambda\phi_2=d_2\phi_2^{\prime\prime}-(2d_2\mu+a_2^*)\phi_2^{\prime}
+\frac{\partial f_2}{\partial u_1}(x,\hat{u}_1,\hat{u}_2)\phi_1
+\left(d_2\mu^2+a_2^*\mu+\frac{\partial f_2}{\partial u_2}(x,\hat{u}_1,\hat{u}_2)-\varepsilon\right)\phi_2,\\
\phi_i(x+L)=\phi_i(x),\ i=1,2,
\end{cases}
\end{equation*}
and let $\{M_\varepsilon(t)\}_{t\geq0}$ be the solution map of the linear periodic system
\begin{equation*}
\begin{cases}
\frac{\partial \tilde{v}_1}{\partial t}=L_1^*\tilde{v}_1
+\left(\frac{\partial f_1}{\partial u_1}(x,\hat{u}_1(x),\hat{u}_2(x))-\varepsilon\right)\tilde{v}_1
+\frac{\partial f_1}{\partial u_2}(x,\hat{u}_1(x),\hat{u}_2(x))\tilde{v}_2,\ \ t>0,\ x\in\Bbb{R},\\
\frac{\partial \tilde{v}_2}{\partial t}=L_2^*\tilde{v}_2
+\frac{\partial f_2}{\partial u_1}(x,\hat{u}_1(x),\hat{u}_2(x))\tilde{v}_1
+\left(\frac{\partial f_2}{\partial u_2}(x,\hat{u}_1(x),\hat{u}_2(x))-\varepsilon\right)\tilde{v}_2,\ \ t>0,\ x\in\Bbb{R},\\
\bm{\tilde{v}}(0,\cdot)=\bm{\phi}\in\mathcal C.
\end{cases}
\end{equation*}
Let a vector $\bm{\sigma}\in\Bbb{R}_+^2$ be such that
for any $(\tilde{v}_1(t,x),\tilde{v}_2(t,x))\in[\bm{0},\bm{\sigma}]$ and $x\in[0,L]$, there is
$$|a_{12}^*(x)\tilde{v}_2(t,x)-a_{11}^*(x)\tilde{v}_1(t,x)|\leq\varepsilon
\text{~~and~~}|a_{22}^*(x)\tilde{v}_2(t,x)-a_{21}^*(x)\tilde{v}_1(t,x)|\leq\varepsilon.$$
Then we have
\begin{align*}
&f_1(x,\hat{u}_1(x),\hat{u}_2(x))-f_1(x,\hat{u}_1(x)-\tilde{v}_1(t,x),\hat{u}_2(x)-\tilde{v}_2(t,x))\\
&\quad-\frac{\partial f_1}{\partial u_1}(x,\hat{u}_1(x),\hat{u}_2(x))\tilde{v}_1(t,x)-
\frac{\partial f_1}{\partial u_2}(x,\hat{u}_1(x),\hat{u}_2(x))\tilde{v}_2(t,x)\\
&=a_{11}^*(x)\tilde{v}_1^2(t,x)-a_{12}^*(x)\tilde{v}_1(t,x)\tilde{v}_2(t,x)\\
&\geq-\varepsilon\tilde{v}_1(t,x),
\end{align*}
and
\begin{align*}
&f_2(x,\hat{u}_1(x),\hat{u}_2(x))-f_2(x,\hat{u}_1(x)-\tilde{v}_1(t,x),\hat{u}_2(x)-\tilde{v}_2(t,x))\\
&\quad-\frac{\partial f_2}{\partial u_1}(x,\hat{u}_1(x),\hat{u}_2(x))\tilde{v}_1(t,x)-
\frac{\partial f_2}{\partial u_2}(x,\hat{u}_1(x),\hat{u}_2(x))\tilde{v}_2(t,x)\\
&=a_{21}^*(x)\tilde{v}_1(t,x)\tilde{v}_2(t,x)-a_{22}^*(x)\tilde{v}_2^2(t,x)\\
&\geq-\varepsilon\tilde{v}_2(t,x).
\end{align*}
Since $\bm{\tilde{v}}(t,x;\bm{0})=\bm{0}$,
there exists a positive vector $\bm{\eta}\in\Bbb{R}_+^2$ with $\bm{\eta}\leq\bm{\alpha}_1$ such that
for any $\bm{\phi}\in\mathcal C_{\bm{\eta}}$,
$\bm{\tilde{v}}(t,x;\bm{\phi})\in[\bm{0},\bm{\sigma}]$ for any $x\in\Bbb{R}$ and $t\in[0,1]$,
where $\bm{\tilde{v}}(t,x;\bm{\phi})$ is the unique solution of \eqref{u1u2-linearize+}.
By the comparison principle, there holds
$$\tilde{Q}_t(\bm{\phi})\geq M_\varepsilon(t)(\bm{\phi}),\ \forall\bm{\phi}\in\mathcal C_{\bm{\eta}},\ t\in[0,1].$$
In particular, $\tilde{Q}_1(\bm{\phi})\geq M_\varepsilon(1)(\bm{\phi})$ for any $\bm{\phi}\in\mathcal C_{\bm{\eta}}$.
It follows from \cite[Theorem 2.4]{weinberger2003} that
$$c_+^{**}(\bm{0},\bm{\alpha}_1)\geq\mathop{\inf}\limits_{\mu>0}\frac{\lambda^+_\varepsilon(\mu)}{\mu},\ \ \forall\varepsilon\in(0,1).$$
Letting $\varepsilon\to0$ in the above inequality, we have
$$c_+^{**}(\bm{0},\bm{\alpha}_1)\geq\mathop{\inf}\limits_{\mu>0}\frac{\lambda^+(\mu)}{\mu}.$$
By the change of variable $\bm{\tilde{w}}(t,x)=\bm{\tilde{u}}(t,-x)$ in system \eqref{u1u2-linearize-},
it follows that $c_-^{**}(\bm{0},\bm{\beta})$ is the rightward spreading speed of the resulting system for $\bm{\tilde{w}}$.
The similar argument as above implies that
$$c_-^{**}(\bm{0},\bm{\beta})\geq\mathop{\inf}\limits_{\mu>0}\frac{\lambda^+(-\mu)}{\mu}.$$
Let $\mu_1>0$ and $\mu_2>0$ be such that
$$c_+^{**}(\bm{0},\bm{\alpha}_1)\geq\mathop{\inf}\limits_{\mu>0}\frac{\lambda^+(\mu)}{\mu}=\frac{\lambda^+(\mu_1)}{\mu_1},\quad
c_-^{**}(\bm{0},\bm{\beta})\geq\mathop{\inf}\limits_{\mu>0}\frac{\lambda^+(-\mu)}{\mu}=\frac{\lambda^+(-\mu_2)}{\mu_2}.$$
Define $\theta:=\frac{\mu_1}{\mu_1+\mu_2}\in(0,1)$, then $\theta\mu_1+(1-\theta)(-\mu_2)=0$.
The convexity of $\lambda^+(\mu)$ then shows that
\begin{align*}
c_+^{**}(\bm{0},\bm{\alpha}_1)+c_-^{**}(\bm{0},\bm{\beta})&\geq\frac{\lambda^+(\mu_1)}{\mu_1}+\frac{\lambda^+(-\mu_2)}{\mu_2}
=\frac{\mu_1+\mu_2}{\mu_1\mu_2}[\theta\lambda^+(\mu_1)+(1-\theta)\lambda^+(-\mu_2)]\\
&\geq\frac{\mu_1+\mu_2}{\mu_1\mu_2}[\lambda^+(\theta\mu_1+(1-\theta)(-\mu_2))]\\
&=\frac{\mu_1+\mu_2}{\mu_1\mu_2}\lambda^+(0)=\frac{\mu_1+\mu_2}{\mu_1\mu_2}\hat{\lambda}\\
&>0.
\end{align*}
The proof is complete.
\end{proof}

Now we consider the leftward propagation dynamics of $\{Q_t\}_{t\geq0}$ restricted on $\mathcal C_{[\bm{\alpha}_2,\bm{1}]}$.
Let $\mathcal C_1=\{u\in\mathcal C: 0\leq u\leq 1\}$, where $\mathcal C$ denotes
the set of all bounded and continuous functions from $\Bbb R$ to $\Bbb R$.
Since $\bm{\alpha}_2=(0,1)$, we can easily see that this dynamics is equivalent
to that of the following periodic scalar equation restricted on $\mathcal C_1$:
\begin{equation}\label{sca-2}
u_t=L_1^*u+a_{11}^*(x)u(1-u),
\end{equation}
where $0$ is unstable and $1$ is stable steady states of \eqref{sca-2}.
Since $\lambda_0(d_1,a_1^*,a_{11}^*)>0$, we see that
\eqref{sca-2} admits a leftward spreading speed (also the minimal rightward wave speed, see, e.g., \cite{berestycki2002})
$$c_{1-}^*=\mathop{\inf}\limits_{\mu>0}\frac{\lambda_1(-\mu)}{\mu},$$
where $\lambda_1(\mu)$ is the principal eigenvalue of the following periodic eigenvalue problem
\begin{equation}\label{-}
\begin{cases}
\lambda\phi=d_1\phi^{\prime\prime}-(2d_1\mu+a_1^*)\phi^{\prime}+(d_1\mu^2+a_1^*\mu+a_{11}^*)\phi,\ x\in\Bbb{R},\\
\phi(x+L)=\phi(x),\ x\in\Bbb{R},
\end{cases}
\end{equation}
with $\lambda_1(0)=\lambda_0(d_1,a_1^*,a_{11}^*)>0$. Then we have $c_-^*(\bm{\alpha}_2,\bm{1})\geq c_{1-}^*$.
Similarly, the rightward propagation dynamics of $\{Q_t\}_{t\geq0}$ restricted on $\mathcal C_{[\bm{0},\bm{\alpha}_2]}$
is equivalent to that of the following periodic scalar equation restricted on $\mathcal C_1$:
\begin{equation*}
v_t=L_2^*v-a_{22}^*(x)v(1-v),
\end{equation*}
where $\mathcal C_1=\{u\in\mathcal C: 0\leq u\leq 1\}$, and $\mathcal C$ denotes
the set of all bounded and continuous functions from $\Bbb R$ to $\Bbb R$.
Let $w(t,x)=1-v(t,x)$, then $w$ satisfies the periodic scalar equation
\begin{equation}\label{sca-1}
w_t=L_2^*w+a_{22}^*(x)w(1-w),
\end{equation}
where $0$ is unstable and $1$ is stable steady states of \eqref{sca-1}.
In view of $\lambda_0(d_2,a_2^*,a_{22}^*)>0$,
a similar argument as in the proof of \cite[Theorem 1.1]{berestycki20051}
shows that \eqref{sca-1} admits a rightward spreading speed (also the minimal rightward wave speed)
$$c_{2+}^*=\mathop{\inf}\limits_{\mu>0}\frac{\lambda_2(\mu)}{\mu},$$
where $\lambda_2(\mu)$ is the principal eigenvalue of the following periodic eigenvalue problem
\begin{equation}\label{+}
\begin{cases}
\lambda\phi=d_2\phi^{\prime\prime}-(2d_2\mu+a_2^*)\phi^{\prime}+(d_2\mu^2+a_2^*\mu+a_{22}^*)\phi,\ x\in\Bbb{R},\\
\phi(x+L)=\phi(x),\ x\in\Bbb{R},
\end{cases}
\end{equation}
with $\lambda_2(0)=\lambda_0(d_2,a_2^*,a_{22}^*)>0$. Then we have $c_+^{*}(\bm{0},\bm{\alpha}_2)\geq c_{2+}^*$.
We finally need the following assumption to complete the verification of (A6) in \cite{fang2011}.
\begin{itemize}
\item[(B2)] $c_{1-}^*+c_{2+}^*>0$, where $c_{1-}^*$ and $c_{2+}^*$ are the leftward and rightward spreading speeds of
\eqref{sca-2} and \eqref{sca-1}, respectively.
\end{itemize}

\begin{remark}\rm
In the case that
\begin{itemize}
\item[(i)] either $L_i^*u=(d_i(x)u_x)_x$ with $d_i\in C^{1+\nu}(\Bbb{R})$,
\item[(ii)] or $d_i(\cdot),\ a_{ii}^*(\cdot)$ are even, and $a_i^*(\cdot)$ are odd for $i=1,2$,
\end{itemize}
(A1) together with (A2) guarantees (B2). Indeed, in the either of the two cases (i) and (ii),
we know from \cite[Lemma 5.1]{Yu2017} that $\lambda_i(\mu),\ i=1,2$ are even and convex functions
of $\mu$ in $\Bbb{R}$, which together with the fact
$\lambda_1(0)=\lambda_0(d_1,a_1^*,a_{11}^*)>0$ and $\lambda_2(0)=\lambda_0(d_2,a_2^*,a_{22}^*)>0$ yields that
$\lambda_i(\mu)>\lambda_i(0)>0$ for any $\mu>0$ and $i=1,2$. Therefore, we can easily see
$c_{1-}^*=\mathop{\inf}\limits_{\mu>0}\frac{\lambda_1(-\mu)}{\mu}>0$ and
$c_{2+}^*=\mathop{\inf}\limits_{\mu>0}\frac{\lambda_2(\mu)}{\mu}>0$, which directly leads to (B2).
One should also note from \eqref{-}, \eqref{+} and the representations of $c_{1-}^*$ and $c_{2+}^*$ that,
(B2) could not hold generally if the advection terms $a_1^*$ and $a_2^*$ being very large opposite constants.
\end{remark}

Combining Lemma \ref{coun-1} and the above argument, we easily obtain the following result.
\begin{proposition}\label{count}
Assume (A1)-(A2) and (B1)-(B2). Then for any $\bm{\alpha}\in E\setminus\{\bm{0},\bm{1}\}$, there holds
$c_+^*(\bm{0},\bm{\alpha})+c_-^*(\bm{\alpha},\bm{1})>0$.
\end{proposition}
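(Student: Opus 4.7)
The plan is to observe that Proposition \ref{count} follows almost immediately by assembling the preparatory work already done, and to split the argument according to which element of $E\setminus\{\bm{0},\bm{1}\}$ we are considering. By the classification given just before Lemma \ref{coun-1}, we have $E=\hat{E}\cup\{\bm{0},\bm{\alpha}_2,\bm{1}\}$ with $\bm{\alpha}_2=(0,1)$, so $E\setminus\{\bm{0},\bm{1}\}=\hat{E}\cup\{\bm{\alpha}_2\}$. I will therefore treat the two cases $\bm{\alpha}\in\hat{E}$ and $\bm{\alpha}=\bm{\alpha}_2$ separately.

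For the first case, fix any periodic coexistence steady state $\bm{\alpha}=\bm{\alpha}_1=(\hat{u}_1,\hat{u}_2)\in\hat{E}$ and set $\bm{\beta}=\bm{1}-\bm{\alpha}_1\gg\bm{0}$. The discussion preceding Lemma \ref{coun-1} constructs two auxiliary semiflows $\{\tilde{Q}_t\}_{t\geq0}$ on $\mathcal C_{\bm{\alpha}_1}$ and $\{\tilde{P}_t\}_{t\geq0}$ on $\mathcal C_{\bm{\beta}}$, shows that each satisfies hypotheses (E1)--(E5) of \cite{liang2010}, and concludes from \cite[Theorem 5.1]{liang2010} that they admit, respectively, a rightward spreading speed $c_+^{**}(\bm{0},\bm{\alpha}_1)$ and a leftward spreading speed $c_-^{**}(\bm{0},\bm{\beta})$ satisfying
\begin{equation*}
c_+^{*}(\bm{0},\bm{\alpha}_1)\geq c_+^{**}(\bm{0},\bm{\alpha}_1),\qquad c_-^{*}(\bm{\alpha}_1,\bm{1})\geq c_-^{**}(\bm{0},\bm{\beta}).
\end{equation*}
Adding these two inequalities and invoking Lemma \ref{coun-1} directly yields $c_+^*(\bm{0},\bm{\alpha}_1)+c_-^*(\bm{\alpha}_1,\bm{1})\geq c_+^{**}(\bm{0},\bm{\alpha}_1)+c_-^{**}(\bm{0},\bm{\beta})>0$, which is exactly what is required.

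For the second case, consider $\bm{\alpha}=\bm{\alpha}_2=(0,1)$. The discussion in the paragraphs immediately preceding assumption (B2) reduces the leftward propagation of $\{Q_t\}_{t\geq0}$ on $\mathcal C_{[\bm{\alpha}_2,\bm{1}]}$ to the scalar periodic KPP equation \eqref{sca-2} with leftward spreading speed $c_{1-}^*$, and the rightward propagation on $\mathcal C_{[\bm{0},\bm{\alpha}_2]}$ (via the substitution $w=1-v$) to the scalar periodic KPP equation \eqref{sca-1} with rightward spreading speed $c_{2+}^*$. This gives
\begin{equation*}
c_+^{*}(\bm{0},\bm{\alpha}_2)\geq c_{2+}^*,\qquad c_-^{*}(\bm{\alpha}_2,\bm{1})\geq c_{1-}^*.
\end{equation*}
Assumption (B2) then supplies the strict positivity $c_{1-}^*+c_{2+}^*>0$, and summing the two inequalities above gives $c_+^*(\bm{0},\bm{\alpha}_2)+c_-^*(\bm{\alpha}_2,\bm{1})>0$.

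Since $E\setminus\{\bm{0},\bm{1}\}$ is exhausted by these two cases, the proposition follows. There is no real obstacle here: all the heavy lifting (the convex Perron root argument used in Lemma \ref{coun-1}, the verification of the Liang--Zhao hypotheses for the auxiliary semiflows, and the scalar KPP reduction leading to (B2)) has already been completed above, and this proposition is essentially a bookkeeping step that packages those ingredients into the counter-propagation condition required to apply the abstract theory of Fang and Zhao \cite{fang2011}.
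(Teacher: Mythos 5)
Your proposal is correct and follows exactly the same route as the paper: the paper explicitly states that the proposition is obtained by "combining Lemma \ref{coun-1} and the above argument," which is precisely the two-case split you carry out (coexistence states $\hat{E}$ handled via Lemma \ref{coun-1} and the inequalities $c_\pm^*\geq c_\pm^{**}$, and $\bm{\alpha}_2=(0,1)$ handled via the scalar KPP reductions and assumption (B2)).
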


Now we are ready to state our main results of this section.

\begin{theorem}\label{main}
Assume that (A1)-(A2) and (B1)-(B2). Then there exists some $c\in\Bbb{R}$ such that system \eqref{u1u2-0-1} admits a pulsating front
$\bm{U}(x,x+ct)=(U_1(x,x+ct),U_2(x,x+ct))$ satisfying $\mathop{\lim}\limits_{z\to-\infty}\bm{U}(x,z)=\bm{0}$ and
$\mathop{\lim}\limits_{z\to+\infty}\bm{U}(x,z)=\bm{1}$ uniformly for $x\in\Bbb{R}$.
Furthermore, $\bm{U}(x,z)$ is increasing in $z$ for any $x\in\Bbb{R}$ if $c\neq 0$.
\end{theorem}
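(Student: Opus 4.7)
My plan is to obtain Theorem \ref{main} by invoking the abstract bistable pulsating-front existence theorem of Fang and Zhao (Theorem 4.2 in \cite{fang2011}) for the periodic semiflow $\{\hat Q_t\}_{t\geq 0}$ on $\mathcal{X}_{\bm{1}}$ constructed above. Nearly all the required hypotheses (A1)-(A6) of \cite{fang2011} have already been verified earlier in Section 2, so the real work is to package them correctly.

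First I would record that $\hat Q_t$ is monotone, continuous, and compact with respect to the compact-open topology (from the properties of the semigroup $T(t)$ and the integral formulation \eqref{Int}) and $L$-translation-invariant by construction from $Q_t$. This gives (A1)-(A4) of \cite{fang2011}. The bistability assumption (A5) of \cite{fang2011} --- strong stability of $\bm 0$ and $\bm 1$ and strong instability of every intermediate coexistence steady state --- is exactly Lemma \ref{0-1}, transferred to $\{\hat Q_t\}$ via its topological conjugacy with $\{Q_t\}$. The counter-propagation assumption (A6) is precisely Proposition \ref{count}. Applying \cite[Theorem 4.2]{fang2011} then yields a speed $c\in\Bbb R$ and a non-decreasing profile $\hat{\bm U}\in\mathcal{X}_{\bm{1}}$ with $\hat Q_t[\hat{\bm U}](\cdot) = \hat{\bm U}(\cdot - ct)$ and $\lim_{s\to -\infty}\hat{\bm U}(s)=\bm 0$, $\lim_{s\to +\infty}\hat{\bm U}(s)=\bm 1$. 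Unwrapping via $\bm U(x,z):=\hat{\bm U}(z)(\theta_x)$ for $x=n_xL+\theta_x$ produces the pulsating front $\bm u(t,x)=\bm U(x,x+ct)$ with the required $L$-periodicity in $x$ and the limits $\bm 0,\bm 1$ as $z\to\mp\infty$, uniformly in $x$.

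To upgrade ``non-decreasing in $z$'' to \emph{strict} monotonicity when $c\neq 0$, I would apply the parabolic strong maximum principle (the cooperative system \eqref{u1u2-0-1} restricted to $u_1\geq 0$, $0\leq u_2\leq 1$ is irreducible) to the difference $\bm w(t,x):=\bm u(t,x+h)-\bm u(t,x)$ for any fixed $h>0$. This difference is non-negative by the already-known monotonicity of $\bm U(x,\cdot)$ and satisfies a cooperative linear parabolic system obtained from the mean value theorem, so either $\bm w\equiv\bm 0$ or $\bm w\gg\bm 0$ for $t>0$. The first alternative would force $\bm U$ to be constant in its second argument, contradicting the asymptotic values $\bm 0$ and $\bm 1$ whenever $c\neq 0$; hence $\bm w\gg\bm 0$, yielding strict monotonicity of $\bm U(x,\cdot)$.

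The step I expect to be the most delicate is not the invocation of the abstract theorem itself but the careful reduction of $\{Q_t\}$ on the spatially heterogeneous configuration space $\mathcal C_{\bm 1}$ to the $L\Bbb Z$-equivariant semiflow $\{\hat Q_t\}$ on $\mathcal{X}_{\bm 1}$ --- this is precisely the reason the auxiliary spaces $\mathcal X$, $\mathcal{X}_{\bm\beta}$ and $\mathcal K_{\bm\beta}$ were introduced earlier in this section. Once that conjugacy, together with the propagation bound of Proposition \ref{count}, is in hand, the remainder of the argument is essentially bookkeeping against the hypotheses of \cite[Theorem 4.2]{fang2011}.
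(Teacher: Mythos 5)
The existence part of your argument follows the paper's route: verify the Fang--Zhao hypotheses (monotonicity, compactness, the bistability structure from Lemma~\ref{0-1}, the counter-propagation bound of Proposition~\ref{count}), then invoke their abstract theorem. The paper cites \cite[Theorem 4.1]{fang2011} rather than Theorem~4.2, but that is a bookkeeping matter.

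The strict-monotonicity step, however, has a genuine gap. You propose to apply the strong maximum principle to the spatial difference $\bm w(t,x):=\bm u(t,x+h)-\bm u(t,x)$ for $h>0$ and assert that $\bm w\geq\bm 0$ follows from the nondecreasing profile $\bm U(x,\cdot)$. That is false for a pulsating front. Since $\bm u(t,x)=\bm U(x,x+ct)$, one has $\bm u(t,x+h)=\bm U(x+h,x+h+ct)$, and both arguments of $\bm U$ change under the shift. Monotonicity of $\bm U(x,\cdot)$ in its second argument compares $\bm U(x,x+h+ct)$ with $\bm U(x,x+ct)$, not $\bm U(x+h,x+h+ct)$ with $\bm U(x,x+ct)$; these coincide only when $h\in L\Bbb{Z}$, in which case the $L$-periodicity of $\bm U$ in the first argument makes them equal. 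For generic $h>0$ the sign of $\bm w$ is uncontrolled, so the maximum principle cannot be started. (Your reasoning is the standard argument for ordinary traveling waves $U(x+ct)$, but it does not transfer to the pulsating setting.)

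The way around this --- and the one the paper uses --- is to shift in \emph{time} by $\tau/c$ (this is precisely where $c\neq 0$ enters), so that the difference
$$\bm r(t,x)=\bm u\bigl(t+\tfrac{\tau}{c},x\bigr)-\bm u(t,x)
=\bm U(x,x+ct+\tau)-\bm U(x,x+ct)$$
is genuinely nonnegative for every $\tau>0$, because now the first argument of $\bm U$ is unchanged. One then applies the strong maximum principle to $(r_1,r_2)$, uses uniqueness of the Cauchy problem to propagate the vanishing backwards and forwards in time, exploits $f_{1,u_2},f_{2,u_1}\geq 0$ to couple the two components, and rules out $\bm r\equiv\bm 0$ by comparing the limits of $\bm u(k\tau/c,\cdot)$ at $k\to\pm\infty$ against the boundary values $\bm 0$ and $\bm 1$. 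With the spatial shift replaced by this time shift, the rest of your outline (dichotomy from the strong maximum principle for the cooperative irreducible system, contradiction with the asymptotic states) goes through as you describe.
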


\begin{proof}
In view of Proposition \ref{count}, it follows from \cite[Theorem 4.1]{fang2011} that system \eqref{u1u2-0-1} admits a pulsating front
$\bm{U}(x,x+ct)=(U_1(x,x+ct),U_2(x,x+ct))$ connecting $\bm{0}$ to $\bm{1}$, that is,
\begin{equation}\label{limit}
\mathop{\lim}\limits_{z\to-\infty}\bm{U}(x,z)=\bm{0},\quad \mathop{\lim}\limits_{z\to+\infty}\bm{U}(x,z)=\bm{1},
\text{~~uniformly for~~} x\in\Bbb{R}.
\end{equation}
Besides, $\bm{U}(x,z)$ is nondecreasing in $z$.
Next we prove that $\bm{U}(\cdot,z)$ is strictly increasing.
Let $$(u_1(t,x),u_2(t,x)):=(U_1(x,x+ct),U_2(x,x+ct)).$$
For any fixed $\tau>0$, define
$$(r_1(t,x),r_2(t,x))=\left(u_1\left(t+\frac{\tau}{c},x\right)-u_1(t,x),u_2\left(t+\frac{\tau}{c},x\right)-u_2(t,x)\right).$$
Then $(r_1(t,x),r_2(t,x))$ is nonnegative in $(t,x)\in\Bbb{R}\times\Bbb{R}$ satisfying
\begin{equation*}
\begin{cases}
\frac{\partial r_1}{\partial t}
=L_1^*r_1+f_1\left(x,u_1\left(t+\frac{\tau}{c},x\right),u_2\left(t+\frac{\tau}{c},x\right)\right)-f_1(x,u_1(t,x),u_2(t,x)),\\
\frac{\partial r_2}{\partial t}
=L_2^*r_2+f_2\left(x,u_1\left(t+\frac{\tau}{c},x\right),u_2\left(t+\frac{\tau}{c},x\right)\right)-f_2(x,u_1(t,x),u_2(t,x)),
\end{cases}
(t,x)\in\Bbb{R}\times\Bbb{R}.
\end{equation*}
Assume now that there exists some $(\bar{t},\bar{x})\in\Bbb{R}\times\Bbb{R}$
such that either $r_1(\bar{t},\bar{x})=0$ or $r_2(\bar{t},\bar{x})=0$. Since $f_{1,u_2}, f_{2,u_1}\geq 0$, one have
$$
\frac{\partial r_1}{\partial t}\geq L_1^*r_1+r_1\left[\int_0^1{f_{1,u_1}\left(x,su_1\left(t+\frac{\tau}{c},x\right)
+(1-s)u_1(t,x),su_2\left(t+\frac{\tau}{c},x\right)+(1-s)u_2(t,x)\right)}ds\right]
$$
and
$$
\frac{\partial r_2}{\partial t}\geq L_2^*r_2+r_2\left[\int_0^1{f_{2,u_2}\left(x,su_1\left(t+\frac{\tau}{c},x\right)
+(1-s)u_1(t,x),su_2\left(t+\frac{\tau}{c},x\right)+(1-s)u_2(t,x)\right)}ds\right].
$$
It then follows from the (strong) maximum principle that either $r_1\equiv0$ or $r_2\equiv0$ in $(-\infty,\bar{t}]\times\Bbb{R}$,
and then in $\Bbb{R}\times\Bbb{R}$ by uniqueness of the Cauchy problem associated to \eqref{u1u2-0-1}, which together with the fact
$f_{1,u_2}\geq 0$ and $f_{2,u_1}\geq 0$ yields that $(r_1,r_2)\equiv(0,0)$ in $\Bbb{R}\times\Bbb{R}$. In particular,
$$\left(u_1\left(\frac{k\tau}{c},x\right),u_2\left(\frac{k\tau}{c},x\right)\right)=(u_1(0,x),u_2(0,x)),\ \forall x\in\Bbb{R},\ \forall k\in\Bbb{Z}.$$
On the other hand, since for each $x\in\Bbb{R}$,
$$\mathop{\lim}\limits_{k\to-\infty}\left(u_1\left(\frac{k\tau}{c},x\right),u_2\left(\frac{k\tau}{c},x\right)\right)=(0,0),\quad
\mathop{\lim}\limits_{k\to+\infty}\left(u_1\left(\frac{k\tau}{c},x\right),u_2\left(\frac{k\tau}{c},x\right)\right)=(1,1),$$
due to \eqref{limit} and $\tau>0$. One has reached a contradiction. Then $(r_1(t,x),r_2(t,x))>(0,0)$ in $(t,x)\in\Bbb{R}\times\Bbb{R}$, and
therefore $(U_1(x,z+\tau),U_2(x,z+\tau))>(U_1(x,z),U_2(x,z))$ for any $(x,z)\in\Bbb{R}\times\Bbb{R}$ and $\tau>0$. The proof is complete.
\end{proof}

\begin{remark}\rm
The pulsating fronts established in Theorem \ref{main} are leftward propagating. Under assumption that $c_{1+}^*+c_{2-}^*>0$,
where $c_{1+}^*$ and $c_{2-}^*$ are the rightward and leftward spreading speeds of \eqref{sca-2} and \eqref{sca-1}, respectively,
we can similarly obtain a rightward pulsating front $\bm{V}(x,x-ct)=\bm{V}(x,z)$ connecting $\bm{1}$ to $\bm{0}$,
which turned out to be decreasing in $z$ for any $x\in\Bbb{R}$.
\end{remark}

\section{Sub- and supersolutions}

In this section, we first give a comparison lemma to an auxiliary system,
which admits the comparison principle in a larger interval and coincides with system \eqref{u1u2-0-1}
in $[\bm{0},\bm{1}]$, since the sub- and supersolutions constructed later
may be unbounded from below by $\bm{0}$ and above by $\bm{1}$.
Then we construct a pair of sub-super solutions preparing for the next section.

Consider the following auxiliary system
\begin{equation}\label{F1F2}
\begin{cases}
\frac{\partial u_1}{\partial t}=d_1(x)\frac{\partial u_1^2}{\partial x^2}-a_1^*(x)\frac{\partial u_1}{\partial x}+F_1(x,u_1,u_2),\\
\frac{\partial u_2}{\partial t}=d_2(x)\frac{\partial u_2^2}{\partial x^2}-a_2^*(x)\frac{\partial u_2}{\partial x}+F_2(x,u_1,u_2),
\end{cases}
t\in\Bbb{R}^+,\ x\in\Bbb{R},
\end{equation}
where for any $(u_1,u_2)\in[\bm{-1},\bm{2}]$,
\begin{align*}
F_1(x,u_1,u_2)&:=f_1(x,u_1,u_2)+D_1\{u_1\}^-u_2,\\
F_2(x,u_1,u_2)&:=f_2(x,u_1,u_2)+D_2\{1-u_2\}^-(u_1-1),
\end{align*}
$\{w\}^-:=\max\left\{-w,0\right\}$, $D_1=\mathop{\max}\limits_{x\in[0,L]}a_{12}^*(x)$ and  $D_2=\mathop{\max}\limits_{x\in[0,L]}a_{21}^*(x)$.
We first give the definition of sub- and supersolutions of \eqref{F1F2} as following.

\begin{definition}
A pair of continuous functions $\bm{w}(t,x)=(u_1(t,x),u_2(t,x))$ is said to be a supersolution (resp. subsolution)
of \eqref{F1F2} in $(t,x)\in\Bbb R^+\times\Bbb R$,
if $\bm{w}\in C^{1,2}((0,\infty)\times\Bbb{R},\Bbb{R}^2)$ and
\begin{equation*}
\begin{cases}
u_{1,t}-d_1(x)\frac{\partial u_1^2}{\partial x^2}+a_1^*(x)\frac{\partial u_1}{\partial x}-F_1(x,u_1,u_2)\geq 0 ~(resp. \leq0),~~(t,x)\in\Bbb R^+\times\Bbb R,\\
u_{2,t}-d_2(x)\frac{\partial u_2^2}{\partial x^2}+a_2^*(x)\frac{\partial u_2}{\partial x}-F_2(x,u_1,u_2)\geq 0 ~(resp. \leq0),~~(t,x)\in\Bbb R^+\times\Bbb R.
\end{cases}
\end{equation*}
\end{definition}

Similar to \cite[Theorem 2.2 and Corollary 2.3]{wang2012} (see also \cite[Theorem 3.2]{bao2013}),
we have the following comparison lemma.

\begin{lemma}\label{sub-super method}
\rm(i)
Suppose that $\bm{w}^+(t,x)$ and $\bm{w}^-(t,x)$ are super- and subsolutions of \eqref{F1F2} in
$(t,x)\in\Bbb R^+\times\Bbb R$, respectively, $\bm{-1}\leq\bm{w}^\pm(t,x)\leq\bm{2}$.
If $\bm{w}^-(0,x)\leq\bm{w}^+(0,x)$ for any $x\in\Bbb R$,
then $\bm{w}^-(t,x)\leq\bm{w}^+(t,x)$ for any $(t,x)\in\Bbb R^+\times\Bbb R$.

\rm(ii)
For any $\bm{w}_0(\cdot)\in C(\Bbb{R},[\bm{0},\bm{1}])$, if
$\bm{w}^-(0,\cdot)\leq\bm{w}_0(\cdot)\leq\bm{w}^+(0,\cdot)$ for any $x\in\Bbb R$,
then
$\bm{w}^-(t,x)\leq\bm{w}(t,x;\bm{w}_0)\leq\bm{w}^+(t,x)$ and
$\bm{0}\leq\bm{w}(t,x;\bm{w}_0)\leq\bm{1}$ for any $(t,x)\in\Bbb R^+\times\Bbb R$,
where
$\bm{w}(t,x;\bm{w}_0)$ is the unique classical solution of \eqref{F1F2} with $\bm{w}(0,x;\bm{w}_0)=\bm{w}_0$.

\rm(iii)
For any $\bm{w}_0(\cdot)\in C(\Bbb{R},[\bm{0},\bm{1}])$, the unique classical solution
$\bm{w}(t,x;\bm{w}_0)$ of \eqref{F1F2} with $\bm{w}(0,x;\bm{w}_0)=\bm{w}_0$ is also a
classical solution of \eqref{u1u2-0-1}.
\end{lemma}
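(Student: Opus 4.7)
The proof proceeds in three stages, resting centrally on the observation that the passage from $f_i$ to $F_i$ extends the cooperative structure of \eqref{u1u2-0-1} from $[\bm{0},\bm{1}]$ to the larger box $[\bm{-1},\bm{2}]$. To prove (i), I will first verify cooperativity by a case analysis across $u_1=0$: one has
\[
\frac{\partial F_1}{\partial u_2}(x,u_1,u_2)=a_{12}^*(x)u_1+D_1\max\{-u_1,0\}\geq 0
\]
on $[\bm{-1},\bm{2}]$, since on $\{u_1<0\}$ this expression equals $(a_{12}^*(x)-D_1)u_1\geq 0$ by the choice $D_1\geq a_{12}^*(x)$, while on $\{u_1\geq 0\}$ it reduces to $a_{12}^*(x)u_1\geq 0$. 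A symmetric case analysis across $u_2=1$ gives $\partial_{u_1}F_2\geq 0$. Because $\{\cdot\}^-$ is $1$-Lipschitz, $F_1,F_2$ are globally Lipschitz on $[\bm{-1},\bm{2}]$. Setting $\bm{z}:=\bm{w}^+-\bm{w}^-$ and applying an integral mean-value identity to $F_i(\bm{w}^+)-F_i(\bm{w}^-)$ yields a weakly coupled linear parabolic system for $\bm{z}$ on $\Bbb{R}^+\times\Bbb{R}$ with bounded coefficients and nonnegative off-diagonal couplings. Since $\bm{w}^{\pm}$ are bounded, so is $\bm{z}$, and $\bm{z}(0,\cdot)\geq\bm{0}$; the standard comparison principle for bounded solutions of cooperative weakly coupled parabolic systems on $\Bbb{R}$ (in the form used in \cite{wang2012}) then gives $\bm{z}\geq\bm{0}$ throughout $\Bbb{R}^+\times\Bbb{R}$.

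For (ii) I will first observe that both $\bm{0}$ and $\bm{1}$ are equilibria of \eqref{F1F2}: direct substitution shows $F_i(x,0,0)=0=F_i(x,1,1)$ because in each case both $f_i$ and the correction term vanish (for $\bm{1}$, the truncations $\{1\}^-$ and $\{0\}^-$ are zero). Standard semilinear parabolic theory, applied to \eqref{F1F2} with its uniformly elliptic linear parts and Lipschitz nonlinearity, produces a unique local-in-time classical solution $\bm{w}(\cdot,\cdot;\bm{w}_0)$ for any $\bm{w}_0\in C(\Bbb{R},[\bm{0},\bm{1}])$. Using (i) with $(\bm{0},\bm{1})$ in place of $(\bm{w}^-,\bm{w}^+)$ traps this solution in $[\bm{0},\bm{1}]$ on its existence interval, which supplies the a priori bound needed to extend the solution to all of $\Bbb{R}^+$. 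A second application of (i), now with the prescribed $\bm{w}^{\pm}$, produces the sandwich $\bm{w}^-(t,x)\leq\bm{w}(t,x;\bm{w}_0)\leq\bm{w}^+(t,x)$.

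Part (iii) is then essentially a tautology: once (ii) confines $\bm{w}(t,x;\bm{w}_0)$ to $[\bm{0},\bm{1}]$, the truncations $\{u_1\}^-$ and $\{1-u_2\}^-$ vanish identically along the trajectory, so $F_i\equiv f_i$ there and the equation \eqref{F1F2} collapses pointwise to \eqref{u1u2-0-1}. The principal difficulty lies in (i): because the comparison is on an unbounded spatial domain, one needs a uniform bound on the coefficients of the linearized system for $\bm{z}$ to rule out pathological growth at infinity, and this is exactly what the blanket hypothesis $\bm{-1}\leq\bm{w}^{\pm}\leq\bm{2}$ provides. The cooperative extension from $f_i$ to $F_i$ is engineered precisely so that this bounded-coefficient cooperative maximum principle works on a box strictly larger than $[\bm{0},\bm{1}]$, which will be essential in the following section when the sub- and supersolutions constructed by the authors may temporarily overshoot $\bm{0}$ from below or $\bm{1}$ from above.
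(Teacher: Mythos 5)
Your argument is correct and supplies the standard proof that the paper itself elides: Lemma 3.2 is stated with only a pointer to \cite[Theorem 2.2, Corollary 2.3]{wang2012} (see also \cite[Theorem 3.2]{bao2013}), and those references proceed along exactly the lines you reconstruct --- the truncation terms make $(F_1,F_2)$ cooperative on the box $[\bm{-1},\bm{2}]$, the bounded-coefficient cooperative comparison principle on $\Bbb{R}^+\times\Bbb{R}$ gives (i), the equilibria $\bm{0}$ and $\bm{1}$ used through (i) give the invariance of $[\bm{0},\bm{1}]$ in (ii), and the truncations vanish on $[\bm{0},\bm{1}]$ so that $F_i$ coincides with $f_i$ along the trajectory, giving (iii). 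Your case split across $u_1=0$ (resp. $u_2=1$) using $D_1\geq a_{12}^*$ (resp. $D_2\geq a_{21}^*$) is the crux of the construction and is carried out correctly.

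One small technical point worth tightening: the truncations $\{\cdot\}^-$ make $F_1,F_2$ Lipschitz but not $C^1$ on $[\bm{-1},\bm{2}]$, so the phrase ``integral mean-value identity'' should be read either with almost-everywhere derivatives (legitimate since $F_i$ are Lipschitz, hence absolutely continuous along line segments) or replaced by the standard incremental splitting
\begin{equation*}
F_i(u_1^+,u_2^+)-F_i(u_1^-,u_2^-)
=\bigl[F_i(u_1^+,u_2^+)-F_i(u_1^-,u_2^+)\bigr]+\bigl[F_i(u_1^-,u_2^+)-F_i(u_1^-,u_2^-)\bigr],
\end{equation*}
with the first bracket controlled by the Lipschitz constant and the second bracket written as $b_{ij}\,z_j$ with $b_{ij}\geq0$ by the monotonicity of $F_i$ in the off-diagonal variable. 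Either route yields a linear parabolic system for $\bm{z}=\bm{w}^+-\bm{w}^-$ with bounded measurable coefficients and nonnegative off-diagonal entries, which together with the boundedness of $\bm{z}$ is all the comparison principle on the unbounded spatial domain requires.
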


Now we are ready to construct a pair of appropriate sub- and supersolutions of \eqref{F1F2} using the pulsating front
$\bm{U}(x,x+ct)=(U_1(x,x+ct),U_2(x,x+ct))$.

Let $\chi$ be a smooth function satisfying $\chi(\xi)=0$ for any $\xi\leq-2$,
$\chi(\xi)=1$ for any $\xi\geq2$, $0\leq\chi^\prime\leq1$ and $|\chi^{\prime\prime}|\leq1$.
Define a positive $L$-periodic vector $\bm{p}(x,\xi)=(p_1(x,\xi),p_2(x,\xi))$ with
\begin{align*}
p_1(x,\xi)&=(1-\chi(\xi))\phi_{01}^*(x)+\chi(\xi)\phi_{11}^*(x),~(x,\xi)\in\Bbb R^2,\\
p_2(x,\xi)&=(1-\chi(\xi))\phi_{02}^*(x)+\chi(\xi)\phi_{12}^*(x),~(x,\xi)\in\Bbb R^2,
\end{align*}
where $(\phi_{i1}^*(x),\phi_{i2}^*(x))~(i=0,1)$ are defined in Lemma \ref{0-1-eigen-eigen}.
Furthermore, denote
$$\rho^*=\max_{i,j=0,1}\left\{\mathop{\max}\limits_{x\in[0,L]}\phi^*_{ij}(x)\right\},
\quad\rho_*=\min_{i,j=0,1}\left\{\mathop{\min}\limits_{x\in[0,L]}\phi^*_{ij}(x)\right\}.$$

\begin{lemma}\label{TWS-sub-super}
There exist positive constants $\beta_0,~\delta_0$ and $\sigma_0$ such that for any $z^\pm\in\Bbb R$ and $\delta\in(0,\delta_0]$,
the functions $\bm{u}^\pm(t,x)=(u_1^\pm(t,x),u_2^\pm(t,x))$ defined by
\begin{equation*}
u_i^\pm(t,x)=U_i(x,x+ct+z^\pm\pm\sigma_0\delta(1-e^{-\beta_0 t}))
\pm\delta p_i(x,x+ct+z^\pm\pm\sigma_0\delta(1-e^{-\beta_0 t}))e^{-\beta_0 t},~i=1,2
\end{equation*}
are a pair of super- and subsolutions of \eqref{F1F2} for any $(t,x)\in(0,+\infty)\times\Bbb R$.
\end{lemma}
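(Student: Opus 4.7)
The plan is to verify directly that $S_i^+ \geq 0$ and $S_i^- \leq 0$, where $S_i^\pm := \partial_t u_i^\pm - L_i^* u_i^\pm - F_i(x, u_1^\pm, u_2^\pm)$. Setting $\xi := x + ct + z^\pm \pm \sigma_0\delta(1 - e^{-\beta_0 t})$, so that $\xi_t = c \pm \sigma_0\delta\beta_0 e^{-\beta_0 t}$ and $\xi_x = 1$, I differentiate $u_i^\pm(t,x) = U_i(x,\xi) \pm \delta e^{-\beta_0 t} p_i(x,\xi)$ by the chain rule and use the pulsating front relation
$$c U_{i,\xi} = d_i(U_{i,xx} + 2U_{i,x\xi} + U_{i,\xi\xi}) - a_i^*(U_{i,x} + U_{i,\xi}) + f_i(x, U_1, U_2)$$
to cancel the principal $U$-terms. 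A first-order Taylor expansion of $f_i(x, u^\pm)$ around $(x, U)$ and the observation that the nonlinear corrections $F_i - f_i$ are nonzero only in thin regions (for $S_1^-$ in $\{u_1^- < 0\}$, for $S_2^+$ in $\{u_2^+ > 1\}$) on which the relevant factors are themselves of order $\delta$, produce
$$S_i^\pm = \pm \delta e^{-\beta_0 t}\bigl\{ \sigma_0\beta_0 U_{i,\xi} + \mathcal{M}_i[\bm{p}](x,\xi) - \beta_0 p_i(x,\xi)\bigr\} + R_i^\pm,$$
where $\mathcal{M}_i[\bm{p}] := c p_{i,\xi} - d_i(p_{i,xx} + 2p_{i,x\xi} + p_{i,\xi\xi}) + a_i^*(p_{i,x} + p_{i,\xi}) - \sum_j (\partial_{u_j} f_i)(x, U)\, p_j$ and $|R_i^\pm| = O(\delta^2 e^{-2\beta_0 t})$.

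Everything then reduces to bounding the bracket $\{\,\cdot\,\}$ below by a strictly positive constant. I split $\xi \in \Bbb R$ into three regimes using a large parameter $N > 2$. In the far-left regime $\xi \leq -N$, the cutoff $\chi$ vanishes, so $p_i(x,\xi) = \phi_{0i}^*(x)$ and all $\xi$-derivatives of $p$ are zero; $\mathcal{M}_i[\bm{p}]$ then reduces to a pure $x$-operator acting on $\phi_{0i}^*$. Invoking the periodic eigenvalue problem \eqref{0-eigen}, whose coefficient matrix is exactly $\nabla_u f_i(x, \bm{0})$, yields $\mathcal{M}_i[\bm{p}](x,\xi) = -\mu_0^- \phi_{0i}^*(x) + [\nabla_u f_i(x, \bm{0}) - \nabla_u f_i(x, U)] \cdot (\phi_{01}^*, \phi_{02}^*)$, and the last bracket tends to $0$ uniformly in $x$ as $\xi \to -\infty$ by the convergence in Theorem \ref{main}. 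Since $-\mu_0^- > 0$, choosing $0 < \beta_0 < -\mu_0^-/2$ and then $N$ large renders $\mathcal{M}_i[\bm{p}] - \beta_0 p_i \geq \tfrac{1}{4}(-\mu_0^-)\rho_* > 0$, while $\sigma_0 \beta_0 U_{i,\xi} \geq 0$ by monotonicity. The far-right regime $\xi \geq N$ is handled symmetrically using \eqref{1-eigen}, imposing the additional constraint $\beta_0 < -\mu_1^-/2$.

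In the middle regime $|\xi| \leq N$, the strict monotonicity of $\bm{U}$ in $\xi$ (Theorem \ref{main}) combined with $L$-periodicity in $x$ implies that $U_{i,\xi}$ attains a strictly positive minimum $\alpha_0 > 0$ on the compact set $[0,L] \times [-N, N]$. All other terms $\mathcal{M}_i[\bm{p}]$ and $\beta_0 p_i$ are there bounded above by a constant $K = K(N)$; choosing $\sigma_0 \geq 2K/(\beta_0 \alpha_0)$ makes the bracket at least $K$ in this strip. Finally, $\delta_0$ is chosen small enough that $|R_i^\pm|$ is dominated by a fraction of the positive lower bounds obtained in the three regimes, uniformly for $\delta \in (0, \delta_0]$. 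The subsolution estimate is obtained by a symmetric argument with reversed signs. The most delicate point is the need to verify that the extra contributions $D_1\{u_1^-\}^- u_2^-$ and $D_2\{1-u_2^+\}^-(u_1^+ - 1)$ from the auxiliary $F_i$ are genuinely $O(\delta^2)$ in the only regions where they activate (which forces $U$ to lie close to the corresponding steady state, hence the cutoff factors to be $O(\delta)$), and to fix the four parameters in the strict order $\beta_0 \to N \to \sigma_0 \to \delta_0$ so that each depends only on previously selected quantities.
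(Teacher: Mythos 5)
Your proposal is correct and follows essentially the same strategy as the paper's proof: direct verification of the inequality after the change of variables to $\xi$, cancellation of leading terms via the profile equation, a three-region split with the far-field regions handled by the eigenvalue problems \eqref{0-eigen}, \eqref{1-eigen} (whose zeroth-order matrices are indeed $\nabla_u f(x,\bm{0})$ and $\nabla_u f(x,\bm{1})$), the compact middle region handled by the strict positivity of $U_{i,\xi}$ together with $\sigma_0$ large, and the auxiliary corrections $F_i - f_i$ absorbed as $O(\delta^2)$ because when they carry the wrong sign both relevant factors are necessarily bounded by $\delta p_i e^{-\beta_0 t}$ (using $\bm{0}\le\bm{U}\le\bm{1}$). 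The bookkeeping differs slightly — you use a genuine second-order Taylor expansion, while the paper uses the integral form of the mean value theorem and evaluates the coefficients at intermediate points (so it must choose $\hat\xi$ and $\delta_1$ simultaneously, whereas your use of $\nabla_u f(x,U)$ in $\mathcal{M}_i$ decouples the choice of $N$ from $\delta_0$); your parameter order $\beta_0\to N\to\sigma_0\to\delta_0$ versus the paper's $\hat\xi,\delta_1\to\beta_0\to\delta_0\to\sigma_0$ is an immaterial rearrangement — but these are cosmetic and both routes close.
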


\begin{proof}
The proof is analogous to that of \cite[Lemma 3.4]{bao2013}, for the completeness of the present paper, we give the details.
Let $\xi(t,x)=x+ct+z^++\sigma_0\delta(1-e^{-\beta_0 t})$. Then
\begin{align*}
&(u_1^+(t,x),u_2^+(t,x))=\left(U_1(x,\xi)+\delta p_1(x,\xi)e^{-\beta_0 t},~U_2(x,\xi)+\delta p_2(x,\xi)e^{-\beta_0 t}\right)>(0,0),\\
&F_1(x,u_1^+,u_2^+)=f_1(x,u_1^+,u_2^+),\quad F_2(x,u_1^+,u_2^+)=f_2(x,u_1^+,u_2^+)+D_2\{1-u_2^+\}^-(u_1^+-1).
\end{align*}
We only prove that $u^+_{2,t}-L_2^*u_2^+-F_2(x,u_1^+,u_2^+)\geq0$, since the others can be proved similarly.
Direct calculation shows that
\begin{align*}
&u^+_{2,t}-d_2u^+_{2,xx}+a_2^*u^+_{2,x}-F_2(x,u_1^+,u_2^+)\\
&=\delta e^{-\beta_0 t}\left[(c+\beta_0\sigma_0\delta e^{-\beta_0 t})p_{2,\xi}-\beta_0p_2
-d_2(p_{2,xx}+2p_{2,x\xi}+p_{2,\xi\xi})+a_2^*(p_{2,x}+p_{2,\xi})\right]\\
&\quad+\beta_0\sigma_0\delta U_{2,z}e^{-\beta_0 t}+f_2(x,U_1,U_2)-f_2(x,u_1^+,u_2^+)\\
&\quad-D_2\{1-u^+_2\}^-(u^+_1-1)\\
&=\delta e^{-\beta_0 t}\left[(c+\beta_0\sigma_0\delta e^{-\beta_0 t})p_{2,\xi}-\beta_0p_2
-d_2(p_{2,xx}+2p_{2,x\xi}+p_{2,\xi\xi})+a_2^*(p_{2,x}+p_{2,\xi})\right]\\
&\quad+\beta_0\sigma_0\delta U_{2,z}e^{-\beta_0 t}-
\left[\int_0^1{f_{2,u_1}\left(x,\widetilde{U}_1(x,\xi;\theta),\widetilde{U}_2(x,\xi;\theta)\right)d\theta}\right]
\delta p_1(x,\xi)e^{-\beta_0 t}\\
&\quad-\left[\int_0^1{f_{2,u_2}\left(x,\widetilde{U}_1(x,\xi;\theta),\widetilde{U}_2(x,\xi;\theta)\right)d\theta}\right]
\delta p_2(x,\xi)e^{-\beta_0 t}\\
&\quad-D_2\{1-u^+_2\}^-(u^+_1-1),
\end{align*}
where $\widetilde{U}_i(x,\xi;\theta):=U_i(x,\xi)+\theta\delta p_i(x,\xi)e^{-\beta_0t}$, $\theta\in(0,1)$, $~i=1,2$.

Denote
\begin{align*}
\Gamma_0(x,\xi;\theta)&=\mathop{\sum}\limits_{i,j=1,2}\left|f_{i,u_j}(x,\widetilde{U}_1,\widetilde{U}_2)-f_{i,u_j}(x,0,0)\right|,\\
\Gamma_1(x,\xi;\theta)&=\mathop{\sum}\limits_{i,j=1,2}\left|f_{i,u_j}(x,\widetilde{U}_1,\widetilde{U}_2)-f_{i,u_j}(x,1,1)\right|.
\end{align*}
Noting that
$$\mathop{\lim}\limits_{\xi\to-\infty}(U_1(x,\xi),U_2(x,\xi))=(0,0),\
\mathop{\lim}\limits_{\xi\to+\infty}(U_1(x,\xi),U_2(x,\xi))=(1,1)\text{~~uniformly for~~}x\in\Bbb R.$$
Then there exist some $\hat{\xi}>2$ large enough and $\delta_1\in(0,1)$ small enough such that
for any $\theta\in(0,1)$ and $\delta\in(0,\delta_1)$, it follows
\begin{align*}
\mathop{\sup}\limits_{(x,\xi)\in\Bbb R\times(-\infty,-\hat{\xi}]}
\left|\Gamma_0(x,\xi;\theta)\right|&\leq\frac{|\mu_0^-|
\min\left\{\mathop{\min}\limits_{x\in[0,L]}\phi^*_{01},\mathop{\min}\limits_{x\in[0,L]}\phi^*_{02}\right\}}
{2\mathop{\max}\limits_{x\in[0,L]}(\phi^*_{01}+\phi^*_{02})},\\
\mathop{\sup}\limits_{(x,\xi)\in\Bbb R\times[\hat{\xi},+\infty)}
\left|\Gamma_1(x,\xi;\theta)\right|&\leq\frac{|\mu_1^-|
\min\left\{\mathop{\min}\limits_{x\in[0,L]}\phi^*_{11},\mathop{\min}\limits_{x\in[0,L]}\phi^*_{12}\right\}}
{2\mathop{\max}\limits_{x\in[0,L]}(\phi^*_{11}+\phi^*_{12})}.
\end{align*}
Denote
\begin{align*}
d&=\max\left\{\mathop{\max}\limits_{x\in[0,L]}d_1(x), \mathop{\max}\limits_{x\in[0,L]}d_2(x)\right\},\quad
a^*=\max\left\{\mathop{\max}\limits_{x\in[0,L]}|a_1^*(x)|, \mathop{\max}\limits_{x\in[0,L]}|a_2^*(x)|\right\},\\
D&=\max\{D_1, D_2\},\quad
C_1=\max\left\{|p_1|_{C^{2,2}([0,L]\times\Bbb R)}, |p_2|_{C^{2,2}([0,L]\times\Bbb R)}\right\},\\
C_2&=\mathop{\sup}\limits_{(u_1,u_2)\in[\bm{-1},\bm{2}],~x\in\Bbb R}
\left\{\sum_{i,j=1,2}|f_{i,u_j}(x,u_1,u_2)|\right\},\
C_3=\min_{i=1,2}\left\{\mathop{\min}\limits_{x\in[0.L],~\xi\in[-\hat{\xi},\hat{\xi}]}
\frac{\partial U_i}{\partial z}\right\}.
\end{align*}
For any fixed $\beta_0\in\left(0,\min\left\{\frac{|\mu_0^-|}{4},\frac{|\mu_1^-|}{4}\right\}\right)$, let
$$\delta_0=\min\left\{\delta_1, \frac{|\mu_0^-|}{4D\rho^*}, \frac{|\mu_1^-|}{4D\rho^*}, \frac{C_3}{2C_1}\right\},\quad
\sigma_0\geq\frac{2C_1[c+\beta_0+4d+2a^*+\delta_0C_1D+C_2]}{\beta_0C_3}.$$
Consider the following three cases.
\begin{description}
\item[Case~1.]  $\xi(t,x)\geq\hat{\xi}$.
Then $p_1(x,\xi)=\phi_{11}^*(x)$ and $p_2(x,\xi)=\phi_{12}^*(x)$, and
\begin{align*}
&\mathcal{F}_2(x,u_1^+,u_2^+)\\
&\geq\delta e^{-\beta_0 t}\left[-\beta_0\phi_{12}^*
-d_2(\phi_{12}^*)^{\prime\prime}+a_2^*(\phi_{12}^*)^{\prime}\right]\\
&\quad-\left[\int_0^1{f_{2,u_1}\left(x,\widetilde{U}_1(x,\xi;\theta),\widetilde{U}_2(x,\xi;\theta)\right)d\theta}\right]
\delta e^{-\beta_0 t}\cdot\phi_{11}^*\\
&\quad-\left[\int_0^1{f_{2,u_2}\left(x,\widetilde{U}_1(x,\xi;\theta),\widetilde{U}_2(x,\xi;\theta)\right)d\theta}\right]
\delta e^{-\beta_0 t}\cdot\phi_{12}^*\\
&\quad-D_2\{1-u^+_2\}^-(u^+_1-1)\\
&=\delta e^{-\beta_0 t}\left\{\int_0^1\left(f_{2,u_1}(x,1,1)-
f_{2,u_1}(x,\widetilde{U}_1(x,\xi;\theta),\widetilde{U}_2(x,\xi;\theta))\right)d\theta\cdot\phi_{11}^*\right .\\
&\quad+\int_0^1\left(f_{2,u_2}(x,1,1)
\left .-f_{2,u_2}(x,\widetilde{U}_1(x,\xi;\theta),\widetilde{U}_2(x,\xi;\theta))\right)d\theta\cdot\phi_{12}^*
-(\mu_1^-+\beta_0)\phi_{12}^*\right\}\\
&\quad-D_2\{1-u^+_2\}^-(u^+_1-1)\\
&\geq\delta e^{-\beta_0 t}\phi_{12}^*
\left[-\frac{|\mu_1^-|}{2}-\mu_1^--\beta_0-D_2\delta\phi_{11}^*\right]\\
&\geq0.
\end{align*}
\item[Case~2.]  $|\xi(t,x)|\leq\hat{\xi}$.
Then for any $\delta\in(0,\delta_0]$,
\begin{align*}
&\mathcal{F}_2(x,u_1^+,u_2^+)\\
&\geq-\delta e^{-\beta_0 t}\left[(c+\beta_0\sigma_0\delta)C_1+\beta_0C_1
+4dC_1+2a^*C_1\right]\\
&\quad+\beta_0\sigma_0\delta C_3e^{-\beta_0 t}
-C_2\delta C_1e^{-\beta_0 t}
-D\delta C_1e^{-\beta_0 t}\delta C_1e^{-\beta_0 t}\\
&\geq\delta e^{-\beta_0 t}
\left[\beta_0\sigma_0( C_3-\delta C_1)-C_1(c+\beta_0+4d+2a^*+C_2+D\delta_0C_1)\right]\\
&\geq0.
\end{align*}
\item[Case~3.]  $\xi(t,x)\leq-\hat{\xi}$.
Then $p_1(x,\xi)=\phi_{01}^*(x)$ and $p_2(x,\xi)=\phi_{02}^*(x)$. Similar to the proof of
$\bm{Case~1}$, one can prove that $\mathcal{F}_2(x,u_1^+,u_2^+)\geq0$.
\end{description}
Consequently, $\mathcal{F}_2(x,u_1^+,u_2^+)\geq0$ for any $(t,x)\in(0,+\infty)\times\Bbb R$.
With analogous arguments as above, we complete the proof.
\end{proof}

\section{Global stability and uniqueness}

This section is devoted to the study of global stability and uniqueness of the pulsating
front $\bm{U}(x,x+ct)=(U_1(x,x+ct),U_2(x,x+ct))$ with nonzero wave speed.
Without loss of generality, we assume that the wave speed $c>0$, the case $c<0$ can be discussed similarly.
By using the convergence theorem for monotone semiflows (see \cite[Theorem 2.2.4]{zhao2003}),
we show that the solution of the Cauchy problem associated to \eqref{u1u2-0-1} with some proper initial values
converges to a translation of the pulsating front as $t\to+\infty$.
The general strategy of the proof is to trap the solution of a Cauchy problem related to \eqref{u1u2-0-1} between suitable sub- and supersolutions
established in Section 3 which are close to some shifts of the periodic traveling wave $\bm{U}(x,x+ct)=(U_1(x,x+ct),U_2(x,x+ct))$.
The uniqueness result can then be viewed as a consequence of the global stability.

Consider the moving coordinates
$$(t,z)=(t,x+ct).$$
By rescaling system \eqref{u1u2-0-1} into the moving coordinates $(t,z)$, that is, let
$$\bm{v}(t,z)=(v_1(t,z),v_2(t,z))=(v_1(t,x+ct),v_2(t,x+ct)),$$
we transform \eqref{u1u2-0-1} into the following system
\begin{equation}\label{TWS-t-z}
\begin{cases}
\frac{\partial v_1(t,z)}{\partial t}=d_1(z-ct)\frac{\partial^2 v_1}{\partial z^2}
-(a_1^*(z-ct)+c)\frac{\partial v_1}{\partial z}+f_1(z-ct,v_1,v_2),\\
\frac{\partial v_2(t,z)}{\partial t}=d_2(z-ct)\frac{\partial^2 v_2}{\partial z^2}
-(a_2^*(z-ct)+c)\frac{\partial v_2}{\partial z}+f_2(z-ct,v_1,v_2),
\end{cases}
t>0,~z\in\Bbb R.
\end{equation}
Then \eqref{TWS-t-z} is a time periodic system with period $T=\frac{L}{c}>0$.
Define $Q_t(\bm{v}_0):=\bm{v}(t,\cdot;\bm{v}_0)$ for any $\bm{v}_0\in\mathcal C_{\bm{1}}$ and $t\geq0$, where
$\bm{v}(t,\cdot;\bm{v}_0)$ is the unique solution of \eqref{TWS-t-z} with initial value $\bm{v}_0$.
Let $P :\mathcal C_{\bm{1}}\to \mathcal C_{\bm{1}}$ be the Poincar$\acute{e}$ map associated with the
periodic semiflow $Q_t(\cdot)$, that is, $P(\bm{v}_0)=Q_T(\bm{v}_0)=\bm{v}(T,\cdot;\bm{v}_0)$.
For any $\tau\in\Bbb{R}$, since the pulsating front $\bm{U}(x,z+\tau)=\bm{U}(x,x+ct+\tau)$ is a solution of system \eqref{u1u2-0-1},
it follows that $$\bm{V}^\tau(t,z):=\bm{U}(z-ct,z+\tau)$$ is a $T$-periodic solution of \eqref{TWS-t-z} satisfying $\bm{V}^{\tau_1}(0,\cdot)>\bm{V}^{\tau_2}(0,\cdot)$ for all $\tau_1>\tau_2$ by the strictly monotonicity of $\bm{U}(\cdot,z)$ with respect to $z$.
Furthermore, it can be seen that $\bm{0}$, $\bm{1}$ and $\bm{V}^\tau(0,\cdot)$ are fixed points of $P$ in $\mathcal C_{\bm{1}}$, with
the set $\{\bm{V}^\tau(0,\cdot)|\tau\in\Bbb{R}\}$ totally ordered in $\mathcal C_{\bm{1}}$.
Next we shall apply the following convergence lemma to the Poincar$\acute{e}$ map $P$ and
its fixed points $\{\bm{V}^\tau(0,\cdot)|\tau\in\Bbb{R}\}$.

\begin{lemma}[\rm{\cite[Theorem 2.2.4]{zhao2003}}]\label{convergence}
Let $M$ be a closed convex subset of an order Banach space $\mathcal{X}$, and $\Phi :M\to M$ be a monotone semiflow.
Assume that there exists a monotone homeomorphism $\zeta$ from $[0,1]$ onto a subset of $M$ such that
\begin{itemize}
\item[(1)] For each $s\in[0,1]$, $\zeta(s)$ is a stable equilibrium for $\Phi :M\to M$.
\item[(2)] Each orbit of $\Phi$ in $[\zeta(0),\zeta(1)]$ is precompact.
\item[(3)] One of the following two properties holds:
\begin{itemize}
\item[(a)] if $\zeta(s_0)<_\mathcal{X}\omega(\phi)$ for some $s_0\in[0,1)$ and $\phi\in\mathcal{X}_{[\zeta(0),\zeta(1)]}$,
then there exists $s_1\in(s_0,1)$ such that $\zeta(s_1)\leq_\mathcal{X}\omega(\phi)$,
here $\omega(\phi)$ is the $\omega$-limit set of $\phi$ in $\Phi$;
\item[(b)] if $\omega(\phi)<_\mathcal{X}\zeta(r_1)$ for some $r_1\in(0,1]$ and $\phi\in\mathcal{X}_{[\zeta(0),\zeta(1)]}$,
then there exists $r_0\in(0,r_1)$ such that $\omega(\phi)\leq_\mathcal{X}\zeta(r_0)$.
\end{itemize}
\end{itemize}
Then for any precompact orbit $\gamma^+(\phi_0)$ of $\Phi$ in $M$
with $\omega(\phi_0)\cap[\zeta(0),\zeta(1)]_{\mathcal{X}}\neq\emptyset$, there exists
$s_*\in[0,1]$ such that $\omega(\phi_0)=\zeta(s_*)$.
\end{lemma}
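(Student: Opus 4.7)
The plan is to follow the order-theoretic strategy standard in the theory of monotone dynamical systems. Since the forward orbit $\gamma^+(\phi_0)$ is precompact, the omega-limit set $\omega := \omega(\phi_0)$ is nonempty, compact, and $\Phi$-invariant. The assumption $\omega \cap [\zeta(0),\zeta(1)]_{\mathcal{X}} \neq \emptyset$, together with the monotonicity of $\Phi$ and the invariance of $\omega$, will allow one to trap $\omega$ between two points of the ordered arc $\zeta([0,1])$; the dichotomy in hypothesis (3) then pinches those two points to a single one.

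Concretely, I would introduce the parameters
\begin{equation*}
s^{-} := \sup\{s \in [0,1] : \zeta(s) \leq_{\mathcal{X}} y \text{ for every } y \in \omega\}, \quad s^{+} := \inf\{s \in [0,1] : y \leq_{\mathcal{X}} \zeta(s) \text{ for every } y \in \omega\},
\end{equation*}
and check, using continuity of $\zeta$ and closedness of $\omega$, that both extrema are attained, yielding $\zeta(s^{-}) \leq_{\mathcal{X}} y \leq_{\mathcal{X}} \zeta(s^{+})$ for every $y \in \omega$. Hypothesis (3) is tailor-made here: under option (a), a strict inequality $\zeta(s^{-}) <_{\mathcal{X}} \omega$ would produce some $s_1 > s^{-}$ with $\zeta(s_1) \leq_{\mathcal{X}} \omega$, contradicting the maximality of $s^{-}$; under option (b), the symmetric argument forbids strict inequality on the $s^{+}$ side. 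One thereby forces $s^{-} = s^{+} =: s_{*}$, and the stability of $\zeta(s_{*})$ combined with the $\Phi$-invariance of $\omega$ upgrades the two-sided trapping to the equality $\omega(\phi_0) = \zeta(s_{*})$.

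The main technical obstacle is not the abstract order manipulation but the preliminary step of promoting the single comparison furnished by $\omega \cap [\zeta(0),\zeta(1)]_{\mathcal{X}} \neq \emptyset$ into a global sandwich of $\omega$ by elements of the arc, so that $s^{\pm}$ are well-defined and finite in $[0,1]$. Here the stability of each $\zeta(s)$ plays a decisive role: if some $y \in \omega$ lies close to $\zeta(s)$, then the $\Phi$-orbit through $y$ stays near $\zeta(s)$, and $\Phi$-invariance of $\omega$ propagates this information to the whole set. A further subtlety is that the dichotomy in (3) is phrased with a \emph{strict} inequality; one must verify that the weak-inequality case already reduces to the desired conclusion, so that the sup/inf argument is genuinely driven to a contradiction whenever $s^{-} < s^{+}$.
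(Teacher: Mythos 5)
This lemma is stated in the paper with a citation to \cite[Theorem 2.2.4]{zhao2003} and carries no proof there, so your argument can only be judged on its own terms. Your overall strategy is the right one, but as written it contains two real gaps. First, the sets over which you take $\sup$ and $\inf$ may be empty: you define $s^-$ by requiring $\zeta(s)\leq_{\mathcal X}y$ for \emph{every} $y\in\omega(\phi_0)$, but the hypothesis only supplies a \emph{nonempty intersection} $\omega(\phi_0)\cap[\zeta(0),\zeta(1)]_{\mathcal X}\neq\emptyset$, not the inclusion $\omega(\phi_0)\subset[\zeta(0),\zeta(1)]_{\mathcal X}$. Without that inclusion you have no right to write $\zeta(s^-)\leq_{\mathcal X}y\leq_{\mathcal X}\zeta(s^+)$ for all $y\in\omega(\phi_0)$; you flag this as "the main technical obstacle" but never resolve it. The standard repair is to pick a single $y_0\in\omega(\phi_0)\cap[\zeta(0),\zeta(1)]_{\mathcal X}$ and run the trapping for $\omega(y_0)$ rather than for all of $\omega(\phi_0)$: invariance of $\omega(\phi_0)$ puts the forward orbit of $y_0$ inside $\omega(\phi_0)$, and monotonicity (applied to the fixed points $\zeta(0),\zeta(1)$) keeps that orbit, and hence $\omega(y_0)$, inside $[\zeta(0),\zeta(1)]_{\mathcal X}$. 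This also repairs a second issue you gloss over: hypothesis (3) is only assumed for $\phi\in\mathcal X_{[\zeta(0),\zeta(1)]}$, and your $\phi_0$ need not lie there, whereas $y_0$ does.

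Second, the pinching logic $s^-=s^+$ is not how the conclusion actually arrives, and as you present it it tacitly uses both (a) and (b), while the hypothesis offers an \emph{exclusive} alternative — only one of the two dichotomy clauses is assumed. Under (a), maximality of $s^-$ rules out the strict inequality $\zeta(s^-)<_{\mathcal X}\omega(y_0)$, and since $\zeta(s^-)\leq_{\mathcal X}\omega(y_0)$, this forces $\zeta(s^-)\in\omega(y_0)\subset\omega(\phi_0)$ directly — you never need $s^+$ at all, nor do you need to show $s^-=s^+$. (Under (b) the same reasoning applies to $s^+$.) The convergence then follows from a step you only allude to: if the Lyapunov-stable equilibrium $\zeta(s^-)$ (hypothesis (1)) lies in $\omega(\phi_0)$ and $\gamma^+(\phi_0)$ is precompact, then the orbit eventually enters any stability neighborhood of $\zeta(s^-)$ and hence stays near it, so $\omega(\phi_0)=\{\zeta(s^-)\}$. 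Spelling this out is essential; it is precisely where hypothesis (1) earns its keep, and your current phrase that stability "upgrades the two-sided trapping" does not carry the argument.
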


Next we provide two lemmas for the preparation of using Lemma \ref{convergence}.

\begin{lemma}\label{G1}
Assume that $\bm{v}_0\in\mathcal C_{\bm{1}}$ satisfies
\begin{equation}\label{initial}
\mathop{\lim\sup}\limits_{z\to-\infty}\bm{v}_0(z)\ll\delta_1\rho_*\bm{1},\quad
\mathop{\lim\inf}\limits_{z\to+\infty}\bm{v}_0(z)\gg\bm{1}-\delta_1\rho_*\bm{1},
\end{equation}
where $\delta_1=\min\left\{\delta_0, \frac{1}{\rho_*}\right\}$ with $\delta_0$ defined in Lemma \ref{TWS-sub-super}.
Then for any $\varepsilon>0$, there exist some
$\hat{\tau}=\hat{\tau}(\varepsilon,\bm{v}_0)>0$ and an integer $\hat{k}=\hat{k}(\varepsilon,\bm{v}_0)>0$ such that
$$\bm{V}^{-\hat{\tau}}(0,z)-\varepsilon\bm{1}\leq\bm{v}(\hat{k}T,z;\bm{v}_0)
\leq\bm{V}^{\hat{\tau}}(0,z)+\varepsilon\bm{1},\ \ \forall z\in\Bbb{R}.$$
\end{lemma}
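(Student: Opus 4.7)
The plan is to trap the solution $\bm{v}(\hat{k}T,\cdot;\bm{v}_0)$ of \eqref{TWS-t-z} between the values at $t=\hat{k}T$ of the sub- and supersolutions $\bm{u}^\pm(t,x)$ furnished by Lemma~\ref{TWS-sub-super}, and then exploit the $L$-periodicity of $U_i$ and $p_i$ in their first arguments---together with the identity $ckT=kL$---to recast these bounds at $t=kT$ as shifts $\bm{V}^{\tau_k^\pm}(0,\cdot)$ plus an error decaying like $e^{-\beta_0 kT}$.

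First I would fix $\delta=\delta_1$ (so that $\delta\leq\delta_0$ and $\delta\rho_*\leq 1$), use hypothesis~\eqref{initial} to choose $M>0$ such that $v_{0,i}(z)<\delta\rho_*$ for $z\leq-M$ and $v_{0,i}(z)>1-\delta\rho_*$ for $z\geq M$, and invoke the uniform convergence $\bm{U}(x,z)\to\bm{0}$ ($z\to-\infty$), $\bm{U}(x,z)\to\bm{1}$ ($z\to+\infty$) to pick $Z_0>0$ with $U_i(x,z)<\delta\rho_*$ for $z<-Z_0$ and $U_i(x,z)>1-\delta\rho_*$ for $z>Z_0$, uniformly in $x$. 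I would then select $z^+>Z_0+M$ and $z^-<-(Z_0+M)$ and verify case by case on $\{x<-M\}$, $\{-M\leq x\leq M\}$, $\{x>M\}$ that
\[
u_i^-(0,x)\leq v_{0,i}(x)\leq u_i^+(0,x),\qquad\forall\,x\in\Bbb R,\ i=1,2.
\]
The mechanism is that $\delta p_i(x,\cdot)\geq\delta\rho_*$ absorbs $v_{0,i}$ near $-\infty$ (for the upper bound) and the deficit $1-v_{0,i}$ near $+\infty$ (for the lower bound), while the large shifts $z^\pm$ drive $U_i(x,x+z^\pm)$ uniformly close to $1$ (respectively $0$) on the central range $[-M,M]$.

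Granted this initial bracketing, Lemma~\ref{sub-super method} yields $\bm{u}^-(t,x)\leq\bm{u}(t,x;\bm{v}_0)\leq\bm{u}^+(t,x)$ for every $(t,x)\in\Bbb R_+\times\Bbb R$. Passing to the moving frame via $\bm{v}(t,z)=\bm{u}(t,z-ct)$ and evaluating at $t=kT$, the $L$-periodicity of $U_i$ and $p_i$ in the first variable transforms
\[
u_i^\pm(kT,z-kL)=V_i^{\tau_k^\pm}(0,z)\pm\delta p_i(z,z+\tau_k^\pm)e^{-\beta_0 kT},\qquad\tau_k^\pm:=z^\pm\pm\sigma_0\delta(1-e^{-\beta_0 kT}).
\]
Setting $\hat\tau:=\max\{z^+,-z^-\}+\sigma_0\delta$ and using the strict monotonicity of $\tau\mapsto\bm{V}^\tau(0,z)$ (Theorem~\ref{main}) yields $\bm{V}^{-\hat\tau}(0,z)\leq\bm{V}^{\tau_k^-}(0,z)$ and $\bm{V}^{\tau_k^+}(0,z)\leq\bm{V}^{\hat\tau}(0,z)$ for every $k$; choosing $\hat{k}\in\Bbb{Z}^+$ so large that $\delta\rho^* e^{-\beta_0\hat{k}T}<\varepsilon$ then delivers the claim.

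The hard part, I expect, is the opening bracketing: the precise threshold $\delta_1\rho_*$ in~\eqref{initial} is calibrated exactly so that the asymptotic values of $\bm{v}_0$ at $\pm\infty$ can be dominated by the correction $\pm\delta p_i$ in $\bm{u}^\pm(0,\cdot)$ uniformly in $x$, and some care is needed to check the three regimes in $x$ simultaneously so that the three intervals cover $\Bbb R$ without leaving an intermediate region where the comparison might fail.
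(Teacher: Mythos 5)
Your proof follows the paper's argument essentially step for step: establish the initial ordering $\bm{U}(\cdot,\cdot+z^-)-\delta\bm{p}\leq\bm{v}_0\leq\bm{U}(\cdot,\cdot+z^+)+\delta\bm{p}$ using the asymptotic condition \eqref{initial} and the uniform limits of $\bm{U}$, invoke the comparison principle from Lemma~\ref{sub-super method}, evaluate at $t=\hat{k}T$ so that $ckT=kL$ cancels against the $L$-periodicity of $U_i$ and $p_i$, and absorb the $\sigma_0\delta$ drift and the exponentially small correction into $\hat\tau=\max\{z^+,-z^-\}+\sigma_0\delta$ and $\varepsilon$. The only cosmetic difference is that the paper introduces an intermediate $\delta'\in(0,\delta_1)$ and works with $\delta\in(\delta',\delta_1)$ where you simply fix $\delta=\delta_1$; since the strict inequalities in \eqref{initial} already give the needed slack and $\delta_1\in(0,\delta_0]$, your choice is equally valid.
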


\begin{proof}
Let $\delta^{\prime}\in(0,\delta_1)$ be such that
\begin{equation}\label{ini}
\mathop{\lim\sup}\limits_{z\to-\infty}\bm{v}_0(z)<\delta^{\prime}\rho_*\bm{1},\quad
\mathop{\lim\inf}\limits_{z\to+\infty}\bm{v}_0(z)>\bm{1}-\delta^{\prime}\rho_*\bm{1}.
\end{equation}
Define a pair of $\tilde{\bm{v}}^{\pm}(t,z)=(\tilde{v}_1^{\pm}(t,z),\tilde{v}_2^{\pm}(t,z))$ as
$$\tilde{v}_i^\pm(t,z)=U_i(z-ct,z+z^\pm\pm\sigma_0\delta(1-e^{-\beta_0 t}))
\pm\delta p_i(z-ct,z+z^\pm\pm\sigma_0\delta(1-e^{-\beta_0 t}))e^{-\beta_0 t},\ i=1,2.$$
Then Lemma \ref{TWS-sub-super} implies that $\tilde{\bm{v}}^{\pm}(t,z)$ are sub- and supersolutions of
\begin{equation*}
\begin{cases}
\frac{\partial v_1(t,z)}{\partial t}=d_1(z-ct)\frac{\partial^2 v_1}{\partial z^2}
-(a_1^*(z-ct)+c)\frac{\partial v_1}{\partial z}+F_1(z-ct,v_1,v_2),\\
\frac{\partial v_2(t,z)}{\partial t}=d_2(z-ct)\frac{\partial^2 v_2}{\partial z^2}
-(a_2^*(z-ct)+c)\frac{\partial v_2}{\partial z}+F_2(z-ct,v_1,v_2),
\end{cases}
t>0,~z\in\Bbb R,
\end{equation*}
By \eqref{ini}, there exist $z^+>0$ and $-z^->0$ large enough such that for $\delta\in(\delta^{\prime},\delta_1)$,
$$\bm{U}(z,z+z^-)-\delta\bm{p}(z,z+z^-)\leq\bm{v}_0(z)\leq\bm{U}(z,z+z^+)+\delta\bm{p}(z,z+z^+),\ \ \forall z\in\Bbb{R}.$$
It then follows from the comparison principle that
$$\tilde{\bm{v}}^-(t,z)\leq\bm{v}(t,z;\bm{v}_0)\leq\tilde{\bm{v}}^+(t,z),\ \ \forall t>0,\ z\in\Bbb{R},$$
that is, for any $t>0$, we have
\begin{align*}
&\bm{U}(z-ct,z+z^--\sigma_0\delta(1-e^{-\beta_0 t}))-\delta\bm{p}(z-ct,z+z^--\sigma_0\delta(1-e^{-\beta_0 t}))e^{-\beta_0 t}\\
&\leq\bm{v}(t,z;\bm{v}_0)\\
&\leq\bm{U}(z-ct,z+z^++\sigma_0\delta(1-e^{-\beta_0 t}))+\delta\bm{p}(z-ct,z+z^++\sigma_0\delta(1-e^{-\beta_0 t}))e^{-\beta_0 t}.
\end{align*}
Now for any $\varepsilon>0$, letting $t=\hat{k}T>0$ in the above inequalities,
where $\hat{k}>0$ is a large enough integer satisfying $\delta\rho^*e^{-\beta_0\hat{k}T}\leq\varepsilon$.
It follows that
\begin{align*}
&\bm{U}(z,z+z^--\sigma_0\delta(1-e^{-\beta_0\hat{k}T}))
-\delta\bm{p}(z,z+z^--\sigma_0\delta(1-e^{-\beta_0\hat{k}T}))e^{-\beta_0\hat{k}T}\\
&\leq\bm{v}(\hat{k}T,z;\bm{v}_0)\\
&\leq\bm{U}(z,z+z^++\sigma_0\delta(1-e^{-\beta_0\hat{k}T}))
+\delta\bm{p}(z,z+z^++\sigma_0\delta(1-e^{-\beta_0\hat{k}T}))e^{-\beta_0\hat{k}T}.
\end{align*}
Let $\hat{\tau}:=\max\{z^+,-z^-\}+\sigma_0\delta>0$, we further obtain
$$\bm{U}(z,z-\hat{\tau})-\varepsilon\bm{1}\leq\bm{v}(\hat{k}T,z;\bm{v}_0)\leq\bm{U}(z,z+\hat{\tau})+\varepsilon\bm{1},
\ \ \forall z\in\Bbb{R},$$
that is,
$$\bm{V}^{-\hat{\tau}}(0,z)-\varepsilon\bm{1}\leq\bm{v}(\hat{k}T,z;\bm{v}_0)
\leq\bm{V}^{\hat{\tau}}(0,z)+\varepsilon\bm{1},\ \ \forall z\in\Bbb{R}.$$
The proof is complete.
\end{proof}

\begin{lemma}\label{lyapunov}
For any $\tau\in\Bbb{R}$, the wave profile $\bm{V}^\tau(0,\cdot)$ is lyapunov stable for system \eqref{TWS-t-z},
that is, for any $\epsilon>0$, there exists some constant $\rho>0$ such that for any
$\bm{v}_0\in\mathcal C_{\bm{1}}$ satisfying $\lVert\bm{v}_0(\cdot)-\bm{V}^\tau(0,\cdot)\rVert_{\mathcal C}\leq\rho$,
there is $$\lVert\bm{v}(t,\cdot;\bm{v}_0)-\bm{V}^\tau(t,\cdot)\rVert_{\mathcal C}\leq\epsilon,\ \ \forall t>0.$$
\end{lemma}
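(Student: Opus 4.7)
The plan is to sandwich $\bm{v}(t,\cdot;\bm{v}_0)$ between the sub- and supersolutions of Lemma~\ref{TWS-sub-super}, translated into the moving frame exactly as in the proof of Lemma~\ref{G1}, chosen so that both stay uniformly within $\epsilon/2$ of $\bm{V}^\tau$ for all $t\geq0$ while leaving a gap of order $\delta\rho_*$ between $\tilde{\bm{v}}^{\pm}(0,\cdot)$ and $\bm{V}^\tau(0,\cdot)$ that can absorb a small perturbation of the initial data.

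The key analytic input is the uniform continuity of $\bm{U}(x,z)$ on $\mathbb{R}^2$. Since $\bm{U}$ is continuous, $L$-periodic in $x$, and converges uniformly in $x$ to $\bm{0}$, $\bm{1}$ as $z\to\mp\infty$ (Theorem~\ref{main}), the limits handle the region $|z|\geq M$ while compactness on $[0,L]\times[-M-1,M+1]$ combined with $L$-periodicity in $x$ handles the bounded strip. Hence, given $\epsilon>0$, there exists $\eta=\eta(\epsilon)>0$ such that $|a-b|<\eta$ implies $|\bm{U}(x,a)-\bm{U}(x,b)|<\epsilon/4$ uniformly in $x\in\mathbb{R}$.

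Now fix $\delta\in(0,\delta_0]$ small enough that $\delta\rho^*\leq\min\{1,\epsilon/4\}$ and $\sigma_0\delta<\eta/2$, choose $z^+\in(\tau,\tau+\eta/2)$ and $z^-\in(\tau-\eta/2,\tau)$, and set $\xi^\pm(t,z):=z+z^\pm\pm\sigma_0\delta(1-e^{-\beta_0 t})$. Define
\begin{equation*}
\tilde{v}_i^{\pm}(t,z):=U_i(z-ct,\xi^\pm(t,z))\pm\delta\,p_i(z-ct,\xi^\pm(t,z))e^{-\beta_0 t}.
\end{equation*}
By Lemma~\ref{TWS-sub-super}, $\tilde{\bm{v}}^{+}$ (resp.\ $\tilde{\bm{v}}^{-}$) is a supersolution (resp.\ subsolution) of the $(t,z)$-version of \eqref{F1F2} with values in $[\bm{-1},\bm{2}]$. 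Since $|\xi^\pm(t,z)-(z+\tau)|<\eta$ for all $t\geq0$, uniform continuity gives $|U_i(z-ct,\xi^\pm(t,z))-V_i^\tau(t,z)|<\epsilon/4$; together with $|\delta p_ie^{-\beta_0 t}|\leq\delta\rho^*\leq\epsilon/4$ this yields $\|\tilde{\bm{v}}^{\pm}(t,\cdot)-\bm{V}^\tau(t,\cdot)\|_{\mathcal{C}}<\epsilon/2$ for every $t\geq0$.

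Finally, take $\rho:=\delta\rho_*$. If $\|\bm{v}_0-\bm{V}^\tau(0,\cdot)\|_{\mathcal{C}}\leq\rho$, then the strict monotonicity of $\bm{U}(x,\cdot)$ (valid since $c\neq0$, by Theorem~\ref{main}) together with $p_i\geq\rho_*$ gives at $t=0$
\begin{equation*}
\tilde{v}_i^{+}(0,z)\geq U_i(z,z+\tau)+\delta\rho_*\geq v_{0,i}(z),\qquad \tilde{v}_i^{-}(0,z)\leq U_i(z,z+\tau)-\delta\rho_*\leq v_{0,i}(z).
\end{equation*}
Lemma~\ref{sub-super method}(ii) (with part (iii) identifying the \eqref{F1F2}- and \eqref{u1u2-0-1}-solutions on $[\bm{0},\bm{1}]$) propagates this ordering to all $t>0$, and combined with the $\epsilon/2$-closeness above this yields $\|\bm{v}(t,\cdot;\bm{v}_0)-\bm{V}^\tau(t,\cdot)\|_{\mathcal{C}}<\epsilon/2<\epsilon$, as required. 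The only genuinely nonroutine step is the uniform continuity of $\bm{U}$ on $\mathbb{R}^2$, which crucially uses the uniform-in-$x$ limits at $z=\pm\infty$ built into the definition of a pulsating front; everything else reduces to algebraic bookkeeping around Lemma~\ref{TWS-sub-super} and the comparison principle.
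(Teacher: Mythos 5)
Your proof is correct and takes essentially the same sandwich approach as the paper: trap $\bm{v}(t,\cdot;\bm{v}_0)$ between the sub- and supersolutions from Lemma~\ref{TWS-sub-super} and show those stay within $\epsilon$ of $\bm{V}^\tau$. The one difference is how you control the phase drift $\sigma_0\delta(1-e^{-\beta_0 t})$: you invoke uniform continuity of $\bm{U}$ in $z$ (uniformly in $x$, deduced from the uniform limits at $z=\pm\infty$ and $L$-periodicity in $x$), whereas the paper uses the quantitative Lipschitz bound $K_0:=\max_{x\in[0,L]}\lVert\bm{U}_z(x,\cdot)\rVert_{\mathcal C}$ to get $\lvert\bm{U}(\cdot,z+\tau\pm\sigma_0\delta(1-e^{-\beta_0 t}))-\bm{U}(\cdot,z+\tau)\rvert\leq\sigma_0\delta K_0$ and choose $\delta<\epsilon/(\rho^*+\sigma_0 K_0)$. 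Both are valid; the Lipschitz route is slightly cleaner since it lets one take $z^\pm=\tau$ directly. Your introduction of shifted $z^\pm\in(\tau-\eta/2,\tau)$ and $(\tau,\tau+\eta/2)$ is harmless but unnecessary --- with $z^\pm=\tau$ the initial ordering $\tilde{\bm{v}}^-(0,\cdot)\leq\bm{v}_0\leq\tilde{\bm{v}}^+(0,\cdot)$ already follows from $p_i\geq\rho_*$ and $\rho=\delta\rho_*$, without monotonicity of $\bm{U}$.
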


\begin{proof}
For any $\epsilon>0$, let
$$\delta\in\left(0,\min\left\{\frac{\epsilon}{\rho^*+\sigma_0 K_0},\delta_0\right\}\right),\quad\rho:=\delta\rho_*,$$
where $\sigma_0$ and $\delta_0$ are defined in Lemma \ref{TWS-sub-super},
and $K_0:=\mathop{\max}\limits_{x\in[0,L]}\lVert\bm{U}_z(x,\cdot)\rVert_{\mathcal C}$.
Then for any $\bm{v}_0\in\mathcal C_{\bm{1}}$ with $\lVert\bm{v}_0(\cdot)-\bm{V}^\tau(0,\cdot)\rVert_{\mathcal C}\leq\rho$,
we have
$$\bm{U}(z,z+\tau)-\delta\bm{p}(z,z+\tau)\leq\bm{v}_0(z)\leq\bm{U}(z,z+\tau)+\delta\bm{p}(z,z+\tau),\ \ \forall z\in\Bbb{R}.$$
The comparison principle then yields that
$$\tilde{\bm{w}}^-(t,z)\leq\bm{v}(t,z;\bm{v}_0)\leq\tilde{\bm{w}}^+(t,z),\ \ \forall t\geq0,\ z\in\Bbb{R},$$
where $\tilde{\bm{w}}^\pm(t,z)$ are defined as in Lemma \ref{TWS-sub-super} with $z^\pm=\tau$.
Therefore,
\begin{align*}
&\mathop{\sup}\limits_{t\geq0}\lVert\bm{v}(t,z;\bm{v}_0)-\bm{U}(z-ct,z+\tau)\rVert_{\mathcal C}\\
&\leq\mathop{\sup}\limits_{t\geq0}\lVert\tilde{\bm{w}}^{\pm}(t,z)-\bm{U}(z-ct,z+\tau)\rVert_{\mathcal C}\\
&\leq\mathop{\sup}\limits_{t\geq0}\lVert\tilde{\bm{w}}^{\pm}(t,z)-\bm{U}(z-ct,z+\tau\pm\sigma_0\delta(1-e^{-\beta_0 t}))\rVert_{\mathcal C}\\
&\quad+\mathop{\sup}\limits_{t\geq0}\lVert\bm{U}(z-ct,z+\tau\pm\sigma_0\delta(1-e^{-\beta_0 t}))-\bm{U}(z-ct,z+\tau)\rVert_{\mathcal C}\\
&\leq\delta\rho^*+\sigma_0\delta K_0\\
&\leq\epsilon,
\end{align*}
that is, $\lVert\bm{v}(t,\cdot;\bm{v}_0)-\bm{V}^\tau(t,\cdot)\rVert_{\mathcal C}\leq\epsilon$ for any $t>0$.
The proof is complete.
\end{proof}

Now we are ready to prove the globally stability of the pulsating front of \eqref{u1u2-0-1}.
Consider the following periodic initial problem associated with system \eqref{u1u2-0-1}
\begin{equation}\label{u1u2-cauchy}
\begin{cases}
\frac{\partial u_1}{\partial t}=L_1^*u_1+u_1\left[a_{11}^*(x)(1-u_1)-a_{12}^*(x)(1-u_2)\right],
\quad t>0,~x\in\Bbb R,\\
\frac{\partial u_2}{\partial t}=L_2^*u_2+(1-u_2)\left[a_{21}^*(x)u_1-a_{22}^*(x)u_2\right],
\quad t>0,~x\in\Bbb R,\\
\bm{u}(0,x)=\bm{u}_0(x),\quad x\in\Bbb R.
\end{cases}
\end{equation}

\begin{theorem}\label{GS}
Let $\bm{U}(x,x+ct)=(U_1(x,x+ct),U_2(x,x+ct))$ be the pulsating front of system \eqref{u1u2-0-1} connecting $\bm{0}$ to $\bm{1}$
with $c\neq0$,
and let $\bm{u}(t,x;\bm{u}_0)$ be the solution of \eqref{u1u2-cauchy} with the initial value $\bm{u}_0\in\mathcal C_{\bm{1}}$.
Then for any $\bm{u}_0$ satisfying \eqref{initial}, there exists $\tilde{\tau}\in\Bbb{R}$ such that
$$\mathop{\lim}\limits_{t\to+\infty}
\lVert\bm{v}(t,\cdot;\bm{u}_0)-\bm{V}^{c\tilde{\tau}}(t,\cdot)\rVert_{\mathcal C}=0,$$
that is,
$$\mathop{\lim}\limits_{t\to+\infty}
\lVert\bm{u}(t,\cdot;\bm{u}_0)-\bm{U}(x,x+ct+c\tilde{\tau})\rVert_{\mathcal C}=0.$$
\end{theorem}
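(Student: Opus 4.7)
My plan is to work in the moving coordinates $(t,z)=(t,x+ct)$, in which \eqref{u1u2-0-1} becomes the $T$-periodic system \eqref{TWS-t-z} with $T=L/c$, and to apply the abstract convergence Lemma \ref{convergence} to the Poincar\'e map $P:\mathcal C_{\bm{1}}\to\mathcal C_{\bm{1}}$, $P(\bm{\phi})=\bm{v}(T,\cdot;\bm{\phi})$. The totally ordered family of fixed points $\{\bm{V}^\tau(0,\cdot)=\bm{U}(\cdot,\cdot+\tau):\tau\in\Bbb R\}$ will provide the curve $\zeta$ in that lemma, and the goal is to show that the $\omega$-limit set of the $P$-orbit of $\bm{v}_0:=\bm{u}_0$ reduces to a single profile $\bm{V}^{c\tilde\tau}(0,\cdot)$.

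The first step is to trap the orbit in a fixed order interval between two pulsating-front profiles. Fix $\varepsilon\in(0,\delta_0\rho_*]$ and set $\delta:=\varepsilon/\rho_*\in(0,\delta_0]$; Lemma \ref{G1} produces $\hat\tau>0$ and an integer $\hat k\geq 1$ with
\[
\bm{V}^{-\hat\tau}(0,z)-\varepsilon\bm{1}\;\leq\;P^{\hat k}\bm{v}_0(z)\;\leq\;\bm{V}^{\hat\tau}(0,z)+\varepsilon\bm{1},\qquad z\in\Bbb R.
\]
Because $\bm{p}\geq\rho_*\bm{1}$ componentwise, the sub- and supersolutions $\tilde{\bm{v}}^\mp(t,z)$ of Lemma \ref{TWS-sub-super} with shifts $z^\mp=\mp\hat\tau$ and amplitude $\delta$ bracket $P^{\hat k}\bm{v}_0$ at time $\hat kT$. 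By Lemma \ref{sub-super method} the sandwich propagates to every $t\geq\hat kT$, and as $n\to\infty$ the correction $\delta\bm{p}\,e^{-\beta_0(nT-\hat kT)}$ decays uniformly, yielding
\[
\bm{V}^{-\tau_*}(0,\cdot)\;\leq\;\liminf_{n\to\infty}P^n\bm{v}_0\;\leq\;\limsup_{n\to\infty}P^n\bm{v}_0\;\leq\;\bm{V}^{\tau_*}(0,\cdot),\qquad \tau_*:=\hat\tau+\sigma_0\delta.
\]
Parabolic Schauder estimates then make the tail $\{P^n\bm{v}_0\}_{n\geq\hat k}$ precompact in the compact-open topology, so $\omega(\bm{v}_0)\neq\emptyset$ and is contained in $[\bm{V}^{-\tau_*}(0,\cdot),\bm{V}^{\tau_*}(0,\cdot)]$; the uniform limits of the two bounding profiles at $\pm\infty$ supply the tightness needed later to upgrade locally uniform to uniform convergence.

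Next I invoke Lemma \ref{convergence} with $M=\mathcal C_{[\bm{V}^{-\tau_*}(0,\cdot),\bm{V}^{\tau_*}(0,\cdot)]}$ and the monotone homeomorphism $\zeta(s):=\bm{V}^{(2s-1)\tau_*}(0,\cdot)$, $s\in[0,1]$. Condition (1) is Lemma \ref{lyapunov}; condition (2) follows from the parabolic regularity and tightness above. For condition (3a), if $\zeta(s_0)<_{\mathcal C}\omega(\bm{v}_0)$ strictly, pick $\bm{\phi}_*\in\omega(\bm{v}_0)$ with $\zeta(s_0)<\bm{\phi}_*$; strict monotonicity of $P$ (obtained by the strong maximum principle applied to the cooperative linearized system along a convex combination, exactly as in the proof of strict monotonicity of $\bm{U}(x,z)$ in Theorem \ref{main}) gives $\zeta(s_0)=P\zeta(s_0)\ll P\bm{\phi}_*\in\omega(\bm{v}_0)$, so $\zeta(s_0)+\eta\bm{e}_0\leq P\bm{\phi}_*$ for some $\eta>0$ and the positive direction $\bm{e}_0$ assembled in Lemma \ref{0-1}; the Lyapunov stability of $\zeta(s_0)$ then converts the vertical push $\eta\bm{e}_0$ into an actual horizontal shift, yielding $\zeta(s_1)\leq\omega(\bm{v}_0)$ for some $s_1>s_0$. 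Lemma \ref{convergence} delivers $s_*\in[0,1]$ with $\omega(\bm{v}_0)=\{\zeta(s_*)\}$; setting $c\tilde\tau:=(2s_*-1)\tau_*$ gives $P^n\bm{v}_0\to\bm{V}^{c\tilde\tau}(0,\cdot)$ locally uniformly, which combined with tightness yields convergence in $\|\cdot\|_{\mathcal C}$. Writing any $t\geq\hat kT$ as $t=nT+r$, $r\in[0,T)$, and using continuous dependence of the semiflow uniformly for $r\in[0,T]$ promotes this discrete convergence to continuous-time convergence; reverting $(t,z)\mapsto(t,x)$ delivers the stated limit. The main obstacle will be the verification of condition (3a) of Lemma \ref{convergence}, since the linearizations at $\bm{0}$ and $\bm{1}$ are not irreducible and the strongly-positive direction $\bm{e}_0$ must be pieced together as in Lemma \ref{0-1-eigen-eigen} and Lemma \ref{0-1} rather than supplied directly by the Krein--Rutman theorem.
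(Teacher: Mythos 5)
Your overall strategy matches the paper's almost exactly: pass to moving coordinates, form the Poincar\'e map $P$, use Lemma~\ref{G1} to trap $P^{\hat k}\bm{u}_0$ between shifted wave profiles plus small errors, establish precompactness by parabolic estimates, invoke Lemma~\ref{lyapunov} for stability of each $\zeta(s)$, and then apply Lemma~\ref{convergence} with $\zeta(s)=\bm{V}^{(2s-1)\tau_*}(0,\cdot)$. The final promotion from discrete to continuous time is also as in the paper.

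However, your verification of condition (3a) has a genuine gap, and this is precisely where the paper has to work hardest. You argue: $\zeta(s_0)\ll P\bm{\phi}_*$ by the strong maximum principle, hence $\zeta(s_0)+\eta\bm{e}_0\leq P\bm{\phi}_*$ for some $\eta>0$. This last step fails on the unbounded domain $\Bbb{R}$. The element $\bm{e}_0=(\phi_{01}^*,\phi_{02}^*)$ from Lemma~\ref{0-1} is an $L$-periodic function bounded below by $\rho_*\bm{1}>\bm{0}$, so $\zeta(s_0)(z)+\eta\bm{e}_0(z)\to\eta\rho_*\bm{1}\gg\bm{0}$ as $z\to-\infty$, whereas $P\bm{\phi}_*(z)\to\bm{0}$ (and symmetrically near $z=+\infty$, the ceiling $\bm{1}$ is saturated). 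Pointwise strict inequality between two profiles sharing the same limits at $\pm\infty$ does not yield a uniform gap, so no such $\eta>0$ exists. Moreover, the claim that Lyapunov stability then ``converts the vertical push into a horizontal shift'' is asserted rather than proved, and is not what Lemma~\ref{lyapunov} gives; Lyapunov stability only prevents the orbit from leaving a small tube around a fixed profile, it does not by itself produce a positive shift.

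The paper instead handles (3a) by a three-zone argument that is genuinely necessary. Near $|z|\ge z_0$ it controls the profile difference $\bm{V}^{(2s-1)\tau_0}-\bm{V}^{(2s'-1)\tau_0}$ using $\partial_z\bm{U}(x,z)\to\bm 0$ as $z\to\pm\infty$, producing a Lipschitz constant $\bar\theta$ that can be made small relative to $\rho_*$, $\tau_0/\sigma_0$, $\delta_0$. On the compact zone $[-z_0,z_0]$ it uses compactness of $\omega(\bm{\phi}_0)$ to pick $s_1$ with $\bm{V}^{(3s_1-s_0-1)\tau_0}(0,\cdot)\leq\bm{\phi}$ there. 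Then it bounds $P^{n_{k_0}}(\bm\phi_0)-\bm{V}^{(3s_1-s_0-1)\tau_0}(0,\cdot)\ge -\delta_1\bm{p}(\cdot,\cdot)$ with $\delta_1<\delta_0$ and $\sigma_0\delta_1\le(s_1-s_0)\tau_0$, so that Lemma~\ref{TWS-sub-super} (not Lemma~\ref{lyapunov}) supplies the subsolution whose long-time limit is exactly the shifted profile $\bm{V}^{(2s_1-1)\tau_0}(0,\cdot)$. This is the mechanism that converts a $\bm{p}$-shaped (not uniform) perturbation into a horizontal shift, and it works only because the error is of the same shape as the perturbation built into the sub/supersolutions of Lemma~\ref{TWS-sub-super}. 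You correctly flagged (3a) as the hard step, but attributed the difficulty to non-irreducibility at $\bm 0$ and $\bm 1$; the real issue is that a strict ordering of wave-like profiles on $\Bbb R$ admits no uniform positive separation, so the required comparison must be made against a $\bm{p}$-weighted perturbation rather than against $\eta\bm{e}_0$.
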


\begin{proof}
Recall that $P :\mathcal C_{\bm{1}}\to \mathcal C_{\bm{1}}$ is the Poincar$\acute{e}$ map associated with the periodic
semiflow $Q_t(\cdot)$ generated by \eqref{TWS-t-z}, that is, $P(\bm{u}_0)=Q_T(\bm{u}_0)=\bm{v}(T,\cdot;\bm{u}_0)$,
where $\bm{v}(t,\cdot;\bm{u}_0)$ is the unique solution of \eqref{TWS-t-z} with initial value $\bm{u}_0$.
Then $P^n(\bm{u}_0)=\bm{v}(nT,\cdot;\bm{u}_0)$ for any $n\geq0$.
Since $\bm{u}_0\in\mathcal C_{\bm{1}}$ satisfies \eqref{initial},
Lemma \ref{G1} then yields that for any $\delta\in(0,\delta_0)$, there exist some
$\hat{\tau}>0$ and an integer $\hat{k}>0$ such that
$$\bm{V}^{-\hat{\tau}}(0,z)-\delta\rho_*\bm{1}\leq\bm{v}(\hat{k}T,z;\bm{u}_0)=P^{\hat{k}}(\bm{u}_0)(z)
\leq\bm{V}^{\hat{\tau}}(0,z)+\delta\rho_*\bm{1},\ \ \forall z\in\Bbb{R}.$$
By the comparison principle, for any $t\geq0$ and $z\in\Bbb{R}$, we have
\begin{equation}\label{T}
\begin{aligned}
&\bm{U}(z-ct,z-\hat{\tau}-\sigma_0\delta(1-e^{-\beta_0 t}))-\delta\bm{p}(z-ct,z-\hat{\tau}-\sigma_0\delta(1-e^{-\beta_0 t}))e^{-\beta_0 t}\\
&\leq\bm{v}(t+\hat{k}T,z;\bm{u}_0)\\
&\leq\bm{U}(z-ct,z+\hat{\tau}+\sigma_0\delta(1-e^{-\beta_0 t}))+\delta\bm{p}(z-ct,z+\hat{\tau}+\sigma_0\delta(1-e^{-\beta_0 t}))e^{-\beta_0 t}.
\end{aligned}
\end{equation}
Noting that the sequence $\{P^n(\bm{u}_0)\}_{n\geq1}=\{\bm{v}(nT,\cdot;\bm{u}_0)\}_{n\geq1}$ is bounded in
$C^1(\Bbb{R},\Bbb{R}^2)$ for any $\bm{u}_0\in\mathcal C_{\bm{1}}$ by the standard parabolic estimates,
and $\mathop{\lim}\limits_{z\to-\infty}\bm{V}^{\tau}(t,z)=\bm{0}$,
$\mathop{\lim}\limits_{z\to+\infty}\bm{V}^{\tau}(t,z)=\bm{1}$ uniformly in $t\in\Bbb{R}$ and for any $\tau\in\Bbb{R}$,
it then follows from \eqref{T} that the forward orbit $\gamma^+(\bm{u}_0):=\{P^n(\bm{u}_0)|n\in\Bbb{N}\}$ is precompact
in $\mathcal C$ and its $\omega$-limit set $\omega(\bm{u}_0)$ is thus nonempty, compact and invariant for $P$.
Denote $\tau_0:=\hat{\tau}+\sigma_0\delta$ and let $t=nT$ in \eqref{T}, we see
$$\omega(\bm{u}_0)\subset H:=\left\{\bm{\phi}\in\mathcal C:\bm{V}^{-\tau_0}(0,\cdot)\leq\bm{\phi}(\cdot)\leq\bm{V}^{\tau_0}(0,\cdot)\right\}.$$
For any $s\in[0,1]$, define $\bm{\zeta}(s)=\bm{V}^{(2s-1)\tau_0}(0,\cdot)$, then
$\bm{\zeta}(s)$ is a monotone homeomorphism from $[0,1]$ onto a subset of $\mathcal C$ and $H=\mathcal C_{[\bm{\zeta}(0),\bm{\zeta}(1)]}$,
and we see from Lemma \ref{lyapunov} that each $\bm{\zeta}(s)$ is a stable fined point of $P$ for $s\in[0,1]$.
Furthermore, $\gamma^+(\bm{\phi})\subset H$ is precompact in $\mathcal C$ for any $\bm{\phi}\in H$.

Now we verify the last condition in Lemma \ref{convergence}.
Suppose that $\bm{\zeta}(s_0)<_{\mathcal C}\omega(\bm{\phi}_0)$ for some $s_0\in[0,1)$ and $\bm{\phi}_0\in H$, that is,
$\bm{V}^{(2s_0-1)\tau_0}(0,\cdot)\leq_{\mathcal C}\bm{\phi}(\cdot)$ and $\bm{V}^{(2s_0-1)\tau_0}(0,\cdot)\neq\bm{\phi}(\cdot)$ for any
$\bm{\phi}\in\omega(\bm{\phi}_0)$. The strong maximum principle then yields that
$\bm{V}^{(2s_0-1)\tau_0}(t,z)<\bm{v}(t,z;\bm{\phi})$ for $(t,z)\in(0,\infty)\times\Bbb{R}$, and particularly,
$$\bm{V}^{(2s_0-1)\tau_0}(0,z)=\bm{V}^{(2s_0-1)\tau_0}(T,z)<P(\bm{\phi})(z)=\bm{\phi}(z),\quad\forall z\in\Bbb{R},\ \forall\bm{\phi}\in\omega(\bm{\phi}_0).$$
Noting that $\bm{V}^{\tau}(0,z)=\bm{U}(z,z+\tau)$ and
$\mathop{\lim}\limits_{z\to\pm\infty}\frac{\partial\bm{U}(x,z)}{\partial z}=\bm{0}$ uniformly for $x\in\Bbb{R}$,
there exists some $z_0>0$ large enough such that
$$0<\bar{\theta}:=\mathop{\sup}\limits_{s,s^{\prime}\in[0,3/2],s\neq s^{\prime},|z|\geq z_0}
\frac{|\bm{V}^{(2s-1)\tau_0}(0,z)-\bm{V}^{(2s^{\prime}-1)\tau_0}(0,z)|}{|s-s^{\prime}|}
\leq\frac{2\rho_*}{5}\mathop{\min}\left\{\frac{\tau_0}{\sigma_0},\delta_0\right\},$$
where $\sigma_0$ and $\delta_0$ are defined in Lemma \ref{TWS-sub-super}.
Since $\omega(\bm{\phi}_0)$ is compact, there exists some $s_1\in(s_0,1)$ such that
$$\bm{V}^{(3s_1-s_0-1)\tau_0}(0,z)\leq\bm{\phi}(z),\quad\forall z\in[-z_0,z_0],\ \bm{\phi}\in\omega(\bm{\phi}_0).$$
For any $\bm{\phi}\in\omega(\bm{\phi}_0)$, let $n_k\to+\infty$ be the sequence such that
$P^{n_k}(\bm{\phi}_0)\to\bm{\phi}$ as $k\to+\infty$. Then there exists an integer $n_{k_0}$ such that
$\lVert P^{n_{k_0}}(\bm{\phi}_0)-\bm{\phi}\rVert_{\mathcal C}\leq\bar{\theta}(s_1-s_0)$. Therefore,
\begin{align*}
&P^{n_{k_0}}(\bm{\phi}_0)(z)-\bm{V}^{(3s_1-s_0-1)\tau_0}(0,z)\\
&=P^{n_{k_0}}(\bm{\phi}_0)(z)-\bm{\phi}(z)+\bm{\phi}(z)-\bm{V}^{(3s_1-s_0-1)\tau_0}(0,z)\\
&\geq-\lVert P^{n_{k_0}}(\bm{\phi}_0)-\bm{\phi}\rVert_{\mathcal C}\bm{1}
-\mathop{\sup}\limits_{|z|\geq z_0}|\bm{V}^{(2s_0-1)\tau_0}(0,z)-\bm{V}^{(3s_1-s_0-1)\tau_0}(0,z)|\bm{1}\\
&\geq-\bar{\theta}(s_1-s_0)\bm{1}-\frac{3\bar{\theta}(s_1-s_0)}{2}\bm{1}=-\frac{5\bar{\theta}(s_1-s_0)}{2}\bm{1}\\
&\geq-\delta_1\bm{p}\left(z,z+(3s_1-s_0-1)\tau_0\right),\quad\forall z\in\Bbb{R},
\end{align*}
where $\delta_1:=\frac{5\bar{\theta}(s_1-s_0)}{2\rho_*}<\delta_0$, the first inequality followed by
$\bm{\phi}(z)-\bm{V}^{(3s_1-s_0-1)\tau_0}(0,z)\geq\bm{0}$ for all $z\in[-z_0,z_0]$ and
$\bm{\phi}(z)>\bm{V}^{(2s_0-1)\tau_0}(0,z)$ for all $|z|\geq z_0$ and $\bm{\phi}\in\omega(\bm{\phi}_0)$.
Lemma \ref{TWS-sub-super} then implies that
\begin{align*}
\bm{v}(t,z;P^{n_{k_0}}(\bm{\phi}_0))&\geq\bm{U}(z-ct,z+(3s_1-s_0-1)\tau_0-\sigma_0\delta_1(1-e^{-\beta_0 t}))\\
&\quad-\delta_1\bm{p}(z-ct,z+(3s_1-s_0-1)\tau_0-\sigma_0\delta_1(1-e^{-\beta_0 t}))e^{-\beta_0 t}
\end{align*}
for any $t>0$ and $z\in\Bbb{R}$.
Letting $t=(n_k-n_{k_0})T$ in the above inequality, we have
$$\bm{v}((n_k-n_{k_0})T,\cdot;P^{n_{k_0}}(\bm{\phi}_0))=P^{n_k-n_{k_0}}(P^{n_{k_0}}(\bm{\phi}_0))(\cdot)
=P^{n_k}(\bm{\phi}_0)(\cdot)\to\bm{\phi}\text{~~as~~}k\to\infty.$$
Hence we see from $\sigma_0\delta_1\leq(s_1-s_0)\tau_0$ that
$$\bm{\phi}(z)\geq\bm{U}(z,z+(3s_1-s_0-1)\tau_0-\sigma_0\delta_1)\geq\bm{U}(z,z+(2s_1-1)\tau_0),\quad\forall z\in\Bbb{R},$$
that is, $\omega(\bm{\phi}_0)\geq\bm{V}^{(2s_1-1)\tau_0}(0,\cdot)$ in $\Bbb{R}$.
By Lemma \ref{convergence}, there exists $s_2\in[0,1]$ such that
$$\mathop{\lim}\limits_{n\to\infty}P^n(\bm{u}_0)=\omega(\bm{u}_0)=\bm{\zeta}(s_2)=\bm{V}^{(2s_2-1)\tau_0}(0,\cdot),$$
which leads to
$$\mathop{\lim}\limits_{t\to\infty}\lVert\bm{v}(t,\cdot;\bm{u}_0)-\bm{V}^{(2s_2-1)\tau_0}(0,\cdot)\rVert_{\mathcal C}=0.$$
Letting $\tilde{\tau}=\frac{(2s_2-1)\tau_0}{c}$, there is
$$\mathop{\lim}\limits_{t\to+\infty}\lVert\bm{v}(t,\cdot;\bm{u}_0)-\bm{V}^{c\tilde{\tau}}(t,\cdot)\rVert_{\mathcal C}=0,$$
that is,
$$\mathop{\lim}\limits_{t\to+\infty}\lVert\bm{u}(t,\cdot;\bm{u}_0)-\bm{U}(x,x+ct+c\tilde{\tau})\rVert_{\mathcal C}=0.$$
The proof is complete.
\end{proof}

\begin{corollary}
Let $\bm{U}(x,x+ct)$ and $\tilde{\bm{U}}(x,x+\tilde{c}t)$ be two pulsating fronts of system \eqref{u1u2-0-1}
connecting $\bm{0}$ to $\bm{1}$ with $c, \tilde{c}\neq0$.
Then there exists $\tilde{z}\in\Bbb{R}$ such that
$$\bm{U}(\cdot,\cdot+\tilde{z})=\tilde{\bm{U}}(\cdot,\cdot)\text{~~and~~}c=\tilde{c}.$$
\end{corollary}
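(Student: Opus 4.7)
The plan is to feed the profile of $\tilde{\bm{U}}$ into the Cauchy problem \eqref{u1u2-cauchy} as initial datum, apply the global stability Theorem \ref{GS} with the reference front $\bm{U}$, and then exploit $L$-periodicity in $x$ to extract both the equality of speeds and the shift. Without loss of generality assume $c>0$. Define
$$\bm{u}_0(x) := \tilde{\bm{U}}(x,x), \qquad x \in \Bbb{R}.$$
Since $\tilde{\bm{U}}$ connects $\bm{0}$ to $\bm{1}$ uniformly in the first argument, we have $\bm{u}_0(x) \to \bm{0}$ as $x \to -\infty$ and $\bm{u}_0(x) \to \bm{1}$ as $x \to +\infty$, so $\bm{u}_0 \in \mathcal{C}_{\bm{1}}$ and $\bm{u}_0$ satisfies the asymptotic condition \eqref{initial}. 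By uniqueness for the Cauchy problem, the corresponding solution is $\bm{u}(t,x;\bm{u}_0) = \tilde{\bm{U}}(x, x+\tilde{c} t)$. Theorem \ref{GS} then supplies some $\tilde{\tau} \in \Bbb{R}$ with
\begin{equation}\label{plan-star}
\lim_{t\to+\infty} \sup_{x \in \Bbb{R}} \bigl| \tilde{\bm{U}}(x, x+\tilde{c} t) - \bm{U}(x, x+ct+c\tilde{\tau}) \bigr| = 0.
\end{equation}

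Next I will show $c = \tilde{c}$ by contradiction, assuming $c \neq \tilde{c}$. The idea is to choose a sequence $x_t$ along which the first term in \eqref{plan-star} stays in the strict interior $\langle\bm{0},\bm{1}\rangle$ while the second term is driven to $\bm{0}$ or $\bm{1}$. For each $t>0$, pick $n_t \in \Bbb{Z}$ so that $s_t := n_t L + \tilde{c} t \in [-L,0]$, and extract a subsequence along which $s_t \to s_\infty \in [-L,0]$. Taking $x_t = n_t L$ in \eqref{plan-star} and using the periodicity $\tilde{\bm{U}}(n_t L,\cdot) = \tilde{\bm{U}}(0,\cdot)$, $\bm{U}(n_t L,\cdot) = \bm{U}(0,\cdot)$ gives
$$\bigl| \tilde{\bm{U}}(0, s_t) - \bm{U}(0, s_t + (c-\tilde{c}) t + c\tilde{\tau}) \bigr| \longrightarrow 0.$$
The first term converges to $\tilde{\bm{U}}(0, s_\infty)$, which lies strictly between $\bm{0}$ and $\bm{1}$ by the strict monotonicity of $\tilde{\bm{U}}$ in its second argument (Theorem \ref{main}) together with the limits $\tilde{\bm{U}}(x, \pm\infty) \in \{\bm{0},\bm{1}\}$. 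But the second term tends to $\bm{0}$ or $\bm{1}$, since $(c-\tilde{c})t \to \pm\infty$ and $\bm{U}(x,z) \to \bm{0}, \bm{1}$ as $z \to \mp\infty$ uniformly in $x$. This contradiction forces $c = \tilde{c}$.

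With $c = \tilde{c}$, relation \eqref{plan-star} becomes $\sup_x |\tilde{\bm{U}}(x, x+ct) - \bm{U}(x, x+ct+c\tilde{\tau})| \to 0$ as $t\to+\infty$. To upgrade this to pointwise equality, fix any $(y,z) \in \Bbb{R}^2$ and for each integer $n$ set $y_n := y + nL$ and $t_n := (z - y_n)/c$, so that $y_n + c t_n = z$ and $t_n \to +\infty$ as $n \to -\infty$ (recall $c>0$). By $L$-periodicity, $\tilde{\bm{U}}(y_n, z) = \tilde{\bm{U}}(y, z)$ and $\bm{U}(y_n, z + c\tilde{\tau}) = \bm{U}(y, z + c\tilde{\tau})$; therefore
$$\bigl|\tilde{\bm{U}}(y, z) - \bm{U}(y, z+c\tilde{\tau})\bigr| \leq \sup_{x\in\Bbb{R}}\bigl|\tilde{\bm{U}}(x, x+ct_n) - \bm{U}(x, x+ct_n+c\tilde{\tau})\bigr| \longrightarrow 0.$$
Hence $\tilde{\bm{U}}(y, z) = \bm{U}(y, z + c\tilde{\tau})$ for every $(y,z) \in \Bbb{R}^2$, and setting $\tilde{z} := c\tilde{\tau}$ finishes the proof.

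I expect the main obstacle to be the contradiction step that establishes $c = \tilde{c}$: the naive evaluation $x_t \equiv 0$ collapses \eqref{plan-star} to statements about $\tilde{\bm{U}}(0,\tilde{c}t)$ and $\bm{U}(0, ct+c\tilde{\tau})$, both of which tend to the same boundary state in $\{\bm{0},\bm{1}\}$, so no contradiction emerges. The essential device is to evaluate along the $L$-lattice $\{nL:n\in\Bbb{Z}\}$, which collapses the first argument modulo periodicity and pins $\tilde{\bm{U}}(0,\cdot)$ to a nontrivial compact subset of $\langle\bm{0},\bm{1}\rangle$, while the residual drift $(c-\tilde{c})t$ drags the second argument of $\bm{U}$ to infinity.
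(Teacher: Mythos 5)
Your proof is correct and follows essentially the same strategy as the paper: feed $\tilde{\bm{U}}(\cdot,\cdot)$ into the Cauchy problem, invoke Theorem~\ref{GS} to obtain \eqref{plan-star}, and then exploit $L$-periodicity together with the strict interiority of the front profile to force first $c=\tilde{c}$ and then the translational identity. The only cosmetic difference is which front you pin at a finite argument and which one you drag to $\pm\infty$ (you pin $\tilde{\bm{U}}$ and drag $\bm{U}$ by sliding $x$ along the lattice $n_tL$; the paper pins $\bm{U}$ and drags $\tilde{\bm{U}}$ by taking $t=nT=nL/c$), and that you obtain the pointwise identity $\tilde{\bm{U}}(y,z)=\bm{U}(y,z+c\tilde{\tau})$ by a direct periodicity-and-limit argument rather than by appealing to uniqueness of the Cauchy problem.
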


\begin{proof}
Let $\bm{u}(t,x):=\bm{U}(x,x+ct)$ and $\tilde{\bm{u}}(t,x):=\tilde{\bm{U}}(x,x+\tilde{c}t)$.
In view of $\tilde{\bm{u}}(0,\cdot)\in\mathcal C_{\bm{1}}$,
$\mathop{\lim}\limits_{x\to-\infty}\tilde{\bm{u}}(0,x)=\bm{0}$ and
$\mathop{\lim}\limits_{x\to+\infty}\tilde{\bm{u}}(0,x)=\bm{1}$,
it follows that $\tilde{\bm{u}}(0,\cdot)$ satisfies \eqref{initial}.
The above Theorem \ref{GS} then yields that there exists some $\tilde{\tau}\in\Bbb{R}$ such that
$$\mathop{\lim}\limits_{t\to+\infty}\lVert\tilde{\bm{u}}(t,x)-\bm{U}(x,x+ct+c\tilde{\tau})\rVert_{\mathcal C}=0,$$
that is,
$$\mathop{\lim}\limits_{t\to+\infty}\lVert\tilde{\bm{U}}\left(z-ct,z+(\tilde{c}-c)t\right)-\bm{U}(z-ct,z+c\tilde{\tau})\rVert_{\mathcal C}=0.$$
Letting $t=nT$, then
\begin{equation}\label{U}
\mathop{\lim}\limits_{n\to+\infty}\tilde{\bm{U}}\left(z,z+(\tilde{c}-c)nT\right)=\bm{U}(z,z+c\tilde{\tau}),
\quad\text{uniformly in}\ z\in\Bbb{R}.
\end{equation}
If $\tilde{c}\neq c$, then for any fixed $z\in\Bbb{R}$,
$\mathop{\lim}\limits_{n\to+\infty}\tilde{\bm{U}}\left(z,z+(\tilde{c}-c)nT\right)=\bm{0}$ or $\bm{1}$,
which contradicts to \eqref{U}, and hence $\tilde{c}=c$. Then, we see from \eqref{U} that
$\tilde{\bm{U}}(z,z)=\bm{U}(z,z+c\tilde{\tau})$ for all $z\in\Bbb{R}$. That is,
$\bm{u}(\tilde{\tau},\cdot)=\tilde{\bm{u}}(0,\cdot)$ in $\Bbb{R}$, which implies that
$\bm{u}(t+\tilde{\tau},\cdot)=\tilde{\bm{u}}(t,\cdot)$ for any $t\in\Bbb{R}$,
namely, there exists $\tilde{z}:=c\tilde{\tau}$ such that
$\bm{U}(\cdot,\cdot+\tilde{z})=\tilde{\bm{U}}(\cdot,\cdot)$ in $\Bbb{R}^2$.
The proof is complete.
\end{proof}

\section*{Acknowledgments}
The first author was partially supported by FRFCU (lzujbky-2017-it59), the second author was partially supported by NSF of China (11671180, 11731005) and FRFCU (lzujbky-2017-ct01), and the third author partially supported by NSF of China (11301407) and  NSF of Shaanxi Province of China (2017JM1003).

\end{document}